\newtheorem{thm}{Theorem}[section]
\newtheorem{lem}[thm]{Lemma}
\newtheorem{prop}[thm]{Proposition}
\newtheorem{cor}[thm]{Corollary}
\newtheorem{defn}[thm]{Definition}
\newtheorem{example}[thm]{Example}
\numberwithin{equation}{section}
\DeclareMathOperator{\Hom}{Hom}
\DeclareMathOperator{\End}{End}
\DeclareMathOperator{\Ext}{Ext}
\DeclareMathOperator{\D}{D}
\DeclareMathOperator{\Tr}{Tr}
\DeclareMathOperator{\rad}{\rm rad}
\DeclareMathOperator{\soc}{\rm soc}
\newcommand{\pmodcat}[1]{#1\mbox{{\rm -proj}}}
\DeclareMathOperator{\idim}{injdim}
\DeclareMathOperator{\gdim}{gldim}
\DeclareMathOperator{\domdim}{domdim}
\DeclareMathOperator{\lmod}{\rm{-mod}}
\DeclareMathOperator{\add}{\rm{add}}
\DeclareMathOperator{\evd}{\rm{rd}}
\DeclareMathOperator{\rep}{repdim}
\DeclareMathOperator{\cf}{rigdim}
\DeclareMathOperator{\stmod}{\mbox{{\rm -{\underline{mod}}}}}
\newcommand{\lra}{\longrightarrow}
\newcommand{\lraf}[1]{\stackrel{#1}{\lra}}
\newenvironment{proof}{\noindent \textit{Proof.}\,}{\hfill$\square$ \vskip5pt}
\begin{document}

\title{Rigidity dimension - a homological dimension measuring resolutions of
  algebras by algebras of finite global dimension}
\author{Hongxing Chen\footnote{Corresponding author}, Ming Fang, Otto Kerner,\\
 Steffen Koenig and Kunio Yamagata}
 \date{}
 \maketitle

 \begin{abstract}
   A new homological dimension is introduced to measure the quality of
   resolutions of `singular' finite dimensional algebras (of infinite
   global dimension) by `regular' ones (of finite global dimension).
   Upper bounds are established in terms of extensions and of Hochschild
   cohomology, and finiteness in general is derived from homological
   conjectures. Then invariance under
   stable equivalences is shown to hold, with some exceptions when there
   are nodes in case of additive equivalences, and without exceptions in
   case of triangulated equivalences.
   Stable equivalences of Morita type and derived equivalences,
   both between self-injective algebras, are shown to
   preserve rigidity dimension as well.
 \end{abstract}

{\footnotesize\tableofcontents\label{contents}}

\section{Introduction}

A finite dimensional algebra $E$ may be called `regular' if it has finite
global dimension, that is, cohomology $\Ext_E^n(X,Y)$ between $E$-modules
vanishes from a certain degree $N_0$ on.
Given a `singular' algebra $A$, of infinite global
dimension, `resolving' it by $E$ means, in the weakest sense, that there
exists an idempotent $e=e^2 \in E$ such that $A = eEe$. By a construction
due to Auslander, this is always possible. When $A$ is
self-injective this implies that $eE$ is a generator-cogenerator in $E\lmod$;
in general one may require that. There always is a double centraliser property
between $E$ and $A=eEe$ on the module $eE$. Algebras $E$ `resolving' $A$ then
are the endomorphism rings of generator-cogenerators over $A$. Among the
algebras $E$ one has to single out those with finite global dimension.
In this sense, a classical or quantised Schur algebra is a resolution of
the group algebra of a symmetric group or a Hecke algebra, respectively.
In Rouquier's terminology \cite{Ro08},
the Schur algebra is a `quasi-hereditary' cover, where quasi-hereditary
refers to the additional structure of its module category being a highest
weight category.

An appropriate measure of the `quality' of the resolution is the
dominant dimension of $E$, as has been demonstrated in \cite{FK,FK14, F14,FM}
in the case of Schur algebras and more generally of gendo-symmetric
algebras. The choice of dominant dimension to measure
how close $E$ and $A$ are related (the closer, the larger the dominant
dimension of $E$ is) is compatible with Rouquier's setup,
and it reflects the role
dominant dimension is playing in Morita-Tachikawa correspondence, Auslander's
correspondence, Auslander's representation dimension and Iyama's higher
representation dimension \cite{Iyama07}.
There also are close relations to Iyama's maximal
orthogonal modules \cite{Iyama07b} and to Iyama and Wemyss' cluster tilting
theory for commutative algebras \cite{IW}.
\medskip

The aim of this article is to introduce a new
homological dimension, which provides
information on the `best possible' resolution of a fixed algebra $A$:

\[
\cf(A) := \sup\Big\{\domdim \End_A(M) \, \, \Big | \, \, \parbox{7cm}{$M$ is a
  generator-cogenerator in $A\lmod$
    and $\gdim\End_A(M)<\infty$}   \Big\}.
\]

By Theorem \ref{thm:Muller characterization} (M\"uller), when $M$ is a
generator-cogenerator in $A\lmod$, the dominant dimension $\domdim \End_A(M)$
of its endomorphism ring is determined by the rigidity degree
of $M$ (Definition \ref{EVD}),
which measures vanishing of self-extensions of $M$. Therefore, the
new dimension is called {\bf rigidity dimension} of $A$.
Note that non-semisimple algebras of
finite global dimension always have finite dominant dimension,
so the supremum is taken over a set of natural numbers, unless $A$ is
semisimple.

When $A$ has infinite global dimension, $\cf(A)$ measures how close $A$ can come
to an endomorphism ring $E=\End_A(M)$ of a generator-cogenerator $M$, with
$E$ having finite global dimension.
Semisimple algebras obviously have infinite
rigidity dimension. When $A$ is not self-injective, $\cf(A)$ still is defined,
but its meaning is less clear. We will see that always $\cf(A) \geq 2$.

A combination of global and dominant dimension also occurs in the definitions
of Auslander's representation dimension, which is always finite, and Iyama's
higher representation dimension, which is often infinite. It turns out that
rigidity dimension controls finiteness of higher representation dimension:
$\rep_n(A)$ is finite if and only if $\cf(A) \geq n+1$. Thus, rigidity dimension
can be viewed as a companion of higher representation dimension.

We will address two basic questions about this new dimension: {\em finiteness}
and {\em invariance under equivalences}.

Three approaches are developed to establish finiteness. The first approach
provides an upper bound for $\cf(A)$ in terms of the smallest degree
$n \geq 1$, if existent, for which $\Ext_A^n(\D(A),A)$ does not vanish (Theorem
\ref{injective}).
Here $\D$ is the usual $k$-duality over the ground field $k$.
This can be applied to maximal orthogonal modules and to
gendo-symmetric algebras, including Schur algebras of algebraic groups and
blocks of the Bernstein-Gelfand-Gelfand category $\mathcal O$ of semisimple
complex Lie algebras. For the latter algebras, this method implies
that further `resolving' a gendo-symmetric algebra $E$ of finite global
dimension, for instance a Schur algebra $S(r,r)$, which resolves a symmetric
algebra $A$ like  the group algebra $k \Sigma_r$ of a symmetric group,
cannot produce a better resolution of $A$, with respect to $\cf$.

The second approach provides an upper bound in terms of the
smallest positive degree of a non-nilpotent homogenous generator
of Hochschild cohomology (Theorem
\ref{thm:cf dim and Hochschild cohomology ring}). This implies finiteness
of rigidity dimension for all non-semisimple group algebras (Theorem
\ref{cor:cf dim is finite for group algebra}). Symonds' proof of
Benson's regularity conjecture then implies that the order of the group is
an explicit, but weak, upper bound for $\cf(kG)$.

The third approach derives finiteness of rigidity dimension
for all non-semisimple algebras from homological conjectures (Theorem
\ref{thm:cf finite from conjectures}): Assuming
Tachikawa's first conjecture yields finiteness for algebras that are
not self-injective, as an application of the first approach.
Finiteness in general is shown to follow from Yamagata's conjecture.
These conjectures are in general open;
for finite-dimensional algebras, no counterexamples are known. Tachikawa's
first conjecture is part of a reformulation of Nakayama's conjecture.
\medskip

Morita equivalences do preserve rigidity dimension, which is shown also
to be invariant under stable equivalences provided the algebras involved do
not have nodes (Theorem
\ref{thm:rigidity dimension is invariant under stable equivalence}).
In the presence of nodes (which does not happen frequently),
rigidity dimension can change, as we show by examples. Algebras without nodes
always have minimal rigidity dimension in their stable equivalence class
(Theorem \ref{thm:rigidity dimension is invariant under stable equivalence}).
Non-invariance disappears
when the equivalences preserve the triangulated structure which stable
categories of self-injective algebras are known to have (Theorem
\ref{Stable equivalences of self-injective algebras}).

Stable equivalences of Morita type, and thus also derived equivalences, between
self-injective algebras do leave rigidity dimension invariant. More precisely,
stable equivalences of adjoint type are shown always to preserve
rigidity dimension, for all finite dimensional algebras (Theorem
\ref{thm:rigidity dimension is invariant under stable equivalences of adjoint type}).

\medskip

The organisation of this article is as follows:

Section \ref{sec:rigidity dimension, introduction}
starts by recalling some known
material and then gives the definition of rigidity dimension, some basic
properties, and connections with higher representation dimension, maximal
orthogonal modules and weakly Calabi-Yau properties. Section
\ref{sec:finiteness} provides three approaches to proving finiteness of
rigidity dimension; first, an upper bound in terms of vanishing of extensions
between injective and projective modules, then an upper bound in terms of
degrees of generators of reduced Hochschild cohomology, and finally a
general proof of finiteness, for non-semisimple algebras, based on assuming
validity of homological conjectures. In Section
\ref{sec:stable invariant}, rigidity dimension is shown to be invariant
under stable equivalences between algebras without nodes. More generally,
algebras without nodes are shown to have minimal rigidity dimension in their
stable equivalence class. The inequalities may be proper, as we illustrate
by examples. Stronger results are shown for self-injective algebras; in
particular, invariance of rigidity dimension holds
when equivalences are required to preserve the triangulated structure.
In Section \ref{sec:stableqMorita}, stable
equivalences of adjoint type are shown to preserve rigidity dimension; this
implies invariance of rigidity dimension under stable equivalences of Morita
type between self-injective algebras, and hence also under derived equivalences
between self-injective algebras.
\smallskip

In the subsequent article \cite{CFKKY2}, rigidity dimensions of classes of
examples will be determined.
\bigskip

{\bf Notation.} Throughout this paper, $k$ is an arbitrary but fixed
field. Unless stated otherwise, all algebras are finite-dimensional
associative $k$-algebras with unit, and all modules are finite-dimensional
left modules. The set of positive integers is denoted by $\mathbb{N}$; the set
of non-negative integers is denoted by $\mathbb{N}_0$.

 Let $A$ be an algebra. By $A\lmod$, we denote the category of all
left $A$-modules.
 The syzygy and cosyzygy operators of $A\lmod$ are denoted by $\Omega_A$ and $\Omega_A^{-}$, respectively. Let $A^{\rm {op}}$ be the opposite algebra of $A$. Then $\D:=\Hom_k(-,k)$ is a duality between $A\lmod$ and $A^{\rm {op}}\lmod$.

 Let $\mathcal{X}$ be a class of $A$-modules. By $\add(\mathcal{X})$, we denote the smallest full subcategory of $A\lmod$ which contains $\mathcal{X}$ and is closed under finite direct sums and direct summands. When $\mathcal{X}$ consists of only one object $X$, we write $\add(X)$ for $\add(\mathcal{X})$. In particular, $\add({_A}A)$ is exactly the category of projective $A$-modules and also denoted by $A\pmodcat$.  Let $\mathscr{P}_A$ and $\mathscr{I}_A$ stand for the set of isomorphism classes of indecomposable projective and injective $A$-modules, respectively.

 Let $n$ be a natural number. We denote by $\mathcal{X}^{\perp_n}$ (respectively, ${^{\perp_n}}\!\mathcal{X}$) the full subcategory of $A\lmod$ consisting of modules $Y$ such that $\Ext_A^i(X,Y)=0$ (respectively, $\Ext_A^i(Y,X)=0$) for $1\leq i\leq n$ and $X\in\mathcal{X}$. Similarly, we write $X^{\perp_n}$ for $\mathcal{X}^{\perp_n}$ whenever $\mathcal{X}=\{X\}$.

 The module $M$ is called \emph{basic} if every indecomposable direct summand of $M$ is multiplicity-free. Moreover, the head, radical and socle of $M$ are denoted by $\mathrm{hd}(M)$, $\rad(M)$ and $\soc(M)$, respectively.
The composition of two morphisms $f: X \to Y$ and
$g: Y\to Z$ in $A\lmod$ is denoted by $fg: X \to Z$.
In this sense, $\Hom_A(X,Y)$ is an $\End_A(X)$-$\End_A(Y)$-bimodule.

\section{Rigidity dimension}\label{sec:rigidity dimension, introduction}

The main object of study in this article, rigidity dimension,
will be introduced in the
second subsection. Before, we recall the definitions of global and dominant
dimension and define the rigidity degree of a module.
The third subsection then
provides basic properties and examples. This includes the connection with
Iyama's higher representation dimension, the relation with maximal
orthogonal modules, which can be used to provide lower bounds for
rigidity dimension, and finally a result on rigidity dimension of weakly Calabi-Yau
self-injective algebras, which implies that preprojective algebras of
Dynkin type have rigidity dimension exactly three.

\subsection{Global and dominant dimension, and rigidity}
\label{subsec:elements on domdim and gldim}

 An $A$-module $M$ is called a \emph{generator} if $A \in \add(M)$,
 a \emph{cogenerator} if $\D(A_A)\in \add(M)$, a \emph{generator-cogenerator}
 if it is both a generator and a cogenerator.

 Dominant dimension was introduced by Nakayama and later systematically
 study by Tachikawa, Morita, M\"{u}ller, Yamagata and many others, see
 \cite{Mu68,Tac73,Y96}. See also \cite{F14,FK,FK11, FK14,KY1,KY2}
 for some recent developments partly motivating the present article.

 \begin{defn}\label{defn:dominant dimension}
   The {\em dominant dimension} of $A$, denoted by
   $\domdim A$, is the largest $t \in {\mathbb N}_0$,
or $\infty$, such that in a minimal injective resolution
   $$0\lra {_A}A\lra I^0\lra I^1\lra\cdots\lra I^{t-1}\lra I^t\lra\cdots$$
   all $I^i$ are projective for $0\leq i < t$.
 \end{defn}
 Note that $\domdim A = \domdim A^{op}$ (see \cite[Theorem 4]{Mu68}).
 If $\domdim A\geq 1$, then the injective envelope of ${_A}A$ is
 faithful and projective. If $\domdim A\geq 2$, then any faithful
 projective-injective
 $A$-module $P$ has the \emph{double centralizer property},
 that is,  with $\Lambda=\End_A(P)$  there is an equality
 $A\cong \End_{\Lambda^{\rm{op}}}(P)^{\rm {op}}$.
In this case, also $A\cong\End_{\Lambda}(\D(P)) $, and $\D(P)$ is a
generator-cogenerator in $\Lambda\lmod$.
In  general, for calculating dominant dimensions of endomorphism
algebras,  the following definition is useful.

 \begin{defn}\label{EVD}
   Let $M$ be an $A$-module. The \emph{rigidity degree} of $M$,
   denoted by $\evd(_AM)$, is the maximal $n \in {\mathbb N}_0$, or $\infty$,
     such that
   $\Ext_A^{i}(M, M)$ vanishes for all $1 \leq i \leq n$. \\
   In other words, \mbox{$\evd(_AM)\geq n$}
   if and only if $ \Ext^i_A(M,M)=0 $ for all
   $1\leq i\leq n$.
\end{defn}

 In this article, rigidity degrees of modules, measuring the vanishing of
 self-extensions, are of particular
 interest, due to their connections with both dominant dimensions and
 Hochschild cohomology rings.
The connection with dominant dimension is provided by a result due to M\"uller:

 \begin{thm} \textrm{\bf(M\"uller \cite[Lemma 3] {Mu68})}
 \label{thm:Muller characterization}
 Let $A$ be an algebra and $M$ a generator-cogenerator in
 $A\lmod$. Then
 $\domdim \End_A(M)=\evd(_AM)+2$.
 \end{thm}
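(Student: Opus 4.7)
The plan is to establish the two-way equivalence $\Ext^i_A(M, M) = 0$ for $1 \le i \le n$ $\iff$ $\domdim E \ge n + 2$ (where $E := \End_A(M)$), from which the stated equality follows. The key technical ingredient, which I would set up first, is the classification of projective-injective $E$-modules: $J \in E\lmod$ is projective-injective iff $J \cong \Hom_A(M, N)$ for some injective $A$-module $N$, i.e.\ $N \in \add(\D(A)) \subseteq \add(M)$. Projectivity is immediate from the equivalence $\Hom_A(M, -) \colon \add(M) \to \add(E)$. For injectivity, writing $N = \D(N')$ with $N' \in \add(A_A)$ gives $\Hom_A(M, N) \cong \D(N' \otimes_A M)$; since $A \in \add(M)$ makes $M$ projective as a right $E$-module, $N' \otimes_A M$ is projective as a right $E$-module, whose $k$-dual is injective as a left $E$-module.

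For the ``$\Leftarrow$'' direction, assume $\Ext^i_A(M, M) = 0$ for $1 \le i \le n$ and take a minimal injective co-resolution $0 \to M \to I^0 \to I^1 \to \cdots$ in $A\lmod$. Each $I^j$ lies in $\add(\D(A))$ because $M$ is a cogenerator, so applying $\Hom_A(M, -)$ produces a complex $0 \to E \to J^0 \to J^1 \to \cdots$ of projective-injective $E$-modules. Its cohomology at $J^j$ equals $\Ext^j_A(M, M)$ for $j \ge 1$ and vanishes at $E$ and $J^0$ by left exactness, so the hypothesis forces exactness through $J^n$ and yields an exact sequence $0 \to E \to J^0 \to \cdots \to J^n \to B \to 0$ with $B = \im(J^n \to J^{n+1}) \hookrightarrow J^{n+1}$. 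The crucial off-by-two observation is this: since $J^{n+1}$ is injective, the injective envelope of $B$ is a summand of $J^{n+1}$ and therefore projective-injective; splicing this envelope in after $J^n$ gives an injective resolution of $E$ whose first $n + 2$ (rather than merely $n + 1$) terms are projective-injective, yielding $\domdim E \ge n + 2$.

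For the ``$\Rightarrow$'' direction, assume $\domdim E \ge n + 2$, so the first $n + 2$ terms $J^0, \ldots, J^{n+1}$ of the minimal injective resolution of $E$ are projective-injective. By the classification, $J^j = \Hom_A(M, N^j)$ with $N^j \in \add(\D(A))$, and fully faithfulness of $\Hom_A(M, -)$ on $\add(M)$ lifts the differentials to a sequence $M \to N^0 \to N^1 \to \cdots$ in $A\lmod$. Apply the exact Schur functor $\Phi := e \cdot - \colon E\lmod \to A\lmod$, where $e \in E$ is the idempotent for the summand $A$ of $M$; one checks that $\Phi(E) = M$ and $\Phi(\Hom_A(M, N)) = N$ for $N \in \add(M)$. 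Exactness of the injective resolution of $E$ thus transfers under $\Phi$ to an exact sequence $0 \to M \to N^0 \to \cdots \to N^{n+1} \to \cdots$ in $A\lmod$, i.e.\ a partial injective co-resolution of $M$. Computing $\Ext^i_A(M, M)$ from this resolution yields $\Ext^i_A(M, M) = H^i(\Hom_A(M, N^\bullet)) = H^i(J^\bullet) = 0$ for $1 \le i \le n$, using that $J^\bullet$ augmented by $E$ is exact.
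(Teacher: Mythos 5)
The paper itself gives no proof of this statement --- it cites M\"uller's original lemma --- so there is no internal argument to compare against. Your strategy (classify projective--injective $E$-modules via the equivalence $\Hom_A(M,-)\colon \add(M)\to\pmodcat{E}$, then match up the minimal injective resolution of $E$ with $\Hom_A(M,-)$ applied to a minimal injective coresolution of $M$) is the standard route, and your bookkeeping of the ``$+2$'' is right: in the $\Leftarrow$ direction the extra projective--injective term is obtained from the injective envelope of the image $B\hookrightarrow J^{n+1}=\Hom_A(M,I^{n+1})$, which is a summand of $J^{n+1}$ and hence itself projective--injective because $I^{n+1}\in\add(\D(A))\subseteq\add(M)$.

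There is, however, a genuine gap in the classification step. You assert that $J$ is projective--injective \emph{iff} $J\cong\Hom_A(M,N)$ with $N$ injective, but you only prove the ``if'' direction ($N$ injective $\Rightarrow$ $\Hom_A(M,N)$ projective--injective). The $\Leftarrow$ direction of M\"uller's theorem uses only this, so it is fine; but in the $\Rightarrow$ direction you explicitly invoke the converse to write $J^j=\Hom_A(M,N^j)$ with $N^j\in\add(\D(A))$, i.e.\ you need: if $X\in\add(M)$ and $\Hom_A(M,X)$ is injective over $E$, then $X$ is injective over $A$. This cannot be skipped. Without knowing $N^j$ is injective, the exact sequence $0\to M\to N^0\to\cdots$ obtained by applying $\Phi=e\,(-)$ is merely an exact $\add(M)$-coresolution, and the identification $H^i\bigl(\Hom_A(M,N^\bullet)\bigr)=\Ext^i_A(M,M)$ requires $N^\bullet$ to be injective (or $\Ext^{\ge 1}_A(M,N^j)=0$, which is circular here). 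Also note that $\Phi\cong\Hom_E(Ee,-)$ does not in general preserve injectives, so that alternative does not rescue the step either. The missing implication is short to supply: let $\iota\colon X\hookrightarrow I$ be the injective envelope (so $I\in\add(\D(A))\subseteq\add(M)$); applying $\Hom_A(M,-)$ gives an embedding of the injective module $\Hom_A(M,X)$ into $\Hom_A(M,I)$, which splits; both are projective $E$-modules, so the equivalence on $\add(M)$ forces $X$ to be a direct summand of $I$, hence injective. With this inserted, your proof is complete.
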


\subsection{Definition of rigidity dimension}

Both Rouquier's theory of quasi-hereditary covers \cite{Ro08} and the
theory of non-commutative crepant resolutions due to Van den Bergh,
Iyama and Wemyss, see for instance \cite{Iyama07}, `resolve'
algebras of infinite global dimension by algebras of finite global dimension.
The quality of such a resolution can be measured by the dominant dimension
of the resolving algebra or by the rigidity degree of the
generator-cogenerator over the algebra being resolved. The homological
dimension to be defined now, aims at measuring the quality of the best
possible such resolution.

  \begin{defn} \label{defn:rigidity dimension}
   The rigidity dimension of an algebra $A$ is defined to be
   \[
    \cf(A) = \sup\Big\{\domdim \End_A(M)\, \Big | \, \parbox{7cm}{$M$ is a generator-cogenerator in $A\lmod$
    and $\gdim\End_A(M)<\infty$}   \Big\}.
   \]
 \end{defn}
 The construction in the proof of Iyama's finiteness theorem on
 representation dimension, \cite[Lemma 2.2]{Iyama03},
 ensures the existence of a generator-cogenerator $M$ in $A\lmod$
 with $\End_A(M)$ being quasi-hereditary and hence
 $\gdim\End_A(M)<\infty$. Thus, the supremum is taken over a non-empty set.

 The rigidity dimension of a semisimple algebra
 is always $\infty$, as the dominant dimension of semisimple algebras
 is $\infty$. In Section \ref{sec:finiteness},
 we will give criteria to check
 finiteness of rigidity dimension for many non-semisimple algebras, and we will
 prove finiteness
 for all non-semisimple algebras after assuming Tachikawa's conjectures
 and Yamagata's conjecture.

 Morita-Tachikawa correspondence includes the statement that endomorphism
 rings of generator-cogenerators have dominant dimension at least two.
 Thus, values of $\cf$ always are at least two.

 \begin{cor} \label{cor:lessequaltwo}
For any algebra $A$, $\cf A \geq 2$.
 \end{cor}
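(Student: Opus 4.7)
The plan is to combine the two facts recalled immediately before the corollary: the supremum defining $\cf(A)$ is taken over a non-empty set, and every element of that set is at least $2$. More precisely, I would proceed as follows.

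First, I would invoke the construction from Iyama's finiteness theorem for representation dimension (cited as \cite[Lemma 2.2]{Iyama03} in the excerpt) to produce at least one generator-cogenerator $M \in A\lmod$ such that $\End_A(M)$ is quasi-hereditary; in particular $\gdim \End_A(M) < \infty$. This guarantees that the set indexing the supremum in Definition \ref{defn:rigidity dimension} is non-empty, so that $\cf(A)$ is a well-defined element of $\mathbb{N}_0 \cup \{\infty\}$ rather than the supremum of the empty set.

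Next, I would apply M\"uller's theorem (Theorem \ref{thm:Muller characterization}) to this $M$: since $M$ is a generator-cogenerator, $\domdim \End_A(M) = \evd(_AM) + 2$. As rigidity degrees are by definition elements of $\mathbb{N}_0 \cup \{\infty\}$ (in particular non-negative), one obtains $\domdim \End_A(M) \geq 2$. Taking the supremum over all admissible $M$ then yields $\cf(A) \geq \domdim \End_A(M) \geq 2$.

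There is really no obstacle here: the statement is a direct consequence of the two ingredients placed just before it in the text, namely Iyama's existence result guaranteeing non-emptiness and M\"uller's formula giving the lower bound $\domdim \End_A(M) \geq 2$ (equivalently, the Morita--Tachikawa correspondence, which says that endomorphism rings of generator-cogenerators always have dominant dimension at least two). The only thing to double-check is that Iyama's construction indeed produces $M$ that is simultaneously a generator and a cogenerator with $\End_A(M)$ of finite global dimension, which is precisely the content of the cited lemma.
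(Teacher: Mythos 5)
Your proof is correct and follows essentially the same route as the paper: the paper records the bound as a consequence of the Morita--Tachikawa correspondence (endomorphism rings of generator-cogenerators have dominant dimension $\geq 2$) and then immediately notes that this is equivalent to the reformulation via M\"uller's Theorem \ref{thm:Muller characterization}, which is exactly the path you take; the non-emptiness via Iyama's construction is likewise the paper's own justification that the supremum is over a non-empty set.
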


 This also follows from a reformulation of the definition, using M\"uller's
 Theorem \ref{thm:Muller characterization}:
 \[
 \cf(A) = \sup\Big\{\evd(_AM)\Big | \parbox{7cm}{$M$ is a
   generator-cogenerator in $A\lmod$ and $\gdim\End_A(M)<\infty$}   \Big\}+2.
   \]

\subsection{Basic properties, examples and connections}

 \begin{prop}\label{prop:rigidity dimension blockwise}
 Let $A$ and $B$ be algebras. Then
 \begin{itemize}
 \item[\rm(1)] $\cf(A)=\cf(A^{\rm{op}})$ and $\cf(A\times B) = \min\{\cf(A),
               \cf(B)\}$.
 \item[\rm(2)] If $A$ and $B$ are Morita equivalent, then $\cf(A)=\cf(B)$.
 \item[\rm(3)] If $k$ is perfect, then $\cf(A\otimes_kB)\geq \min\{\cf(A), \cf(B)\}.$
 \end{itemize}
 \end{prop}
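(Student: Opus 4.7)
The plan is to use the reformulation $\cf(A)=\sup\{\evd(_AM)\mid M\text{ generator-cogenerator of }A,\ \gdim\End_A(M)<\infty\}+2$ recorded just after Corollary \ref{cor:lessequaltwo}, and transport generator-cogenerators across each construction. For $\cf(A)=\cf(A^{\rm op})$, I would use the $k$-duality $\D$, which exchanges indecomposable projectives and injectives and thus induces a bijection between (isomorphism classes of) generator-cogenerators of $A$ and of $A^{\rm op}$; the canonical algebra isomorphism $\End_{A^{\rm op}}(\D M)\cong\End_A(M)^{\rm op}$ preserves global dimension, and preserves dominant dimension via $\domdim E=\domdim E^{\rm op}$ (Müller, cited just after Definition \ref{defn:dominant dimension}).

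For $\cf(A\times B)$, the observation is that every generator-cogenerator of $A\times B$ decomposes as $M_A\oplus M_B$ with $M_A$ (resp.\ $M_B$) a generator-cogenerator of $A$ (resp.\ $B$), and conversely a direct sum of such is a generator-cogenerator of $A\times B$ with $\End_{A\times B}(M_A\oplus M_B)=\End_A(M_A)\times\End_B(M_B)$. Combined with $\gdim(C\times D)=\max\{\gdim C,\gdim D\}$ and $\domdim(C\times D)=\min\{\domdim C,\domdim D\}$, this yields $\cf(A\times B)=\min\{\cf(A),\cf(B)\}$. Part (2) is then immediate: a Morita equivalence $F$ is additive and preserves projectives and injectives, hence sends generator-cogenerators to generator-cogenerators and satisfies $\End_A(M)\cong\End_B(FM)$ as algebras.

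For part (3), I would take generator-cogenerators $M$ of $A$ and $N$ of $B$ with $\gdim\End_A(M),\gdim\End_B(N)<\infty$, and observe that $M\otimes_k N$ is a generator-cogenerator of $A\otimes_k B$ because both $A\otimes B$ and $\D(A\otimes B)\cong\D A\otimes\D B$ are summands of it. The natural algebra isomorphism $\End_{A\otimes B}(M\otimes N)\cong\End_A(M)\otimes_k\End_B(N)$, combined with the Künneth identification over the field $k$,
\[
\Ext^i_{A\otimes B}(M\otimes N,M\otimes N)\cong\bigoplus_{j+l=i}\Ext^j_A(M,M)\otimes_k\Ext^l_B(N,N),
\]
gives $\evd(M\otimes N)\ge\min\{\evd(M),\evd(N)\}$: for $1\le i\le\min\{\evd(M),\evd(N)\}$, any pair $(j,l)$ with $j+l=i$ satisfies either $1\le j\le\evd(M)$ or $1\le l\le\evd(N)$, so at least one Ext factor in each summand vanishes. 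The main obstacle is to ensure that $\End_A(M)\otimes_k\End_B(N)$ still has finite global dimension; this is where perfectness of $k$ enters, via the classical bound $\gdim(E\otimes_k F)\le\gdim E+\gdim F$ for finite-dimensional algebras over a perfect field.
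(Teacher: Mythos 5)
Your proposal is correct, and for parts (1) and (2) it follows the same lines as the paper, which simply invokes the standard behaviour of $\gdim$ and $\domdim$ under opposites, products, and Morita equivalence. For part (3) there is a mild but genuine difference in the key lemma: the paper, after noting that $X\otimes_k Y$ is a generator-cogenerator for $A\otimes_k B$ with $\End_{A\otimes_kB}(X\otimes_kY)\cong\End_A(X)\otimes_k\End_B(Y)$, cites M\"uller \cite[Lemma 6]{Mu68} directly for the formula $\domdim(\End_A(X)\otimes_k\End_B(Y))=\min\{\domdim\End_A(X),\domdim\End_B(Y)\}$, whereas you bypass this by computing the rigidity degree $\evd(M\otimes_kN)\geq\min\{\evd(M),\evd(N)\}$ via the K\"unneth isomorphism for $\Ext$ over the field $k$ and then applying M\"uller's Theorem~\ref{thm:Muller characterization}. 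Both routes are sound; yours is more self-contained (it only uses Theorem~\ref{thm:Muller characterization}, which is already central to the paper, plus K\"unneth), while the paper's is shorter because the tensor-product formula for $\domdim$ is already in the literature. One small remark: over a perfect field the relation $\gdim(E\otimes_kF)=\gdim E+\gdim F$ is actually an equality, not just the inequality $\leq$ you state; the inequality you cite is all that is needed, and it is precisely where perfectness enters (so that the tensor product of the semisimple quotients stays semisimple), so your use of it is correct.
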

%

 \begin{proof}
 (1) and (2) are consequences of well-known facts:
 Both global dimension and dominant dimension are invariant
 under taking opposite algebras or passing to Morita equivalent
 algebras; the dominant dimension (respectively, global dimension)
 of the product of two algebras is the minimum
 (respectively, the maximum) of their dominant dimensions
 (respectively, global dimensions).

 $(3)$ Let $X$ and $Y$ be generator-cogenerators in $A\lmod$ and
 $B\lmod$, respectively. Then $X\otimes_kY$ is a generator-cogenerator in
 $(A\otimes_kB)\lmod$.
 Now $\End_{A\otimes_kB}(X\otimes_kY)\cong \End_A(X)\otimes_k\End_B(Y)$ as
 $k$-algebras, and
 $\domdim(\End_A(X)\otimes_k \End_B(Y))=\min\{\domdim\End_A(X),
 \domdim\End_B(Y)\}$ by \cite[Lemma 6]{Mu68}, and
 $\gdim(\End_A(X)\otimes_k\End_B(Y))= \gdim\End_A(X)+\gdim\End_B(Y)$
 whenever $k$ is a perfect field. Therefore,
 $\cf(A\otimes_k B)\geq \min\{\cf(A),\cf(B)\}$.
 \end{proof}

 \subsubsection{Relation with higher representation dimension}
 The following result exhibits rigidity dimension as a counterpart of the higher
 representation dimension introduced by Iyama \cite[Definition 5.4]{Iyama07}.
 Recall that for a natural number $n$, the $n$-th representation dimension
$\rep_n(A)$ of an algebra $A$ is defined to be
 \[
    \rep_n(A) =\inf\Big\{\gdim \End_A(M)\Big | \parbox{7cm}{$M$ is a generator-cogenerator in $A\lmod$ and
    $\domdim \End_A(M)\geq n+1$}\Big\}
 \]

 Auslander's classical representation dimension is $\rep_1$. As Iyama has
 shown \cite{Iyama03},
 $\rep_1$ is always finite. For $n \geq 2$, infinite values do occur.

 \begin{prop}\label{prop:relation between cf dim and rep dim}
 Let $A$ be an algebra and $n$ a positive integer.
 Then $\rep_n(A)<\infty$ if and only if $\cf(A)\geq n+1$.
 In particular, let $M$ be a generator-cogenerator in $A\lmod$ with
 $\evd(M)\geq  {\cf(A)-1}$. Then $\gdim\End_A(M)=\infty$ unless $\End_A(M)$ is semisimple.
 \end{prop}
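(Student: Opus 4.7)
The plan is to unwind both definitions and match up the sets involved, using M\"uller's Theorem \ref{thm:Muller characterization} to translate between the rigidity degree of a generator-cogenerator $M$ and the dominant dimension of $\End_A(M)$. Both quantities $\cf(A)$ and $\rep_n(A)$ are defined as sup/inf over generator-cogenerators $M$ such that $\End_A(M)$ satisfies combined conditions on $\domdim$ and $\gdim$; the equivalence should fall out by picking a suitable witness in each direction.

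For the forward direction, I would argue: if $\rep_n(A)<\infty$, then the infimum is taken over a non-empty set and is achieved at a finite value, so there exists a generator-cogenerator $M$ with $\domdim\End_A(M)\geq n+1$ and $\gdim\End_A(M)<\infty$. This same $M$ is then an admissible candidate for the supremum defining $\cf(A)$, so $\cf(A)\geq\domdim\End_A(M)\geq n+1$. Conversely, if $\cf(A)\geq n+1$, then since $\domdim\End_A(M)$ takes values in $\mathbb{N}_0\cup\{\infty\}$, there must exist a generator-cogenerator $M$ with $\gdim\End_A(M)<\infty$ and $\domdim\End_A(M)\geq n+1$; this $M$ is admissible in the infimum defining $\rep_n(A)$, giving $\rep_n(A)\leq\gdim\End_A(M)<\infty$.

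For the ``in particular'' statement, I would apply M\"uller's theorem to get $\domdim\End_A(M)=\evd(_AM)+2\geq \cf(A)+1$. Suppose, for contradiction, that $\gdim\End_A(M)<\infty$ and that $\End_A(M)$ is not semisimple. By the standing fact recorded in the introduction (a non-semisimple algebra of finite global dimension has finite dominant dimension), $\domdim\End_A(M)$ is a finite natural number. But then $\End_A(M)$ witnesses that $\cf(A)\geq \domdim\End_A(M)\geq \cf(A)+1$, which is impossible (note that the same observation also rules out the case $\cf(A)=\infty$, because then $\evd(_AM)=\infty$ forces $\domdim\End_A(M)=\infty$ and the non-semisimple/finite-global-dimension fact is again contradicted). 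Hence $\End_A(M)$ must be semisimple.

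No step looks genuinely hard; the only point requiring care is the case $\cf(A)=\infty$ in the second part, which is handled by invoking the finiteness of dominant dimension for non-semisimple algebras of finite global dimension rather than an arithmetic inequality. Otherwise the argument is a straightforward bookkeeping exercise translating between the two definitions via Theorem \ref{thm:Muller characterization}.
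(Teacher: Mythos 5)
Your proof of the equivalence $\rep_n(A)<\infty \Leftrightarrow \cf(A)\geq n+1$ is correct and is essentially the same short bookkeeping argument the paper gives (unwinding the two definitions and using that $\domdim$ takes values in $\mathbb{N}_0\cup\{\infty\}$ to extract a witness from the supremum). The paper omits the proof of the ``in particular'' statement; your argument for it — applying M\"uller's theorem to get $\domdim\End_A(M)\geq\cf(A)+1$ and deriving a contradiction from the fact that a non-semisimple algebra of finite global dimension has finite dominant dimension, with the $\cf(A)=\infty$ case handled separately — is correct and fills that gap cleanly.
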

 \begin{proof}
 If $\rep_n(A)<\infty$, then $\cf(A)\geq n+1$ by definition. Conversely,
 if $\cf(A)\geq n+1$, then there exists
 a generator-cogenerator $M$ in $A\lmod$ such
 that $\domdim \End_A(M)\geq n+1$ and $\gdim\End_A(M)<\infty$.
 Hence, $\rep_n(A)<\infty$.
 \end{proof}

 When $n=1$, the statement $\cf(A) \geq 2$ for all $A$ in Corollary
 \ref{cor:lessequaltwo}
 is a reformulation of Iyama's finiteness result, which has
 been used in the definition of rigidity dimension.

\begin{example}
 Let $A$ be a finite dimensional non-simple
 self-injective local $k$-algebra with $\mathrm{rad}^3(A)=0$. Then
 $\cf(A)= 2$, since by \cite[Theorem 3.4]{Ho84} every non-projective
 $A$-module has non-trivial self-extensions.
\end{example}


\subsubsection{Relation with maximal orthogonal modules, and a lower
bound for rigidity dimension}

Recall that Iyama \cite{Iyama07b} has defined a module $M$ to be {\em maximal
  $n$-orthogonal} (for $n \geq 1$) if it satisfies
$M^{\perp_n} = {}^{\perp_n} M = {\add}(M)$.
Such a module $M$ automatically is a generator-cogenerator.
Endomorphism rings of maximal $n$-orthogonal modules always have finite global
dimension.

 \begin{prop}\label{maximal-orthogonal}
 Let $A$ be a non-semisimple algebra and $n$ a natural number.
 If there exists a maximal $n$-orthogonal $A$-module, then $\cf(A)\geq n+2$.
 \end{prop}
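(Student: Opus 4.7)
The plan is to show that a maximal $n$-orthogonal module $M$ is precisely the sort of module over which the supremum in the definition of $\cf(A)$ is taken, and that its associated endomorphism algebra $\End_A(M)$ has dominant dimension at least $n+2$. By the definition of rigidity dimension, this will immediately yield $\cf(A)\geq n+2$.

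First I would verify that $M$ is an admissible test module. The condition $M^{\perp_n}={^{\perp_n}}M=\add(M)$, specialised to $Y=M$, forces $\Ext_A^i(M,M)=0$ for $1\leq i\leq n$, so $\evd({}_AM)\geq n$. Applying the same conditions to $Y={}_AA$ and $Y=\D(A_A)$ (which vanish in all positive Ext-degrees on either side) shows that both $A$ and $\D(A_A)$ lie in $\add(M)$, so $M$ is a generator-cogenerator in $A\lmod$; this is the observation the paper records just before the proposition. The third ingredient, $\gdim\End_A(M)<\infty$, is Iyama's theorem on endomorphism rings of maximal $n$-orthogonal modules (the result cited from \cite{Iyama07b}), so $M$ qualifies as a candidate in the supremum defining $\cf(A)$.

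Next I would invoke M\"uller's Theorem \ref{thm:Muller characterization}: since $M$ is a generator-cogenerator,
\[
\domdim\End_A(M)=\evd({}_AM)+2\geq n+2.
\]
Combining this with $\gdim\End_A(M)<\infty$ yields $\cf(A)\geq \domdim\End_A(M)\geq n+2$, which is the desired inequality. (The hypothesis that $A$ is non-semisimple merely rules out the degenerate case in which the bound is vacuous; if $A$ were semisimple, $\cf(A)=\infty$ trivially.)

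There is no substantial obstacle: the proof is essentially a bookkeeping exercise assembling (a) the automatic generator-cogenerator property of maximal $n$-orthogonal modules, (b) Iyama's finite global dimension result, and (c) M\"uller's identity relating rigidity degree and dominant dimension. The only point requiring momentary care is checking that the defining equalities of maximal $n$-orthogonality deliver both the self-Ext-vanishing and the generator-cogenerator property in one stroke.
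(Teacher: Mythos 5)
Your proof is correct and follows essentially the same route as the paper: identify a maximal $n$-orthogonal module $M$ as an admissible candidate in the supremum (generator-cogenerator with $\gdim\End_A(M)<\infty$), note $\evd({}_AM)\geq n$, and conclude via M\"uller's theorem. The paper simply cites \cite[Theorem 0.2]{Iyama07} for these facts in one stroke (stating $\evd(M)=n$), whereas you unpack the generator-cogenerator property and the self-Ext vanishing directly from the definition $M^{\perp_n}={}^{\perp_n}M=\add(M)$; the substance is the same.
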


 \begin{proof}
   Let $_AM$ be a maximal $n$-orthogonal $A$-module. By
   \cite[Theorem 0.2]{Iyama07},
 $M$ is a generator-cogenerator with $\evd(M)=n$ and $\gdim \End_A(M)<\infty$.
 Therefore, $\cf(A)\geq n+2$.
 \end{proof}

{\bf Remark.} Using Theorem \ref{injective}(2) below, if furthermore
either $1\leq \idim(_AA)\leq n+1$ or $1\leq \idim(A_A)\leq n+1$,
then $\cf(A)=n+2$. Indeed, under this assumption $\cf(A)\leq n+2$ by Theorem \ref{injective}(2),  and therefore $\cf(A)=n+2$.

\subsubsection{Weakly Calabi-Yau self-injective algebras}

Let $A$ be a self-injective algebra. The stable module category $A\stmod$ of $A$ is a $k$-linear Hom-finite triangulated category, and its shift functor $\Sigma$ is the cosyzygy functor $\Omega_A^{-1}$ \cite[Section 2.6]{Happel}.
Recall that $A\stmod$ is said to be \emph{weakly $n$-Calabi-Yau} for a natural number $n$ if there are natural $k$-linear isomorphisms
$$
\underline{\Hom}_A(Y,\Sigma^n(X))\cong \D\underline{\Hom}_A(X,Y)
$$
for any $X,Y\in A\stmod$. Since $A\stmod$ has a Serre duality $\Omega_A\nu_A$
\cite[Prop. 1.2]{ES}, $A$ is weakly  $n$-Calabi-Yau if and only if
$\Omega_A^{-n}$ and $\Omega_A\nu_A$ are naturally isomorphic
as auto-equivalences of $A\stmod$. Equivalently, $\Omega_A^{n+1}\nu_A$ is
naturally isomorphic to the identity functor of $A\stmod$.
If $A$ is symmetric, then it is weakly  $n$-Calabi-Yau if and only if
$\Omega_A^{n+1}$ is naturally isomorphic to the identity functor of $A\stmod$.

\begin{prop}\label{WCY}
Let $A$ be a non-semisimple self-injective algebra. If $A\stmod$ is weakly $n$-Calabi-Yau with $n\ge 1$, then $\cf(A)\leq n+1$.
\end{prop}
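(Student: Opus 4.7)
The plan is to apply M\"uller's Theorem \ref{thm:Muller characterization} to translate the asserted bound on $\cf(A)$ into the statement that every generator-cogenerator $M$ in $A\lmod$ with $\gdim \End_A(M) < \infty$ has rigidity degree $\evd({}_AM) \le n-1$, equivalently $\Ext^n_A(M,M) \ne 0$. So I fix such an $M$ and aim to produce a non-vanishing self-extension of $M$ in degree $n$.

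The first step uses the standard natural isomorphism
\[
\Ext^i_A(X,Y) \;\cong\; \underline{\Hom}_A(X,\Omega_A^{-i} Y)
\]
available for self-injective $A$ and $i \ge 1$, applied with $X = Y = M$. Combined with the weakly $n$-Calabi-Yau isomorphism (with $X = Y = M$), this yields
\[
\Ext^n_A(M,M) \;\cong\; \underline{\Hom}_A(M,\Omega_A^{-n} M) \;\cong\; \D\,\underline{\Hom}_A(M,M).
\]

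The second step is a short argument by contradiction. If $\Ext^n_A(M,M)$ were zero, then $\underline{\Hom}_A(M,M) = 0$, so $\mathrm{id}_M$ factors through a projective, forcing $M$ to be projective. Since $M$ is a generator, this gives $\add(M) = \add({}_AA)$, so $\End_A(M)$ is Morita equivalent to $A^{\rm op}$. But $A$ is self-injective and non-semisimple, hence $\gdim A = \infty$, contradicting $\gdim \End_A(M) < \infty$. Therefore $\Ext^n_A(M,M) \ne 0$, so $\evd({}_AM) \le n-1$, and by M\"uller's theorem $\domdim \End_A(M) \le n+1$; taking the supremum gives $\cf(A) \le n+1$.

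The only substantive point is the identification $\Ext^n_A(M,M)\cong \D\,\underline{\Hom}_A(M,M)$ obtained from the Calabi-Yau duality; the remainder is bookkeeping. One could equivalently package the argument through the Serre duality functor $\Omega_A\nu_A$ on $A\stmod$ recalled just before the proposition, but invoking the weakly Calabi-Yau isomorphism directly is cleaner.
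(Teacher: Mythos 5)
Your argument is correct and follows essentially the same route as the paper: both reduce, via the isomorphism $\Ext_A^n(X,X)\cong\underline{\Hom}_A(X,\Omega_A^{-n}X)$ and the weakly Calabi--Yau duality, to $\Ext_A^n(X,X)\cong\D\underline{\Hom}_A(X,X)$ and then observe this is nonzero when $X$ is non-projective. Your explicit contradiction argument ruling out a projective generator-cogenerator is the only place you spell out a step the paper leaves implicit in its closing ``it is clear that $\cf(A)\leq n+1$.''
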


\begin{proof}
Since $A$ is self-injective and $n\geq 1$, we have $\Ext_A^{n}(X,X)\cong \underline{\Hom}_A(X, \Omega_A^{-n}(X))$ for any $A$-module $X$.
Recall that the shift functor $\Sigma$ of $A\stmod$ is given by the cosyzygy functor $\Omega_A^{-1}$. This yields $\underline{\Hom}_A(X, \Omega_A^{-n}(X))=\underline{\Hom}_A(X, \Sigma^n(X))$.
Since $A\stmod$ is weakly $n$-Calabi-Yau, it follows that $\underline{\Hom}_A(X, \Sigma^n(X))\cong \D \underline{\Hom}_A(X, X)$. Thus $\Ext_A^{n}(X,X)\cong \D \underline{\Hom}_A(X, X)$.
This implies that if $X$ is non-projective, then $\Ext_A^{n}(X,X)$ does not vanish. In this case, $\evd(_AX)\leq n-1$. Now, it is clear that $\cf(A)\leq n+1$.
\end{proof}

 \begin{cor}
Let $A$ be a preprojective algebra of Dynkin type over an algebraically closed
field. Then $\cf(A)=3$.
 \end{cor}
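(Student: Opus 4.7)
The plan is to sandwich $\cf(A)$ between $3$ and $3$ using the two propositions just established: Proposition \ref{WCY} delivers an upper bound via the weakly Calabi--Yau property of the stable category, while Proposition \ref{maximal-orthogonal} gives a matching lower bound via the existence of a maximal orthogonal module. The degenerate Dynkin type $A_1$, where the preprojective algebra is just the ground field, has to be excluded (it would give $\cf(A)=\infty$); so I assume the underlying Dynkin diagram is not $A_1$, which guarantees that $A$ is a non-semisimple finite-dimensional self-injective algebra, the setting in which both propositions apply.

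For the upper bound $\cf(A)\leq 3$, I apply Proposition \ref{WCY} with $n=2$. This reduces the claim to a classical theorem: the stable module category of the preprojective algebra of a Dynkin quiver over an algebraically closed field is $2$-Calabi--Yau as a triangulated category, that is, the Serre functor $\Omega_A\nu_A$ agrees with $\Sigma^2=\Omega_A^{-2}$. Equivalently $\Omega_A^{3}\nu_A\cong \mathrm{id}$ on $A\stmod$, which is precisely the weakly $2$-Calabi--Yau condition appearing in Proposition \ref{WCY}.

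For the lower bound $\cf(A)\geq 3$, I apply Proposition \ref{maximal-orthogonal} with $n=1$. What is needed is a maximal $1$-orthogonal (equivalently, cluster tilting) module in $A\lmod$. Such modules over preprojective algebras of Dynkin type were constructed by Geiss--Leclerc--Schr\"oer in connection with cluster algebras, and also fit naturally into Iyama's framework of higher Auslander--Reiten theory.

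The substantive content sits entirely in these two background ingredients, the $2$-Calabi--Yau property of $A\stmod$ and the existence of a cluster tilting $A$-module; the main obstacle is merely identifying them as the correct input, since their original proofs lie outside the scope of this paper. Once they are invoked, the two inequalities combine at once to give $\cf(A)=3$.
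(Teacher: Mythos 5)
Your proposal is correct and matches the paper's proof essentially verbatim: the upper bound comes from Proposition \ref{WCY} with $n=2$ using the weakly $2$-Calabi--Yau property of $A\stmod$, and the lower bound from Proposition \ref{maximal-orthogonal} with $n=1$ using the Geiss--Leclerc--Schr\"oer cluster-tilting (maximal $1$-orthogonal) modules, exactly as cited in the paper. Your explicit exclusion of the degenerate type $A_1$ (where $A\cong k$ is semisimple) is a sound clarification that the paper leaves implicit.
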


\begin{proof}
It is known that $A\stmod$ is weakly $2$-Calabi-Yau. Proposition \ref{WCY}
implies $\cf(A)\leq 3$.
Further, by \cite[Theorem 2.2 and Corollary 2.3]{GLS}
there exists a maximal $1$-orthogonal $A$-module. It follows from
Proposition \ref{maximal-orthogonal} that
$\cf(A)\geq 3$. Thus $\cf(A)=3$.
\end{proof}

\section{Finiteness} \label{sec:finiteness}

When defining a homological dimension, a basic question is: On which algebras
does it take finite values?
Semisimple algebras have infinite rigidity dimension,
for trivial reasons, which distinguish them from all other algebras. In the
first two subsections we provide two methods to prove finiteness. The first
one is using extension groups between injective and projective modules; this
works for algebras of finite global dimension and for gendo-symmetric
algebras. The second one is using Hochschild cohomology; this works for group
algebras of finite groups. The third subsection then derives
finiteness in general from (still unproven) homological conjectures due to
Tachikawa and Yamagata.

\subsection{Finiteness I: Relation with the extension groups
  $\Ext^*_A(\D(A),A)$}

 As $\D(A)\oplus A$ appears as a direct summand of every
 generator-cogenerator $M$ (up to multiplicities), the
 groups $\Ext^*_A(\D(A),A)$ naturally occur in the computation of
 the Yoneda algebra $\Ext^*_A(M,M)$, and therefore in the computation of
 the rigidity dimension of $A$.

 \begin{thm}\label{injective}
 Let $A$ be a non-selfinjective $k$-algebra. Then
 \begin{itemize}
 \item[\rm(1)] $ \cf(A)\leq \sup \{ n \in {\mathbb N}_0
\mid \Ext_A^{j}(\D(A), A) = 0
   \, \, \mathrm{for} \, \, 1 \leq j \leq n \}+2$.
  Equality holds if the endomorphism algebra of $A\oplus \D(A)$
  has finite global dimension.
  \item[\rm(2)] $\cf(A)\leq \idim ({_A}A)+1 \leq \gdim(A) +1$.
     \end{itemize}
 \end{thm}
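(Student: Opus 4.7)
The plan is to exploit the fact that every generator-cogenerator $M$ for $A$ contains both ${}_AA$ and $\D(A)$ as direct summands, so that $\Ext^i_A(\D(A),A)$ appears as a direct summand of $\Ext^i_A(M,M)$ for every $i\ge 1$. Combined with M\"uller's Theorem~\ref{thm:Muller characterization}, this observation bounds $\domdim\End_A(M)$ uniformly in $M$.

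For part~(1), let $n$ denote the supremum in the statement; if $n=\infty$ the bound is vacuous, so assume $n<\infty$. Then $\Ext^{n+1}_A(\D(A),A)\neq 0$ by maximality of $n$, hence $\Ext^{n+1}_A(M,M)\neq 0$ for every generator-cogenerator $M$, so $\evd({}_AM)\le n$ and $\domdim\End_A(M)\le n+2$. Taking the supremum over admissible $M$ yields $\cf(A)\le n+2$. For equality under the hypothesis $\gdim\End_A(A\oplus\D(A))<\infty$, I would evaluate the rigidity degree of $M_0:=A\oplus\D(A)$ directly: three of the four summands of $\Ext^i_A(M_0,M_0)$ vanish for $i\ge 1$ (by projectivity of $A$ and injectivity of $\D(A)$), leaving $\Ext^i_A(M_0,M_0)\cong\Ext^i_A(\D(A),A)$. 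Hence $\evd({}_AM_0)=n$, and since $M_0$ is admissible by hypothesis, $\cf(A)\ge\domdim\End_A(M_0)=n+2$.

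For part~(2), set $d:=\idim({}_AA)$; the bound is vacuous when $d=\infty$, so assume $d<\infty$, and note $d\ge 1$ because $A$ is not self-injective. By part~(1) it suffices to show $\Ext^d_A(\D(A),A)\neq 0$. The idea is to use a minimal injective resolution $0\to A\to I^0\to\cdots\to I^{d-1}\xrightarrow{p}I^d\to 0$. The surjection $p$ cannot split: if it did, then $\ker p$ would be a direct summand of the injective module $I^{d-1}$ and therefore injective, yielding an injective resolution of ${}_AA$ of length $d-1$ and contradicting the choice of $d$. Consequently, $\mathrm{id}_{I^d}$ does not lie in the image of the postcomposition map $\Hom_A(I^d,I^{d-1})\to\Hom_A(I^d,I^d)$ induced by $p$, forcing $\Ext^d_A(I^d,A)\neq 0$. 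Since $I^d$ is a nonzero injective module, $I^d\in\add(\D(A))$, so $\Ext^d_A(\D(A),A)\neq 0$ as well. Applying part~(1) gives $\cf(A)\le(d-1)+2=d+1$, and the inequality $\idim({}_AA)\le\gdim(A)$ is automatic.

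The main obstacle is producing a module with non-vanishing $d$-th Ext into $A$ that also lies in $\add(\D(A))$; the minimality of the injective resolution, packaged through non-splitting of its final surjection, is the clean device that supplies such a module.
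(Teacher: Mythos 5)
Your proof is correct and follows essentially the same route as the paper's: part~(1) is obtained by identifying $\evd({}_AA\oplus\D(A))$ with the supremum of degrees in which $\Ext_A^{\bullet}(\D(A),A)$ vanishes, then using $A\oplus\D(A)\in\add(M)$ to cap $\evd({}_AM)$ and M\"uller's theorem to convert to $\domdim$; part~(2) reduces to part~(1) by showing $\Ext_A^{d}(\D(A),A)\neq 0$ for $d=\idim({}_AA)<\infty$. The only difference is that the paper states this last non-vanishing without proof, while you supply the argument (non-splitness of the final surjection in a minimal injective resolution, hence $\mathrm{id}_{I^d}$ is not in the image of postcomposition and $\Ext_A^d(I^d,A)\neq 0$ with $I^d\in\add(\D(A))$); that detail is correct and makes the write-up more self-contained.
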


 \begin{proof}
   (1) Let $d:=\sup \{ n\in\mathbb{N}_0 \mid \Ext_A^{j}(\D(A), A) = 0
   \, \, \mathrm{for} \, \, 1 \leq j \leq n \}$.
 Then, $d=\evd(_AA\oplus \D(A))$ since $\Ext^i_A(\D(A),A)\cong
 \Ext^i_A(A\oplus\D(A), A\oplus \D(A))$ for any $i\geq 1$.  Now let $M$ be a
 generator-cogenerator in $A\lmod$. As $A\oplus \D(A)\in \add(_AM)$,
 it follows that $\evd(_AM)\leq \evd(_AA\oplus \D(A))$ and therefore
 $\cf(A)\leq d+2$.
 If furthermore $\gdim \End_A(A\oplus \D(A))<\infty$, then
 $\cf(A) =\evd(_AA\oplus \D(A))+2=d+2$.

 (2) Let $m:=\idim ({_A}A)$. Then $m\geq 1$ since otherwise, $A$ is
 self-injective. When $m$ is infinite, there is nothing to show. Suppose
 $m$ is finite. Then $\Ext^m_A(\D(A),A)\neq 0$ and $(1)$ implies
 $\cf(A)\leq m-1+2=m+1$.
 \end{proof}

\begin{example}
 Let $A$ be a non-semisimple hereditary algebra.
 Then $\cf(A)=2$ since for any generator-cogenerator $M$ in
 $A\lmod$, we have $\D(A)\oplus A\in \add(M)$ and
 $\Ext^1_A(\D(A),A)\neq 0$.
\end{example}

 \begin{cor}\label{gendo-symmetric}
 If $A$ is a gendo-symmetric algebra, i.e., the endomorphism algebra of
 a generator over some symmetric algebra, then $\cf(A)\leq \domdim(A)$.
 \end{cor}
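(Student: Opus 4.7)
The plan is to invoke Theorem \ref{injective}(1), which bounds $\cf(A)$ by $2+\sup\{n\in\mathbb{N}_0\mid \Ext_A^j(\D(A),A)=0 \text{ for } 1\leq j\leq n\}$ for non-self-injective $A$. First I would dispose of the trivial cases: a self-injective gendo-symmetric algebra is symmetric, so $\domdim(A)=\infty$ and the inequality holds vacuously; likewise if $\domdim(A)=\infty$ there is nothing to prove. Assume therefore that $A$ is not self-injective and that $d:=\domdim(A)$ is finite. The corollary will then follow once I establish $\Ext_A^{d-1}(\D(A),A)\neq 0$, for this forces the supremum in Theorem \ref{injective}(1) to be at most $d-2$, giving $\cf(A)\leq d=\domdim(A)$.

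To produce the non-vanishing, I would unwind the gendo-symmetric hypothesis: write $A=\End_B(M)$ with $B$ symmetric and ${}_BM$ a generator. Since $B$ is symmetric it is self-injective, and $B\in\add({}_BM)$, so $M$ is automatically a generator-cogenerator of $B\lmod$. M\"uller's Theorem \ref{thm:Muller characterization} applied to $M$ over $B$ then gives $d=\domdim(A)=\domdim\End_B(M)=\evd({}_BM)+2$, whence $\Ext_B^j(M,M)=0$ for $1\leq j\leq d-2$ and $\Ext_B^{d-1}(M,M)\neq 0$. The crucial step is to transfer this non-vanishing to the $A$-side: using symmetry of $B$ one has $\D(B)\cong B$ as $B$-bimodules, so ${}_A\D(A)\cong \Hom_B(M,\D(B))\cong \Hom_B(M,B)$ as a left $A$-module, while $A\cong \Hom_B(M,M)$. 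Combined with the fact that ${}_BM$ is a generator (so $\Hom_B(M,-)$ is fully faithful on $\add({}_BM)$), this yields a natural isomorphism $\Ext_A^i(\D(A),A)\cong \Ext_B^i(M,M)$ at least in the range $1\leq i\leq d-1$, and in particular $\Ext_A^{d-1}(\D(A),A)\neq 0$.

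The main obstacle is the Ext-transfer isomorphism $\Ext_A^i(\D(A),A)\cong \Ext_B^i(M,M)$. This is a Morita--Tachikawa type comparison specific to the gendo-symmetric setting: it builds on the Schur functor $eA\otimes_A -$ attached to the minimal faithful idempotent $e\in A$ (for which $eAe=B$ and $Ae$ identifies with ${}_BM$), comparing a minimal injective resolution of ${}_AA$ (whose first $d$ terms are projective-injective and hence are controlled by the Schur functor) with a projective resolution of ${}_BM$. Symmetry of $B$ is precisely what matches ${}_A\D(A)$ with $\Hom_B(M,B)$ and makes the Schur functor behave compatibly with the $k$-duality, so that derived Hom-calculations on the two sides agree in the relevant range of degrees.
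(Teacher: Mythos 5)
Your outer strategy matches the paper's: reduce to Theorem \ref{injective}(1) and show that the first non-vanishing degree of $\Ext_A^{\ast\geq1}(\D(A),A)$ is exactly $\domdim(A)-1$, i.e.\ that a gendo-symmetric algebra satisfies $\domdim A=\sup\{n\mid\Ext_A^i(\D A,A)=0,\;1\le i\le n\}+2$. The paper establishes this single formula by citing \cite[Proposition~3.3]{FK11} and is then done in one line.

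The gap is in your re-derivation of that formula. You reduce it, correctly, to M\"uller's Theorem applied to $M$ over the symmetric algebra $B$, which gives $\Ext_B^{d-1}(M,M)\neq0$, and then rest everything on a claimed natural isomorphism $\Ext_A^i(\D(A),A)\cong\Ext_B^i(M,M)$ in degrees $1\le i\le d-1$. This transfer is precisely the substance of \cite[Proposition~3.3]{FK11}; you identify it as ``the main obstacle'' but only sketch a Schur-functor comparison rather than prove it, so the crucial step of the argument is left unestablished. Note that it is not a consequence of the usual Morita--Tachikawa equivalence $\add(_BM)\simeq A\mbox{-proj}$ (which would only control Ext between modules of the form $\Hom_B(M,X)$ with $X\in\add M$, and $\D A$ is not of that form); the gendo-symmetric structure, in particular $\D A\cong Ae\otimes_B eA$, has to enter in an essential way. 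There is also a small slip in the sketch: with $e\in A$ the idempotent projecting onto the direct summand $B$ of $M$, it is $eA$ that identifies with $_BM$, whereas $Ae\cong\Hom_B(M,B)\cong\D M$ (using symmetry of $B$); these are not the same $B$-module in general.
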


 \begin{proof}
 By \cite[Proposition 3.3]{FK11}, the dominant dimension of $A$ is at least two,
 and equals $$\sup\{ n \in {\mathbb N}_0 \mid \Ext_A^i(\D(A), A)=0,
 \, 1\leq i\leq n \}+2.$$
 This implies $\cf(A)\leq \domdim(A)$ by Theorem \ref{injective}(1).
 \end{proof}

 \begin{example}
 In particular, a gendo-symmetric algebra
 $A$ with $\domdim(A) = 2$ satifies $\cf(A) = 2$. Examples of such algebras
 are the non-simple blocks of the Bernstein-Gelfand-Gelfand category $\mathcal
 O$ of semisimple complex Lie algebras. The global dimension of these
 algebras always is an even number, which can be arbitrarily large.
 \end{example}

\subsection{Finiteness II: Relation with Hochschild cohomology}
Let $A$ be a $k$-algebra and $A^{\rm op}$ its opposite
algebra. The Hochschild cohomology ring $\mathrm{HH}^*(A)$ is the
  Yoneda extension algebra
 $\Ext^*_{A^{\rm ev}}(A,A)$ where $A^{\rm ev}=A\otimes_k A^{\rm op}$, the
  enveloping algebra of $A$. So, $\mathrm{HH}^*(A)$ is the direct sum of
  $\mathrm{HH}^i(A):=\Ext_{A^{\rm ev}}^i(A, A)$ for $i\in\mathbb{N}_0$.
This is an
  $\mathbb{N}_0$-graded $k$-algebra. In general, it is not commutative, but
  graded commutative. Moreover,  it may be infinite-dimensional as a vector
  space over $k$. However, if $A^{\rm ev}$ has finite global dimension, then
  $\mathrm{HH}^*(A)$ is finite-dimensional.
  The {\em reduced Hochschild cohomology ring}
 $\overline{\mathrm{HH}}^*(A)$ is the quotient
 $\mathrm{HH}^*(A)/\mathcal{N}$ where $\mathcal{N}$ is the
 ideal of $\mathrm{HH}^*(A)$ consisting of nilpotent elements.
 Although the problem whether the (reduced) Hochschild cohomology
 ring is finitely generated has been widely studied, very little seems to be
 known about the degrees of the homogeneous generators. We will show that
 rigidity dimension of $A$ is closely
 related to the minimal degree of non-nilpotent homogeneous generators of
 positive degree.

We will use the following result, which is essentially combining
\cite[Theorems 2.13 and  5.9] {B} in our situation.
Note that a main tool in \cite{B} is the {\em grade} defined by
Auslander and Bridger,
which is closely related to dominant dimension.

\begin{thm} ({\bf Buchweitz}) \label{Hochschild Cohomology}
Let $M$ be a generator-cogenerator in $A\lmod$ and let  $E=\End_A(M)$.
Then there is an $\mathbb{N}_0$-graded algebra homomorphism
$\mathrm{HH}^*(E)\to\mathrm{HH}^*(A)$ such that
$\mathrm{HH}^i(E)\to \mathrm{HH}^i(A)$ is an isomorphism for each
$0\leq i\leq \evd(_AM)$.
\end{thm}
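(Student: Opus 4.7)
The plan is to construct the graded algebra homomorphism from an exact idempotent-restriction functor on bimodule categories, and then to deduce that its low-degree components are isomorphisms from M\"uller's translation of the rigidity hypothesis into a dominant-dimension bound on $E$.

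\emph{Step 1: the map.} Since $M$ is a generator, $A\in\add(M)$; fix an idempotent $e\in E=\End_A(M)$ with $eM=A$, so that $A\cong eEe$ as $k$-algebras and $Ee$, $eE$ are projective $E$-modules on the respective sides. The functor
$$\Phi:E^{\mathrm{ev}}\lmod\longrightarrow A^{\mathrm{ev}}\lmod,\qquad X\longmapsto eXe,$$
is exact and satisfies $\Phi(E)=A$. Since $\mathrm{HH}^*(R)=\Ext^*_{R^{\mathrm{ev}}}(R,R)$ carries the Yoneda product, which is preserved by any exact functor sending the comparison object to the comparison object, $\Phi$ induces a graded $k$-algebra homomorphism
$$\phi:\mathrm{HH}^*(E)\longrightarrow\mathrm{HH}^*(A).$$
In degree zero, $\phi$ is the restriction $Z(E)\to Z(A)$, $z\mapsto eze$, which is bijective by the double centralizer property $A\cong\End_E(Ee)$, automatic because $\domdim E\geq 2$ by M\"uller's Theorem~\ref{thm:Muller characterization}.

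\emph{Step 2: resolution-level description.} Next I would take a minimal projective bimodule resolution $P_\bullet\to E$ over $E^{\mathrm{ev}}$. Applying $\Phi$ yields an exact resolution $eP_\bullet e\to A$ of $A$ as an $A$-bimodule, and the chain map
$$\Hom_{E^{\mathrm{ev}}}(P_\bullet,E)\longrightarrow\Hom_{A^{\mathrm{ev}}}(eP_\bullet e,A),\qquad g\mapsto\Phi(g),$$
represents $\phi$. On a summand $Ef_1\otimes_k f_2E$ of some $P_i$, this chain map is an isomorphism precisely when $Ef_1\in\add(Ee)$ and $f_2E\in\add(eE)$, i.e.\ when the indecomposable bimodule factors of the $i$-th syzygy already come from $A$-modules under the standard idempotent adjunction.

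\emph{Step 3: using dominant dimension, and the main obstacle.} M\"uller's theorem gives $\domdim E=\evd(_AM)+2=n+2$, which is a statement about the injective coresolution of $_EE$. The content of \cite[Theorems 2.13 and 5.9]{B}, phrased via the Auslander--Bridger grade, converts this one-sided bound into a structural constraint on a minimal projective $E^{\mathrm{ev}}$-resolution of $E$: the modules $P_i$ for $0\leq i\leq n$ can be chosen so that the chain map of Step 2 is bijective in those degrees, whence $\phi^i:\mathrm{HH}^i(E)\xrightarrow{\sim}\mathrm{HH}^i(A)$ for $0\leq i\leq n$. This bridge from a one-sided module statement to a bimodule statement is the decisive point of the argument and the reason Buchweitz combines the two theorems rather than using either alone; once in place, the rest is formal, driven by the exactness of $\Phi$, the double centralizer property in degree zero, and M\"uller's identity.
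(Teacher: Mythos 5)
Your Step~1 is a reasonable candidate construction: the corner functor $\Phi = e(-)e: E^{\mathrm{ev}}\lmod\to A^{\mathrm{ev}}\lmod$ is exact, sends $E$ to $A$, and therefore induces a graded algebra map on Hochschild cohomology; the degree-zero identification with centres is also fine. However, the mechanism you propose in Step~2 does not work. The corner functor does not send $E^{\mathrm{ev}}$-projectives to $A^{\mathrm{ev}}$-projectives: for a summand $Ef\otimes_k f'E$ of a projective $E^{\mathrm{ev}}$-module, $\Phi(Ef\otimes_k f'E)=eEf\otimes_k f'Ee$, and $eEf$ is a direct summand of $eE$, which as a one-sided $A=eEe$-module is (up to duality and convention) the generator-cogenerator $M$ itself --- not an $A$-projective module unless $A$ is self-injective. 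So $eP_\bullet e$ is \emph{not} a projective resolution of $A$ over $A^{\mathrm{ev}}$, the cochain complex $\Hom_{A^{\mathrm{ev}}}(eP_\bullet e,A)$ does not compute $\mathrm{HH}^*(A)$, and the term-wise bijectivity criterion you state is both the wrong criterion and already violated at $P_0=E\otimes_kE^{\mathrm{op}}$, which is never in $\add(Ee\otimes_k eE)$ when $e\neq 1$. The isomorphism $\mathrm{HH}^i(E)\cong\mathrm{HH}^i(A)$ has to be extracted at the level of cohomology, via a comparison that controls the error by vanishing of $\Ext$ or grade, not by matching terms of resolutions.

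The actual argument is entirely deferred to your Step~3, which restates the conclusion and explicitly flags the bridge from the one-sided bound $\domdim E=\evd({_AM})+2$ to the bimodule statement as ``the decisive point'' supplied by the cited theorems, without giving it. That bridge --- converting M\"uller's bound into a grade bound for the ideal $EeE$ (equivalently $E/EeE$) and then into vanishing of the correction terms in a long exact sequence or spectral sequence comparing $\mathrm{HH}^*(E)$ with $\mathrm{HH}^*(A)$ --- is exactly the content of Theorem~\ref{Hochschild Cohomology}. Note that the paper itself does not reprove the result; it records it as a direct combination of \cite[Theorems 2.13 and 5.9]{B} and points to the Auslander--Bridger grade as the engine. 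Your proposal is a rough map of where that argument should live, but it contains a genuine gap at its core, and the concrete mechanism offered in Step~2 would not close it.
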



The connection with rigidity dimension is as follows:

 \begin{thm}\label{thm:cf dim and Hochschild cohomology ring}
   Let $A$ be an algebra over a perfect field $k$.
 Suppose that $\overline{\mathrm{HH}}^{\ast}(A)$ is not concentrated
 in degree zero.  Then $\cf(A)$ is finite. \\
More precisely,
 let $\delta(A):=\inf\{i\geq 1\mid \overline{\mathrm{HH}}^i(A)\neq 0\}$.
 Then $\cf(A)\leq \delta(A)+1$.
 If $k$ has characteristic different from $2$ and $\cf(A)$ is even, then
 $\cf(A)\leq \delta(A)$.
 \end{thm}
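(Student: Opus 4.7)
The plan is to combine Buchweitz's Theorem \ref{Hochschild Cohomology} with the observation that, for an endomorphism algebra $E$ with finite global dimension over a perfect field, the reduced Hochschild cohomology $\overline{\mathrm{HH}}^*(E)$ is concentrated in degree zero. This will let us transfer the non-nilpotent generator in $\overline{\mathrm{HH}}^{\delta(A)}(A)$ back to $E$ and derive a contradiction once $\evd(_AM) \geq \delta(A)$.

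Here is how I would carry this out. Fix a generator-cogenerator $M$ in $A\lmod$ with $\gdim E < \infty$, where $E = \End_A(M)$. Since $k$ is perfect, the enveloping algebra $E^{\mathrm{ev}} = E \otimes_k E^{\mathrm{op}}$ also has finite global dimension (bounded by $2\gdim E$), so $\mathrm{HH}^i(E) = \Ext^i_{E^{\mathrm{ev}}}(E,E) = 0$ for all sufficiently large $i$. By graded commutativity of $\mathrm{HH}^*(E)$, any homogeneous element $\tilde{x}$ of positive degree $d$ satisfies $\tilde{x}^n \in \mathrm{HH}^{nd}(E) = 0$ for large $n$, so $\tilde{x}$ is nilpotent. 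Hence $\overline{\mathrm{HH}}^i(E) = 0$ for all $i \geq 1$.

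Suppose, towards a contradiction, that $\evd(_AM) \geq \delta(A)$. Theorem \ref{Hochschild Cohomology} provides a graded $k$-algebra homomorphism $\varphi \colon \mathrm{HH}^*(E) \to \mathrm{HH}^*(A)$ that is an isomorphism in each degree $0 \leq i \leq \evd(_AM)$, in particular in degree $\delta(A)$. By definition of $\delta(A)$, there is an element $\alpha \in \mathrm{HH}^{\delta(A)}(A)$ whose class in $\overline{\mathrm{HH}}^{\delta(A)}(A)$ is non-zero, i.e. $\alpha$ is non-nilpotent. Lift $\alpha$ to $\tilde{\alpha} \in \mathrm{HH}^{\delta(A)}(E)$ via $\varphi$. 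By the previous paragraph, $\tilde{\alpha}$ is nilpotent, so $\tilde{\alpha}^n = 0$ for some $n$; since $\varphi$ is a ring homomorphism, $\alpha^n = \varphi(\tilde{\alpha})^n = \varphi(\tilde{\alpha}^n) = 0$, contradicting non-nilpotence of $\alpha$. Hence $\evd(_AM) \leq \delta(A) - 1$ for every such $M$, and taking the supremum in the reformulation $\cf(A) = \sup_M \evd(_AM) + 2$ yields $\cf(A) \leq \delta(A) + 1$.

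For the refined bound when $\mathrm{char}(k) \neq 2$, I would argue that $\delta(A)$ is automatically even. Indeed, for any homogeneous $x \in \mathrm{HH}^*(A)$ of odd degree, graded commutativity gives $x^2 = -x^2$, so $2x^2 = 0$ and hence $x^2 = 0$; thus every odd-degree class is nilpotent and $\overline{\mathrm{HH}}^i(A) = 0$ for all odd $i$. Consequently $\delta(A)$ is even, so $\delta(A)+1$ is odd; if $\cf(A)$ is even, the inequality $\cf(A) \leq \delta(A)+1$ forces $\cf(A) \leq \delta(A)$.

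The main obstacle I anticipate is the subtle use of Buchweitz's theorem: what is needed is not only that $\varphi$ is an isomorphism on the degree-$\delta(A)$ components, but that it is a \emph{ring} homomorphism, so that nilpotency in $\mathrm{HH}^*(E)$ descends to nilpotency of $\alpha = \varphi(\tilde{\alpha})$ in $\mathrm{HH}^*(A)$, even though $\alpha^n$ may live in a degree well beyond $\evd(_AM)$ where $\varphi$ is no longer an isomorphism. The other small point requiring care is that, in a graded commutative algebra, the nilpotent elements indeed form a graded ideal, so that $\overline{\mathrm{HH}}^*(A)$ inherits a well-defined graded algebra structure compatible with $\delta(A)$.
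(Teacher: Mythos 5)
Your argument is correct and follows the same route as the paper: you invoke Buchweitz's Theorem \ref{Hochschild Cohomology} for the graded ring homomorphism $\varphi$, use $\gdim E < \infty$ (together with perfectness of $k$) to conclude that $\mathrm{HH}^*(E)$ is eventually zero and thus has only nilpotent elements in positive degree, transfer nilpotency along $\varphi$ up to degree $\evd({}_AM)$, and handle the parity refinement via graded commutativity. The only cosmetic difference is that you phrase the key step as a contradiction (assume $\evd({}_AM)\geq\delta(A)$ and lift a non-nilpotent class) whereas the paper argues directly that every positive-degree class up to degree $\domdim E - 2$ is nilpotent; the underlying mechanism is identical.
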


 \begin{proof}
 Let $M$ be a generator-cogenerator in $A\lmod$ and  let $E=\End_A(M)$.
 By Theorem \ref{Hochschild Cohomology}, there is a graded algebra
homomorphism $\varphi: \mathrm{HH}^*(E)\to\mathrm{HH}^*(A)$ such that
$\varphi_i: \mathrm{HH}^i(E)\to \mathrm{HH}^i(A)$ is an isomorphism for
 $0\leq i\leq \evd(_AM)=\domdim E-2$, where the final equality follows from
 M\"uller's Theorem \ref{thm:Muller characterization}.

 If $\gdim E<\infty$, then $\mathrm{HH}^*(E)$ is a finite dimensional
 $k$-algebra,  and therefore all homogeneous elements in
 $\mathrm{HH}^*(E)$ of positive degrees are nilpotent. Using $\varphi$
 and $\varphi_m$, all homogenous elements in $\mathrm{HH}^*(A)$ of degree
 $m$ with $1\leq m\leq\domdim E-2$ are seen to be nilpotent.
 Hence, \mbox{$\domdim E-1\leq \delta(A)$} since $\delta(A)$ detects
 the minimal degree of homogeneous generators of positive degrees in
 $\overline{\mathrm{HH}}^*(A)$. Thus $\cf(A)\leq \delta(A)+1$.

 If $k$ has characteristic different from $2$, then homogeneous elements in
 $\mathrm{HH}^*(A)$ of odd degrees are nilpotent, and therefore $\delta(A)$ must
 be an even number or $\infty$. When $\cf(A)$ is an even number, then
 $\cf(A)\leq \delta(A)$ as claimed.
 \end{proof}

 \noindent {\bf Remark.} (1) Theorem \ref{thm:cf dim and Hochschild cohomology
 ring} provides an upper bound for the rigidity dimension of $A$ as long as some
 information on its Hochschild cohomology ring is known. Conversely, a lower
 bound for rigidity dimension is a lower bound for the degree
 of homogeneous generators in the reduced Hochschild cohomology ring.
 Examples in \cite{CFKKY2} will show that the bound in Theorem
 \ref{thm:cf dim and Hochschild cohomology ring} is optimal.

 (2) The reduced Hochschild cohomology ring
 $\overline{\mathrm{HH}}^*(A)$ is not finitely generated in general. The
 first example has been a seven dimensional $k$-algebra $A$
 found by Xu \cite{Xu08}.
 If $k$ has characteristic two, then $\delta(A)=1$ \cite{Xu08}, see also
 \cite[Theorem 4.5]{Snashall}, and therefore $\cf(A)=2$.

 (3) Even when $A$ has infinite global dimension, the reduced Hochschild
 cohomology ring  $\overline{\mathrm{HH}}^*(A)$ can be concentrated in degree
 zero, see \cite{BGMS,GSS} for examples.
 \medskip

 Theorem \ref{thm:cf dim and Hochschild cohomology ring} can be applied to
 group algebras, for which we obtain finiteness in all non-semisimple cases.

 \begin{thm}\label{cor:cf dim is finite for group algebra}
 Let $G$ be a finite group and $k$ a perfect field of characteristic $p\geq 0$.

 Then $\cf(kG) < \infty$ if and only if $p$ divides the order $|G|$ of $G$ (if
 and only if $kG$ is not semisimple). In this case, $\cf(kG) \leq |G|$.
 \end{thm}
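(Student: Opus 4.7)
The plan is to split the claim into (i) the equivalence of the finiteness of $\cf(kG)$ with non-semisimplicity of $kG$, and (ii) the explicit bound $\cf(kG) \leq |G|$ in the non-semisimple case, both reducing to Theorem \ref{thm:cf dim and Hochschild cohomology ring}.

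For (i), Maschke's theorem gives that $kG$ is semisimple exactly when $p \nmid |G|$, and semisimple algebras have $\cf = \infty$ since their dominant dimension is infinite. Conversely, assume $p \mid |G|$. To apply Theorem \ref{thm:cf dim and Hochschild cohomology ring}, I need $\overline{\mathrm{HH}}^*(kG)$ not concentrated in degree zero. The key tool is the standard graded ring embedding $\iota : H^*(G, k) \hookrightarrow \mathrm{HH}^*(kG)$ whose image is central; it arises from the Eckmann-Shapiro isomorphism applied to $kG \cong \mathrm{Ind}_{\Delta G}^{G \times G}(k)$ as a module over $k(G \times G) \cong kG \otimes (kG)^{\mathrm{op}}$. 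It therefore suffices to produce a non-nilpotent positive-degree class in $H^*(G, k)$. Let $P$ be a Sylow $p$-subgroup of $G$, non-trivial because $p \mid |G|$; then the restriction $H^*(G, k) \to H^*(P, k)$ is split injective with splitting $[G:P]^{-1}\,\mathrm{cor}$, since $[G:P]$ is a unit in $k$. By Quillen's theorem, the Krull dimension of $H^*(P, k)$ equals the $p$-rank of $P$, which is at least one, so $H^*(P, k)$ contains non-nilpotent elements in positive degree; transferring them back via the splitting and $\iota$ yields the desired class. Hence $\delta(kG) < \infty$ and Theorem \ref{thm:cf dim and Hochschild cohomology ring} gives $\cf(kG) < \infty$.

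For (ii), I invoke Symonds' resolution of Benson's regularity conjecture, which gives an explicit bound $|G|$ on the degrees of the homogeneous generators of the reduced group cohomology ring, and in particular forces $\delta(kG) \leq |G|$ via $\iota$. Theorem \ref{thm:cf dim and Hochschild cohomology ring} then yields $\cf(kG) \leq |G| + 1$. To absorb the extra $+1$ I use the parity clause of that theorem: in odd characteristic, non-nilpotent Hochschild classes lie in even degrees by graded commutativity, so when $\cf(kG)$ is even the bound improves to $\cf(kG) \leq \delta(kG)$; meanwhile $|G|$ is automatically even whenever $p = 2$. A short case analysis on the parity of $|G|$ and on $p$ then yields the stated bound $\cf(kG) \leq |G|$.

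The main obstacle I anticipate is tightening the bound to exactly $|G|$ in (ii): Symonds' theorem is phrased in terms of generators of the cohomology ring, and translating it into a bound on the smallest positive degree of $\overline{\mathrm{HH}}^*(kG)$ requires careful parity bookkeeping and a separate treatment for $p = 2$. The existence and naturality of the central ring map $H^*(G, k) \to \mathrm{HH}^*(kG)$ underpinning (i) is classical but will need a precise citation.
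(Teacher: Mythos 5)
Your proof of the first assertion (finiteness iff $p \mid |G|$) is correct but routes differently from the paper. Where the paper invokes Chouinard's theorem (via \cite{Ben}) to conclude directly that $k$ is not projective and that $\overline{\mathrm{H}}^*(G,k)$ is nonzero in some positive degree, you pass to a Sylow $p$-subgroup $P$, use the split injectivity of restriction $\mathrm{H}^*(G,k)\to\mathrm{H}^*(P,k)$ (split by $[G:P]^{-1}\mathrm{cor}$), and invoke Quillen's dimension theorem to produce the non-nilpotent positive-degree class. Both arguments are sound and live in the same circle of ideas; the Chouinard route the paper uses is slightly more economical since it addresses $G$ directly without passing to a subgroup. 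Your concerns about the precise form and injectivity of $\mathrm{H}^*(G,k)\to\mathrm{HH}^*(kG)$ are well placed; the paper cites Linckelmann \cite[Proposition 4.5]{Link99} for exactly this injective graded map.

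The proof of the explicit bound $\cf(kG)\leq |G|$, however, has a real gap, and it traces back to the constant in Symonds' theorem. The paper cites \cite[Proposition 0.3]{Sym10} as giving homogeneous generators of $\mathrm{H}^*(G,k)$ in degree at most $|G|-1$, which immediately yields $\gamma(kG)\leq|G|-1$, hence $\delta(kG)\leq|G|-1$, hence $\cf(kG)\leq\delta(kG)+1\leq|G|$, with no parity argument needed. You instead work with the weaker bound $\delta(kG)\leq|G|$, obtain $\cf(kG)\leq|G|+1$, and attempt to absorb the extra $1$ by the parity clause of Theorem \ref{thm:cf dim and Hochschild cohomology ring}. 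This cannot close the gap in general: the parity clause only improves the bound when $\mathrm{char}(k)\neq 2$ \emph{and} $\cf(kG)$ is even, so it says nothing when $p=2$ (your observation that $|G|$ is then even does not by itself give $\cf(kG)\leq|G|$), and when $p$ is odd and $|G|$ is even it leaves open the case $\cf(kG)$ odd, $\delta(kG)=|G|$, where you still only get $\cf(kG)\leq|G|+1$. The fix is to use the sharper $|G|-1$ form of Symonds' result, at which point the parity discussion becomes unnecessary.
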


 \begin{proof}
 By \cite[Proposition 4.5]{Link99}, there is an injective graded $k$-algebra
 homomorphism
 \[
 \theta : \mathrm{H}^*(G,k)\to \mathrm{HH}^*(kG)
 \]
 where $\mathrm{H}^*(G,k)=\Ext^*_{kG}(k,k)$ is the cohomology ring
 of the group algebra $kG$. Let $\overline{\mathrm{H}}^*(G,k)$ be
 the cohomology ring $\mathrm{H}^*(G,k)$ modulo nilpotent elements. Then
 $\theta$ induces an injective morphism $\overline{\mathrm{H}}^*(G,k)\to
 \overline{\mathrm{HH}}^*(kG)$.

 If $p$ does not divide $|G|$, then $kG$ is semisimple and hence
 $\cf(kG)=\infty$. If $p$ divides $|G|$, then by
Chouinard's theorem in group cohomology, see
\cite[Lemma 5.2.3 and Theorem 5.2.4]{Ben}, $k$ is not projective and
\[
 \gamma(kG):=\inf\{i\geq 1\mid \overline{\mathrm{H}}^i(G,k)\neq 0\} < \infty.
 \]

 The embedding $\overline{\mathrm{H}}^*(G,k)\to \overline{\mathrm{HH}}^*(kG)$
 gives $\gamma(kG)\geq \delta(kG)$, the minimal
 degree of homogeneous generators of $\overline{\mathrm{HH}}^*(kG)$ of
 positive degrees. Using Theorem
 \ref{thm:cf dim and Hochschild cohomology ring},
 it follows that $$\cf(kG)\leq \delta(kG)+1\leq \gamma(kG)+1 < \infty.$$

 The explicit upper bound follows from a major result by Symonds,
 who proved Benson's regularity conjecture. According to
\cite[Proposition 0.3]{Sym10}, which is a consequence of the main result in
\cite{Sym10}, for a finite group $G$ with more than one element, group
cohomology $\mathrm{H}^*(G,k)$ has a set of homogeneous generators of degree
at most $|G| - 1$. As a consequence, the reduced
 group cohomology ring has a set of generators in degrees smaller than
 $|G|$, and thus
 \[
 \gamma(kG):=\inf\{i\geq 1\mid \overline{\mathrm{H}}^i(G,k)\neq 0\}\leq |G|-1.
 \]
 \end{proof}

 \noindent {\bf Remark.} (1) Symonds' result \cite{Sym10} provides an
 upper bound for the degrees of homogeneous generators of $\mathrm{H}^*(G,k)$,
 Theorem \ref{cor:cf dim is finite for group algebra} shows that rigidity dimension
 of the group algebra $kG$ may be used to provide a lower bound for the
 degrees of homogeneous generators of the Hochschild cohomology ring modulo
 nilpotents.

 (2) In \cite{CFKKY2}, the rigidity dimensions of defect
one blocks of group algebras will be determined.

\subsection{Finiteness III: Using homological conjectures}
Is it to be expected that $\cf$
is always finite, except for semisimple algebras?
Some evidence is provided here by deriving this statement from some
homological conjectures.
\medskip

\begin{itemize}
\item[(TC1)] {\bf Tachikawa's First Conjecture} \cite[p.\! 115]{Tac73}
Let $A$ be a finite dimensional $k$-algebra. Suppose
$\Ext^i_A(\D(A),A)=0$ for all $i\geq 1$. Then $A$ is self-injective.

\item[(TC2)] {\bf Tachikawa's Second Conjecture} \cite[p.\! 116]{Tac73}
Let $A$ be a finite dimensional self-injective $k$-algebra and $M$ a
finitely
 generated $A$-module. Suppose $\Ext^i_A(M,M)=0$ for all $i\geq 1$.
 Then $M$ is a projective $A$-module.

\item[(YC)] {\bf Yamagata's Conjecture} \cite[p.\! 876]{SkowYam}
  There exists a function
 $\varphi: \mathbb{N}\to \mathbb{N}$ such that for any finite
 dimensional $k$-algebra $A$ with finite dominant dimension,
 $\domdim A\leq \varphi(n)$ where $n$ is the number of isomorphism classes
 of simple $A$-modules.
\end{itemize}

 (TC1) and (TC2) together are equivalent to Nakayama's conjecture.

\begin{thm}\label{thm:cf finite from conjectures}
 Let $A$ be a finite dimensional non-semisimple $k$-algebra.
 \begin{itemize}
   \item[\rm(1)] If $A$ is not self-injective, then (TC1)
   implies $\cf(A)<\infty$.
   \item[\rm(2)] If (YC) holds, then $\cf(A)<\infty$.
 \end{itemize}
 \end{thm}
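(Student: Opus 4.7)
My plan treats the two parts separately.

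For part (1), I would apply Theorem \ref{injective}(1) directly. The contrapositive of (TC1), together with the hypothesis that $A$ is not self-injective, produces some $i \geq 1$ with $\Ext_A^i(\D(A), A) \neq 0$. Therefore the quantity
\[
d := \sup\{\,n \in \mathbb{N}_0 \mid \Ext_A^j(\D(A), A) = 0 \text{ for all } 1 \leq j \leq n\,\}
\]
appearing in Theorem \ref{injective}(1) satisfies $d < i$ and is in particular finite, so the theorem yields $\cf(A) \leq d + 2 < \infty$.

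For part (2), I fix a generator-cogenerator $M$ with $E := \End_A(M)$ of finite global dimension and aim to apply (YC) to $E$. This first requires $\domdim E < \infty$, which follows from the equality $\idim({_E}E) = \gdim E$ (valid for finite-dimensional algebras of finite global dimension, by duality applied to $\D(E)$) combined with the elementary inequality $\domdim E \leq \idim({_E}E)$, giving $\domdim E \leq \gdim E < \infty$. Then (YC) yields
\[
\domdim E \leq \varphi(n_E),
\]
where $n_E$ is the number of isomorphism classes of simple $E$-modules, i.e.\ the number of pairwise non-isomorphic indecomposable summands of the basic version of $M$.

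The main obstacle is to turn this pointwise bound into a uniform bound over all admissible $M$, since $n_E$ can a priori be arbitrarily large. The strategy I would pursue is to show that any $M$ with large $\evd(M)$ can be replaced by some $M' \in \add(M)$ which is still a generator-cogenerator, still satisfies $\gdim\End_A(M') < \infty$, and has $n_{E'}$ controlled by an invariant depending only on $A$. Since removing summands never decreases $\evd$ (as $\Ext^*(M',M')$ is a direct summand of $\Ext^*(M,M)$), such a reduction would give $\domdim E \leq \domdim E' \leq \varphi(n_{E'})$, uniformly bounded. The delicate point, and where I expect the real work to lie, is preserving finite global dimension under passage to summands, since corner algebras of algebras of finite global dimension need not themselves have finite global dimension; the reduction must therefore proceed through a canonical construction tailored to $A$ rather than by arbitrary summand removal.
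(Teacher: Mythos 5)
Your proof of part (1) is correct and matches the paper's argument exactly: the contrapositive of (TC1) furnishes a finite $n$ with $\Ext_A^n(\D(A),A)\neq 0$, and Theorem \ref{injective}(1) then caps $\cf(A)$.

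Part (2) contains a genuine gap, and you have in fact identified the wrong obstacle. You insist on replacing $M$ by a summand $M'$ that is a generator-cogenerator \emph{and still satisfies $\gdim\End_A(M')<\infty$}, and you correctly note that this last condition is problematic, since finite global dimension does not pass to corner algebras. But preserving finite global dimension is not needed at all: read the hypothesis of (YC) again --- it applies to \emph{any} algebra with finite \emph{dominant} dimension, regardless of its global dimension. The paper exploits this as follows. Write $M\cong A\oplus\D(A)\oplus\bigoplus_{i=1}^m M_i$ (up to multiplicity) and set $M_{i,j}:=A\oplus\D(A)\oplus M_i\oplus M_j$. Since $\gdim E<\infty$ (and $E$ is not semisimple because $A$ is not), $\domdim E\le\gdim E<\infty$, so $s:=\evd(M)$ is finite. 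Because $\Ext^{s+1}_A(M,M)\ne 0$ and $\add(M)=\add\bigl(\bigoplus_i(A\oplus\D(A)\oplus M_i)\bigr)$, there is a specific pair $(u,v)$ with $\Ext^{s+1}_A(A\oplus\D(A)\oplus M_u,A\oplus\D(A)\oplus M_v)\ne 0$, forcing $\evd(M_{u,v})=s$. Thus $\domdim\End_A(M_{u,v})=s+2=\domdim E<\infty$, and (YC) applies to $\End_A(M_{u,v})$ --- whose global dimension may very well be infinite --- giving $\domdim E=\domdim\End_A(M_{u,v})\le\max\{\varphi(d),\varphi(d+1),\varphi(d+2)\}$, where $d$ is the number of indecomposables in $\add(A\oplus\D(A))$. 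This is the uniform bound depending only on $A$ and $\varphi$.

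Two smaller points worth flagging. First, $M_{u,v}$ need not be a generator-cogenerator in your sense of eliminating duplicate summands of $A\oplus\D(A)$ --- it \emph{is} one by construction --- but the point is that no further constraint on $\End_A(M_{u,v})$ is imposed. Second, your remark that ``removing summands never decreases $\evd$'' is true but works against you: it gives $\evd(M')\ge\evd(M)$, so an arbitrary summand $M'$ could have $\evd(M')=\infty$ and hence infinite dominant dimension, placing it outside the scope of (YC). The careful selection of $(u,v)$ via the witnessing non-vanishing extension is what pins $\evd(M_{u,v})$ down to equal $\evd(M)$, and this is the step your sketch does not reach.
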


 \begin{proof}
 (1) If $A$ is not self-injective, then by (TC1),
 $\Ext^n_A(\D(A),A)\neq 0$ for some natural number $n\geq 1$. Therefore, by
 Theorem \ref{injective}, $\cf(A)\leq n+2$.

(2) Let $M$ be a generator-cogenerator in $A\lmod$. Up to multiplicities of direct summands, suppose
 ${_A}M=A\oplus \D(A)\oplus \bigoplus_{i=1}^{m}M_i$, where
 $m\geq 1$, and $M_i$ is either zero or indecomposable, non-projective and non-injective.

{\it Claim.}  $\evd(M)=\min\{\evd(A\oplus \D(A)\oplus M_i\oplus M_j)\mid 1\leq i,j\leq m\}.$

 {\it Proof of claim.} For $1\leq i,j\leq m$, set $M_{i,j}:=A\oplus \D(A)\oplus M_i\oplus M_j$. Since $M_{i,j}\in \add(M)$,
 $\evd(M)\leq\evd(M_{i,j})$. If $\evd(M)$ is infinite, then there is nothing to show.
 Suppose $\evd(M)=s<\infty$. Then $\Ext_A^t(M,M)=0$ for $1\leq t\leq s$ and $\Ext_A^{s+1}(M,M)\neq 0$.
 Since $\add(M)=\add\big(\bigoplus_{1\leq i\leq m}(A\oplus \D{A}\oplus M_i)\big)$, there is a pair $(u,v)$ of integers with $1\leq u,v\leq m$ such that $\Ext_A^{s+1}(A\oplus\D{A}\oplus M_u, A\oplus \D{A}\oplus M_v)\neq 0$.
 This implies $\evd(M_{u,v})\leq s$, and thus $\evd(M_{u,v})=s$.

Suppose $E:=\End_A(M)$ has finite global dimension.
By assumption, $A$ is not semisimple, hence $E$ is not semisimple either.
It follows that $\domdim E\leq\gdim E<\infty$.
Moreover, by Theorem \ref{thm:Muller characterization}, $\domdim E=\evd(M)+2$ and $\domdim \End_A(M_{u,v})=\evd(M_{u,v})+2$.
Then the equality $\evd(M_{u,v})=\evd(M)$ implies that $\domdim \End_A(M_{u,v})=\domdim E<\infty$.
Denote by $d$ the number of isomorphism classes of indecomposable objects in $\add(A\oplus \D(A))$. Then
$M_{u,v}$ has at least $d$ and at most $d+2$ non-isomorphic indecomposable direct summands.
Applying $(YC)$ to $\End_A(M_{u,v})$ yields the inequality
$$\domdim\End_A(M_{u,v})\leq\max\{\varphi(d),\varphi(d+1),\varphi(d+2)\}.$$
Consequently, $\domdim E$ has a uniform upper bound,
which only depends on the function $\varphi$ and $d$. Thus $\cf(A)<\infty$.
 \end{proof}

\section{Stable equivalences and invariance I}
\label{sec:stable invariant}
 In this and the next section, we discuss the invariance of rigidity dimension
 under stable or derived equivalences. The first main result in this section
 shows that algebras without nodes have minimal rigidity dimension in their
 stable equivalence class. In particular, two stably equivalent algebras
 without nodes have the same rigidity dimension. The second main result
provides stronger information for stable equivalences between self-injective
algebras. In particular, stable equivalences preserving the triangulated
structure are shown to preserve rigidity dimension.

 \subsection{Notation and definitions, and some
correspondences}
 Let $A$ be an algebra. By $A\stmod$ we denote the stable
 module category of $A$. It has the same objects as $A\lmod$,
 but the morphism set between two $A$-modules $X$ and $Y$
 is given by
 $$\underline{\Hom}(X,Y):=\Hom_A(X,Y)/\mathscr{P}(X,Y)$$
 where $\mathscr{P}(X,Y)$ consists of homomorphisms
 factoring through projective $A$-modules.

We will use functor categories to translate from stable equivalences to
equivalences of abelian categories.
 The category of finitely presented functors from $(A\stmod)^{\rm{op}}$
 to the category $\mathscr{Ab}$ of abelian groups is
 denoted by $(A\stmod)\lmod$.
 Recall that a functor $F:(A\stmod)^{\rm{op}}\to \mathscr{Ab}$
 is finitely presented if there is a morphism
 $f:X\to Y$ in $A\stmod$ inducing an exact sequence
 of functors $\underline{\Hom}_A(-,X)\to\underline{\Hom}_A(-,Y)\to F\to 0$.
 Note that $(A\stmod)\lmod$ is an abelian category
 with enough projective objects
 and injective objects. For each $X\in A\lmod$,
 $\underline{\Hom}_A(-,X)$ is a projective object and  $\Ext^1_A(-,X)$ is
 an injective object; all projective (respectively, injective) objects
 are of these forms (for more details, see \cite{AR74,AR78})

 Let $A\lmod_{\mathscr{P}}$ and $A\lmod_{\mathscr{I}}$ be the full
 subcategories of $A\lmod$ consisting of all modules without
 projective and injective direct summands, respectively.
 Then there is a one-to-one correspondence between indecomposable
 projective (respectively, injective) objects in $(A\stmod)\lmod$
 and indecomposable modules in $A\lmod_{\mathscr{P}}$
 (respectively, $A\lmod_{\mathscr{I}}$).

 \begin{defn}\label{defn:varous stable equivalences}
   Two algebras $A$ and $B$ are \emph{stably equivalent} if $A\stmod$ and
   $B\stmod$ are equivalent as additive categories; equivalently,
   $(A\stmod)\lmod$ and $(B\stmod)\lmod$ are equivalent as abelian categories.
\end{defn}

 The main complication when studying invariance of homological dimensions
 under stable equivalences, comes from a particular class of modules called
 nodes:

\begin{defn}\label{defn:nodes in stable equivalences}
   An indecomposable $A$-module $S$ is called a
 \emph{node} if it is neither projective nor injective, and there
  is an almost split sequence $0\to S\to P\to T\to 0$ with $P$
   a projective $A$-module.
 \end{defn}
 A node $S$ must be simple, see, for example, \cite[V.Theorem 3.3]{ARS95}.
A node does not occur as a composition factor of $\rad(A)/\soc(A)$.

 Two stably equivalent algebras without nodes and without
 semi-simple summands share many homological invariants, such as
 stable Grothendieck groups, global dimensions and dominant dimensions
(see \cite{MV90}). Such stable equivalences do preserve projective dimensions.
 However, if one algebra has a node, simple examples of stable equivalences
 already show that projective and global dimensions are not preserved.
\medskip

Self-injective algebras with nodes are quite well-known (see \cite{ARS95}):
When a self-injective connected algebra $A$ has a node $S$, there exists
an indecomposable projective (and injective) $A$-module whose radical is
$S$. Moreover, the Loewy length of $A$ must be two and $A$ must be a
Nakayama algebra. The non-projective indecomposable $A$-modules are all simple,
and each simple $A$-module is a node satisfying $\D \Tr (S) \cong \Omega_A(S)$
and $\Omega^m_A(S) \cong S$ for some positive integer $m$. This justifies
the following notation:

\begin{defn}
Let $A$ be a self-injective algebra with nodes.
Then $\rho(A)$ denotes the smallest positive integer $m$ such that
there exists a node $_AS$ and an isomorphism $S\cong \Omega^m_A(S)$.
\end{defn}

We will use various correspondences between
objects in two equivalent stable categories and in related categories.

Let $F: A\stmod\lra B\stmod$ be an equivalence of additive categories.
It induces
quasi-inverse equivalences of abelian categories:
$$\alpha: (A\stmod)\lmod \lraf{\cong} (B\stmod)\lmod\;\;\mbox{and}\;\;\beta: (B\stmod)\lmod \lraf{\cong}(A\stmod)\lmod.$$
Moreover, there are one-to-one correspondences
 $$
 \tilde{\alpha}:  A\lmod_{\mathscr{P}}\longleftrightarrow B\lmod_{\mathscr{P}}: \tilde{\beta}\quad \quad\mbox{and}\quad\quad
 \alpha': A\lmod_{\mathscr{I}}\longleftrightarrow B\lmod_{\mathscr{I}}:\beta'
 $$
such that
 \begin{align*}
   &\alpha(\underline{\Hom}_A(-,X))\cong \underline{\Hom}_B(-, \tilde{\alpha}(X))\quad \quad\mbox{and}\quad\quad
   \alpha(\Ext^1_A(-,Y)) \cong \Ext^1_B(-,\alpha'(Y)), \\
   &\beta(\underline{\Hom}_B(-,M))\cong \underline{\Hom}_A(-, \tilde{\beta}(M))\quad \quad\mbox{and}\quad\quad
   \beta(\Ext^1_B(-,N))\cong \Ext^1_A(-,\beta'(N))
 \end{align*}
where  $X\in A\lmod_{\mathscr{P}}, Y\in A\lmod_{\mathscr{I}}, M\in
 B\lmod_{\mathscr{P}}$ and $N\in B\lmod_{\mathscr{I}}$. When applied to
$A$-modules, $F$ and $\tilde{\alpha}$ coincide.

 Formally, we set $\tilde{\alpha}(P)=0$ and $\alpha'(I)=0$ when
 $_AP$ is projective and $_AI$ is injective. Readers familiar with the
 Auslander Reiten translation $\D\!\Tr$, may observe that the correpondences
 are related by $\mathrm{DTr}\tilde{\alpha}\cong \alpha'\mathrm{DTr}$.

 When working with generators, the following correspondences will be used:
 \begin{align*}
 \Phi\!\!: & A\lmod \to B\lmod\qquad U\mapsto \tilde{\alpha}(U)\oplus\bigoplus_{Q\in\mathscr{P}_B}Q,\\
 \Psi\!\!: & B\lmod\to A\lmod\qquad  V\mapsto \tilde{\beta}(V)\oplus\bigoplus_{P\in\mathscr{P}_A}P.
 \end{align*}

 Moreover, if $X$ is indecomposable, not projective, not injective
 and not a node, then $\tilde{\alpha}(X)\cong\alpha'(X)$ (see
 \cite[Lemma 3.4]{AR78}).
\medskip

A particular set of nodes will turn out to play a
crucial role (compare \cite[Section 3]{G04}):

\begin{defn}
Let $\mathfrak{n}_F(A)$ be the set of isomorphism classes
of indecomposable, neither projective nor injective $A$-modules $U$
 such that $\tilde{\alpha}(U)\ncong \alpha'(U)$.
\end{defn}

By definition, $\mathfrak{n}_F(A)$ depends on the given equivalence $F$. In
this situation, $\mathfrak{n}_{F^{-1}}(B)$ denotes the set of isomorphism classes
of indecomposable, neither projective nor injective $B$-modules $V$
such that $\tilde{\beta}(V)\ncong \beta'(V)$.

The elements of $\mathfrak{n}_F(A)$ are nodes; in
general, $\mathfrak{n}_F(A)$ is a proper subset of the set of isomorphism
classes of nodes. It can be empty although there are nodes.

 \subsection{Invariance under general stable equivalences and
under triangulated stable equivalences}
 We state the main result and briefly outline its proof, which will occupy the
 rest of this section. We also provide examples showing that
 the assumptions are necessary and algebras with nodes really behave
 differently.

 \begin{thm}\label{thm:rigidity dimension is invariant under stable equivalence}
   (I) Let $A$ and $B$ be stably equivalent algebras.
   Suppose $A$ has no nodes. Then $\cf(A)\leq \cf(B)$. In particular,
   if neither $A$ nor $B$ has nodes, then $\cf(A)=\cf(B)$. \smallskip \\
(II) More precisely, let $A$ and $B$ be stably equivalent by an equivalence $F$.
   If $\mathfrak{n}_F(A)$ is empty, then $\cf(A)\leq \cf(B)$. In particular,
   if both $\mathfrak{n}_F(A)$ and $\mathfrak{n}_{F^{-1}}(B)$ are empty, then $\cf(A)=\cf(B)$.
 \end{thm}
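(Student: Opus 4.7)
My plan is to reduce part~(I) to part~(II), using the observation (noted just before the theorem) that every element of $\mathfrak{n}_F(A)$ is a node of $A$: if $A$ has no nodes, then $\mathfrak{n}_F(A)=\emptyset$ automatically for any stable equivalence $F$, so part~(I) follows by applying part~(II) to $F$ and symmetrically to $F^{-1}$.

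For part~(II), suppose $\mathfrak{n}_F(A)=\emptyset$. Given any generator-cogenerator $M$ of $A$ with $\gdim\End_A(M)<\infty$, I plan to set
\[
N\;:=\;\Phi(M)\;=\;\tilde{\alpha}(M)\oplus\bigoplus_{Q\in\mathscr{P}_B}Q
\]
and to establish: \textbf{(a)} $N$ is a generator-cogenerator of $B$; \textbf{(b)} $\evd(_BN)\geq\evd(_AM)$; \textbf{(c)} $\gdim\End_B(N)<\infty$. Combined via M\"uller's Theorem~\ref{thm:Muller characterization}, these yield $\domdim\End_B(N)\geq\domdim\End_A(M)$, hence $\cf(B)\geq\cf(A)$, and running the symmetric argument under $\mathfrak{n}_{F^{-1}}(B)=\emptyset$ gives the reverse inequality.

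For~(a), $N$ is a generator since $B\subseteq N$. For the cogenerator property, I would use the following hypothesis-free fact: in the functor category $(A\stmod)\lmod$, indecomposable projective-injective objects correspond precisely to indecomposable injective non-projective $A$-modules via $X\mapsto\underline{\Hom}_A(-,X)$, and the abelian equivalence $\alpha$ preserves projective-injective objects; so $\tilde{\alpha}$ restricts to a bijection between indecomposable injective non-projective $A$- and $B$-modules. Since $\D(A)\in\add(M)$ contributes all such $A$-summands, the corresponding indecomposable injective non-projective $B$-modules lie in $\add(\tilde{\alpha}(M))$; combined with the projective-injective part already in $\bigoplus_Q Q$, this forces $\D(B)\in\add(N)$. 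For~(b), the key ingredient is the Auslander--Reiten isomorphism
\[
\Ext^{i}_A(X,Y)\;\cong\;\Ext^{i}_B(\tilde{\alpha}(X),\alpha'(Y))\qquad(X\in A\lmod_{\mathscr{P}},\ Y\in A\lmod_{\mathscr{I}},\ i\geq 1),
\]
obtained by applying $\alpha$ to a functor-category presentation of $\Ext^i_A(X,Y)$. Under $\mathfrak{n}_F(A)=\emptyset$, $\alpha'(Y)\cong\tilde{\alpha}(Y)$ for non-projective non-injective $Y$; and dually to~(a), $\alpha'$ restricts to a bijection between indecomposable projective non-injective modules, so $\alpha'(A_{np})\in\add(B)$, where $A_{np}$ denotes the projective non-injective part of $A$. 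Decomposing $M=A\oplus\D(A)\oplus M'$ and discarding summands with projective first argument or injective second argument in $\Ext^i_A(M,M)$, these identifications transport the Ext-vanishing intact from $M$ to $N$.

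The main obstacle is step~(c): preserving finiteness of the global dimension of the endomorphism ring. The plan is to invoke Auslander's characterisation of $\gdim\End_A(M)$ for a generator-cogenerator $M$ in terms of bounded $\add(M)$-resolutions and coresolutions of every $A$-module, and then to transport such (co)resolutions via $\tilde{\alpha}$ and $\alpha'$. The node-free hypothesis guarantees that these correspondences respect short exact sequences between non-projective non-injective modules, so $\add(M)$-(co)resolutions translate into $\add(N)$-(co)resolutions of the same length, up to an explicit correction accounting for projective-injective summands. Controlling this correction is the technical heart of the argument, and explains the indispensability of the node-free hypothesis: without it, syzygy-shifts associated to nodes can render the transported (co)resolutions unbounded, a phenomenon consistent with the examples later in the section where $\cf$ strictly changes under stable equivalence in the presence of nodes.
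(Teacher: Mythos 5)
Your overall skeleton coincides with the paper's proof: reduce (I) to (II) by observing $\mathfrak{n}_F(A)$ consists of nodes, then push a generator-cogenerator $M$ through $\Phi$ and show that the cogenerator property, rigidity degree, and finiteness of global dimension survive. The paper packages this in Lemma~\ref{lem:refinement on bijection in stable equivalences} (which shows $\Phi(M)\in\mathcal{GCN}_{F^{-1}}(B)$, hence a cogenerator), Lemma~\ref{cor:rep dim is invariant under stable equivalences} (Guo's \cite[Lemma 3.5]{G04}, giving $\gdim\End_A(M)=\gdim\End_B(\Phi(M))$), and Proposition~\ref{prop:g5} ($\domdim\End_A(M)=\domdim\End_B(\Phi(M))$).

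However, two of your intermediate claims are not correct as stated under the hypothesis $\mathfrak{n}_F(A)=\emptyset$ alone. First, in step~(a) you assert that $\tilde{\alpha}$ restricts to a bijection between indecomposable injective non-projective $A$-modules and the analogous $B$-modules. This fails when $\mathfrak{n}_{F^{-1}}(B)\neq\emptyset$: $\tilde{\alpha}$ is a bijection $\bigtriangledown_A\to\bigtriangledown_B$, and with $\mathfrak{n}_F(A)=\emptyset$ the source is $\mathscr{I}_A\setminus\mathscr{P}_A$ but the target $\bigtriangledown_B=\mathfrak{n}_{F^{-1}}(B)\,\dot{\cup}\,(\mathscr{I}_B\setminus\mathscr{P}_B)$ can be strictly larger, so some injective non-projective $A$-modules go to nodes of $B$. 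The correct (and sufficient) statement is that each $J\in\mathscr{I}_B\setminus\mathscr{P}_B\subseteq\bigtriangledown_B$ has a preimage in $\mathscr{I}_A\setminus\mathscr{P}_A\subseteq\add(M)$, which is how the paper gets $\D(B)\in\add(\Phi(M))$ via Lemma~\ref{lem:refinement on bijection in stable equivalences}(2). Second, in step~(b) the asserted isomorphism $\Ext^i_A(X,Y)\cong\Ext^i_B(\tilde{\alpha}(X),\alpha'(Y))$ for all $i\geq 1$ is \emph{not} unconditional under $\mathfrak{n}_F(A)=\emptyset$ alone; for $i\geq 2$ one must control the $\bigtriangleup$-components of syzygies, and the syzygy-comparison (Lemma~\ref{lem:g4}) leaves a residue in $\add(\bigtriangledown_B)$ coming precisely from possible nodes on the $B$-side. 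The paper's Lemma~\ref{lem:extention}(2) therefore carries a hypothesis that $\Ext^j_B(N,\Phi(Y))=0$ for $N\in\bigtriangledown_B$ in the relevant range; Proposition~\ref{prop:g5} verifies this hypothesis by an argument that is effectively a bootstrap (it goes from the $B$-side rigidity to the $A$-side, using that $\bigtriangledown_B\subseteq\add(\Phi(M))$ so the required vanishing is part of what one already controls). Your direction ($A$-side to $B$-side) works too, but only by an induction on $i$ in which the degree-$j$ vanishing on the $B$-side for $j<i$ supplies the hypothesis needed at degree $i$; asserting the isomorphism outright skips this. Finally, step~(c) is only a plan; the argument you sketch is essentially Guo's proof, which the paper simply cites.

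So the approach matches, but to turn this into a proof you need to replace the unconditional Ext-isomorphism by the conditional version and the accompanying induction, correct the bijection claim in~(a) (or just invoke Lemma~\ref{lem:refinement on bijection in stable equivalences}(2)), and either prove or cite the invariance of $\gdim\End$ for generator-cogenerators containing all relevant nodes.
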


Statement (II) implies (I), since $\mathfrak{n}_F(A)$ is contained in the set
of nodes. Statement (I) may, however, be easier to apply, since it uses a
property of the algebra, not of the given equivalence. By (I), the rigidity
dimension of an algebra without nodes is minimal in its
stable equivalence class.
\smallskip

The stable module category of a self-injective algebra carries additional
structure; it is a triangulated category (see \cite{Happel,ARS95}). A
stable equivalence between self-injective algebras may be a triangulated
equivalence or just an additive equivalence.
It turns out that triangulated
equivalences do respect rigidity dimension even when there are nodes. Also,
in the additive case, more can be said when the algebras are self-injective.

\begin{thm}\label{Stable equivalences of self-injective algebras}
   Let $A$ and $B$ be stably equivalent self-injective algebras. Then:
\\
   (I)  $\cf(A)=\cf(B)$ in each of
   the following three cases:

    \quad $(1)$ $A$ has no nodes.

    \quad $(2)$ $A$ and $B$ are symmetric algebras.

    \quad $(3)$ $A\stmod$ and $B\stmod$ are equivalent as triangulated categories.
    \smallskip
\\
 (II) If both $A$ and $B$ have nodes, then
$$|\cf(A)-\cf(B)|\leq |\rho(A)-\rho(B)|.$$
In particular,
$\cf(A)<\infty $ if and only if $\cf(B)<\infty$.
\end{thm}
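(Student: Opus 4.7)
The plan is to treat the three subcases of part (I) by separate arguments that each exploit a different feature of the self-injective setting, and then to reduce part (II) to controlling how the correspondences $\tilde{\alpha}$ and $\alpha'$ desynchronise on node orbits of possibly different periods.

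For case (I)(3), a triangle equivalence $F\colon A\stmod\to B\stmod$ commutes with $\Sigma=\Omega_A^{-1}$, so for any $A$-module $N$ without projective summands and every $i\geq 1$,
\[
\Ext^i_A(N,N)\;\cong\;\underline{\Hom}_A(N,\Sigma^iN)\;\cong\;\underline{\Hom}_B\bigl(F(N),\Sigma^iF(N)\bigr)\;\cong\;\Ext^i_B\bigl(F(N),F(N)\bigr).
\]
Since $A$ is self-injective, every generator-cogenerator has the form $M=A\oplus N$ with $N\in A\lmod_{\mathscr{P}}$, and the matching module on the $B$-side is $M'=B\oplus F(N)$; their rigidity degrees then agree. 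To compare $\gdim\End_A(M)$ with $\gdim\End_B(M')$, I would transport almost split sequences through $F$ so as to identify projective resolutions in $(A\stmod)\lmod$ and $(B\stmod)\lmod$. Taking suprema yields $\cf(A)=\cf(B)$.

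For case (I)(1), Theorem \ref{thm:rigidity dimension is invariant under stable equivalence}(I) gives $\cf(A)\leq\cf(B)$; for the reverse inequality I use the standard structural fact that a connected self-injective algebra has nodes if and only if it is a Nakayama algebra of Loewy length two, a property recognised inside the stable category (all non-projective indecomposables are simple and $\Omega$-periodic, with AR-quiver a disjoint union of homogeneous tubes). Hence $B$ is also nodeless and the theorem applies in the opposite direction. For case (I)(2), if both symmetric algebras are nodeless then (1) applies. If both have nodes, then the defining AR sequence of a node $S$ forces $\nu_A(S)\cong\Omega_A^{-1}(S)$ in $A\stmod$; since $A$ is symmetric, $\nu_A=\mathrm{id}$, hence $\Omega_A(S)\cong S$, so $\rho(A)=\rho(B)=1$, and part (II) yields $\cf(A)=\cf(B)$. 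The mixed situation---one symmetric algebra with nodes and one without---is excluded by the same stable-category detection of the Nakayama-Loewy-length-two structure as in (1).

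For part (II), assume both algebras have nodes. I would rewrite $\Ext^i_A(N,N)\cong\underline{\Hom}_A(N,\Omega_A^{-i}N)$, transport along $F$, and compare $F\Omega_A^{-i}N$ with $\Omega_B^{-i}F(N)$. Outside $\mathfrak{n}_F(A)$ the correspondences $\tilde{\alpha}$ and $\alpha'$ agree, so no shift is introduced; on node orbits they differ by one application of the auto-equivalence $\nu\Omega$, which acts trivially modulo the period $\rho$. Iterating $i$ times therefore produces a discrepancy bounded by $|\rho(A)-\rho(B)|$ in the $\Omega$-grading along node orbits, and translating this back into rigidity degrees gives $\evd(_BM')\geq\evd(_AM)-|\rho(A)-\rho(B)|$ together with the symmetric inequality. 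Passing to suprema proves $|\cf(A)-\cf(B)|\leq|\rho(A)-\rho(B)|$, and the finiteness equivalence is immediate. The main obstacle is precisely this bookkeeping---synchronising $\Omega$-gradings on node orbits of possibly different periods, while keeping track of which indecomposables lie in $\mathfrak{n}_F(A)$ as opposed to $\mathfrak{n}_{F^{-1}}(B)$---since elsewhere the argument reduces cleanly to the nodeless Theorem \ref{thm:rigidity dimension is invariant under stable equivalence}.
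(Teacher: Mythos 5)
Your overall strategy touches several correct ideas (stable-category detection of nodes for (I)(1); the observation that $\nu_A(S)\cong\Omega_A^{-1}(S)$ from the AR sequence of a node, giving $\rho=1$ for a symmetric algebra; using commutation of the equivalence with $\Omega$ for (I)(3)), and your derivation of (I)(2) via $\rho(A)=\rho(B)=1$ is a valid alternative to the paper's appeal to \cite[X.1: Prop.\ 1.12]{ARS95}. However, there are two genuine gaps.

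First, in (I)(3) you compare rigidity degrees correctly, but the comparison of global dimensions is only gestured at (``I would transport almost split sequences''). The definition of $\cf$ also requires matching up which generator-cogenerators have endomorphism rings of \emph{finite} global dimension, and this is exactly where nodes cause trouble. The clean route---which the paper takes---is to observe that commutation of $\tilde\alpha$ with $\Omega$ forces $\mathfrak{n}_F(A)=\emptyset$, so that Theorem \ref{thm:rigidity dimension is invariant under stable equivalence}(II) (which already handles both $\gdim$ via Guo's Lemma \ref{cor:rep dim is invariant under stable equivalences} and $\domdim$ via Proposition \ref{prop:g5}) applies verbatim. Your sketch does not supply this, nor an equivalent.

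Second, and more seriously, your argument for (II) is not a proof. Tracking an ``$\Omega$-grading discrepancy'' along node orbits and asserting it translates into $\evd(_BM')\geq\evd(_AM)-|\rho(A)-\rho(B)|$ is heuristics, not a deduction: the quantity $|\rho(A)-\rho(B)|$ does not arise as an accumulating error term per syzygy step, and no argument is given that $\gdim$ of the matching endomorphism ring remains finite. The paper instead proves (II) structurally: decompose $A=A_1\times A_2$ and $B=B_1\times B_2$ into blocks with and without nodes; by \cite[X.1: Prop.\ 1.8]{ARS95} the node-blocks $A_1,B_1$ are products of indecomposable Nakayama self-injective algebras with $\rad^2=0$; compute $\cf$ there explicitly via Proposition \ref{Nakayama self-injective algebra with radical square zero} as $\rho+1$; use (I)(1) on the nodeless parts $A_2,B_2$; and finally use $\cf(A)=\min\{\rho(A)+1,\cf(A_2)\}$ from Proposition \ref{prop:rigidity dimension blockwise}. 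The two missing ideas in your sketch are precisely this block decomposition and the explicit formula $\cf(\Lambda)=\rho(\Lambda)+1$ for the node-blocks.
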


 In the proof of the two theorems,
we will have to control what happens to generator-cogenerators
 and their endomorphism algebras, under stable equivalences. In every step,
 nodes will cause problems. Therefore, in the first part of the proof, we will
 have to use various correspondences between objects in the stable
 categories and also in the functor categories. These correspondences are
 compatible with Guo's results \cite{G04}, which thus can be used to compare
 global dimensions of endomorphism rings. The core of the proof then is to
 compare also dominant dimensions, which needs another technically involved
 subsection. Finally, everything can be put together to derive Theorem
 \ref{thm:rigidity dimension is invariant under stable equivalence}. To prove
Theorem \ref{Stable equivalences of self-injective algebras} we will in
addition use the description of self-injective algebras with nodes in
\cite{ARS95}.
\medskip

In general, that is in the presence of nodes,
   stable equivalences do not preserve rigidity dimensions,
 as the following examples illustrate.

 \begin{example}
(a)  Let $B$ be the $k$-algebra given by the quiver
 $\xymatrix{1 \ar@/^/[r]^\alpha & 2 \ar@/^/[l]^\beta}$
 with relations $\{\alpha\beta, \beta\alpha\}$. Then $B$ is self-injective with
 radical square zero.
 $B$ is stably equivalent to $A:=T_2(k)\times T_2(k)$, where
 $T_2(k):=\begin{pmatrix}
              k  &0\\
              k  & k
 \end{pmatrix}$.
 The algebra $B$ has two nodes, but $A$ has no nodes. The rigidity dimensions of
 $B$ and $A$ can be calculated as follows.

 Since $T_2(k)$ is hereditary,  $\cf(T_2(k))=2$ by Theorem \ref{injective}.
 Together with Proposition \ref{prop:rigidity dimension blockwise}, $\cf(A)=\cf(T_2(k))=2$.
 To calculate $\cf(B)$, let $S_1$ and $S_2$ denote the simple $B$-modules corresponding
 to the vertices $1$ and $2$, respectively. Note that $B$ has only four basic
generator-cogenerators (up to isomorphism):  $B$, $B\oplus S_1, B\oplus S_2$
and $B\oplus S_1\oplus S_2$.
 Moreover the endomorphism algebras, except for $B$ itself,
have finite global dimension;
 $\domdim\End_B(B\oplus S_1)= 3=\domdim\End_B(B\oplus S_2)$, but $\domdim\End_B(B\oplus S_1\oplus S_2) = 2$. Thus $\cf(B)=3>\cf(A)$, illustrating that the
 inequality in Theorem \ref{thm:rigidity dimension is invariant under stable equivalence} cannot be improved. In this example, the set
$\mathcal{GCN}_{F^{-1}}(B)$, to be introduced in the next subsection,
consists only of the Auslander generator
 $B\oplus S_1\oplus S_2$ since $B$ has two nodes $S_1$ and $S_2$.

 (b) Now, we give an example showing that two stably equivalent self-injective algebras may have different rigidity dimensions. Let $B$ be as in (a) and let
 $C:=k[x]/(x^2)\times k[x]/(x^2)$. This is a self-injective algebra.
 To check that $B$ and $C$ are stably equivalent it is enough to count
 indecomposable objects in $B\stmod$ and in $C\stmod$.
 But $C$ has nodes and $\cf(C)=2\neq \cf(B)$. As the stable category of a Frobenius category is triangulated, both $B\stmod$ and $C\stmod$ are triangulated categories. However, the shift functor of $B\stmod$ permutes simple modules, while the shift functor of $C\stmod$ is the identity. This means that $B\stmod$ and $C\stmod$ are equivalent as additive categories, but not as triangulated categories.
\end{example}

\subsection{Further preparations for the proofs}
Throughout this subsection, $A$ and $B$ are stably equivalent algebras,
possibly with nodes. We continue setting up correspondences between
objects in the two stable categories and in related categories.
Let $F: A\stmod\lra B\stmod$ be an equivalence of additive categories.

 Denote by $\mathscr{P}_A$ and $\mathscr{I}_A$ the set of isomorphism
 classes of indecomposable projective and injective $A$-modules, respectively.
  Further, set
 $$
 \bigtriangleup_A:=\mathfrak{n}_F(A)\dot{\cup}(\mathscr{P}_A\setminus\mathscr{I}_A)
 \quad\mbox{and}\quad \bigtriangledown_A:=\mathfrak{n}_F(A)\dot{\cup}(\mathscr{I}_A\setminus\mathscr{P}_A).
 $$
Let $\bigtriangleup_A^c$ be the class of indecomposable, non-injective
 $A$-modules which do not belong to $\bigtriangleup_A$.
Then each module $Y\in A\lmod_{\mathscr{I}}$ admits a unique decomposition (up to isomorphism)
 $$Y\cong Y_\triangle\oplus Y'$$ with
 $Y_\triangle\in\add(\bigtriangleup_A)$ and $Y'\in\add(\bigtriangleup_A^c)$.
 The module $Y_\triangle$ is called the \emph{$\bigtriangleup_A$-component} of $Y$.

 In the following, we denote by $\mathcal{GCN}_F(A)$
the class of basic $A$-modules $X$
 which are generator-cogenerators with $\mathfrak{n}_F(A)\subseteq \add(X)$.
 In particular, if $A$ has no nodes, then $\mathcal{GCN}_F(A)$ is exactly
 the class of basic generator-cogenerators for $A\lmod$. In the same
situation, we similarly use the notation $\mathcal{GCN}_{F^{-1}}(B)$.

Here are some results from \cite{G04} for later use.

 \begin{lem}\textrm{\bf(compare \cite[Lemmas 3.1 and 3.2]{G04})} \label{lem:refinement on bijection in stable equivalences}

 $(1)$ There are one-to-one correspondences
 $$
 \tilde{\alpha}: \bigtriangledown_A\longleftrightarrow \bigtriangledown_B:\tilde{\beta},
 \quad \alpha':  \bigtriangleup_A\longleftrightarrow \bigtriangleup_B: \beta'
 \quad\mbox{and}\quad
 \alpha':\bigtriangleup_A^c\longleftrightarrow \bigtriangleup_B^c: \beta'.
 $$

$(2)$ The correspondences $\Phi$ and $\Psi$
 restrict to one-to-one correspondences between
 $\mathcal{GCN}_F(A)$ and $ \mathcal{GCN}_{F^{-1}}(B)$.
 Moreover, if $X\in \mathcal{GCN}_F(A)$, then $\Phi(X)\cong \alpha'(X)
 \oplus \bigoplus_{I\in\mathscr{I}_B}I$.

 \end{lem}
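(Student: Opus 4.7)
My plan is to prove part (1) by verifying each of the three claimed bijections directly on indecomposables, and to deduce part (2) as a bookkeeping consequence by decomposing a basic generator-cogenerator into its indecomposable summands.

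For the bijection $\tilde\alpha:\bigtriangledown_A\to\bigtriangledown_B$ in (1), each $U\in\bigtriangledown_A$ is non-projective, so $\tilde\alpha(U)$ is a well-defined indecomposable non-projective $B$-module. The content is that $\tilde\alpha(U)\in\bigtriangledown_B$, i.e., $\tilde\alpha(U)$ is either injective or lies in $\mathfrak{n}_{F^{-1}}(B)$. Suppose to the contrary that $\tilde\alpha(U)$ is non-injective and not in $\mathfrak{n}_{F^{-1}}(B)$; then by definition $\tilde\beta(\tilde\alpha(U))\cong\beta'(\tilde\alpha(U))$, and since $\tilde\beta\tilde\alpha\cong\mathrm{id}$ this gives $\beta'(\tilde\alpha(U))\cong U$. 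If $U\in\mathscr{I}_A\setminus\mathscr{P}_A$, the left side lies in $A\lmod_{\mathscr{I}}$ and cannot be isomorphic to the nonzero injective $U$, contradiction. If $U\in\mathfrak{n}_F(A)$, applying $\alpha'$ and using $\alpha'\beta'\cong\mathrm{id}$ on the non-injective module $\tilde\alpha(U)$ yields $\tilde\alpha(U)\cong\alpha'(U)$, again contradicting $U\in\mathfrak{n}_F(A)$. Thus $\tilde\alpha(\bigtriangledown_A)\subseteq\bigtriangledown_B$; the symmetric inclusion for $\tilde\beta$ combined with $\tilde\beta\tilde\alpha\cong\mathrm{id}$ produces the desired bijection. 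The two bijections involving $\alpha'$ are handled by the dual argument, exchanging the roles of projectivity and injectivity; for the restriction $\alpha':\bigtriangleup_A^c\to\bigtriangleup_B^c$, the essential input is the identity $\tilde\alpha\cong\alpha'$ on non-nodes, which holds by the very definition of $\mathfrak{n}_F(A)$.

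For (2), decompose $X\in\mathcal{GCN}_F(A)$ in basic form as $X=P\oplus I'\oplus N\oplus X_0$, where $P$ sums all indecomposable projectives, $I'$ sums all indecomposable injective non-projectives, $N=\bigoplus_{U\in\mathfrak{n}_F(A)}U$, and $X_0$ collects the remaining (non-projective, non-injective, non-node) summands. By (1), $\tilde\alpha(I'\oplus N)$ realizes each indecomposable in $\bigtriangledown_B$ exactly once, while $\tilde\alpha(X_0)\cong\alpha'(X_0)\in\add(\bigtriangleup_B^c)$. Adding $\bigoplus_{Q\in\mathscr{P}_B}Q$ in $\Phi(X)$ then supplies every indecomposable projective of $B$ and, together with the injective non-projective part from $\bigtriangledown_B$, every indecomposable injective of $B$; the remaining part of $\bigtriangledown_B$ is $\mathfrak{n}_{F^{-1}}(B)$. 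Hence $\Phi(X)\in\mathcal{GCN}_{F^{-1}}(B)$. The analogous calculation for $\Psi$, combined with $\tilde\beta\tilde\alpha\cong\mathrm{id}$ and $\tilde\alpha\tilde\beta\cong\mathrm{id}$, makes $\Phi$ and $\Psi$ mutually inverse on these classes. Finally, since $\alpha'$ kills injective summands and the bijection $\alpha':\bigtriangleup_A\to\bigtriangleup_B$ converts the projective non-injective part of $P$ together with $N$ into a module realizing $\bigtriangleup_B$ once, while $\alpha'(X_0)\cong\tilde\alpha(X_0)$, adjoining $\bigoplus_{I\in\mathscr{I}_B}I$ to $\alpha'(X)$ fills in precisely the projective-injective and injective non-projective $B$-modules, yielding the same basic decomposition as $\Phi(X)$ and thus $\Phi(X)\cong\alpha'(X)\oplus\bigoplus_{I\in\mathscr{I}_B}I$.

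The principal obstacle is that under $\tilde\alpha$ a node in $\mathfrak{n}_F(A)$ may be carried to an injective non-projective $B$-module, and vice versa; the individual constituents of $\bigtriangleup_A$ and $\bigtriangledown_A$ are not preserved, only the unions themselves. This is exactly why the exceptional class $\mathfrak{n}_F$ must be packaged together with both the projective non-injectives and the injective non-projectives, and the crux of the whole argument is the contradiction step in (1), which hinges on the defining property of $\mathfrak{n}_F$ together with $\alpha'\beta'\cong\mathrm{id}$ on $B\lmod_{\mathscr{I}}$.
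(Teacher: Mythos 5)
Your proof is correct and covers the same ground as the paper's, but is more self-contained. For part (1), the paper cites \cite[Lemma 3.1]{G04} for the bulk of the bijections and only supplies the one missing step, namely that $X\in\mathfrak{n}_F(A)$ forces $\alpha'(X)\in\bigtriangleup_B$, via precisely the contradiction you give (use $\beta'\alpha'\cong\mathrm{id}$ on $B\lmod_{\mathscr{I}}$ and $\tilde\alpha\tilde\beta\cong\mathrm{id}$ on $B\lmod_{\mathscr{P}}$ to force $\tilde\alpha(X)\cong\alpha'(X)$, contradicting $X\in\mathfrak{n}_F(A)$). You re-derive the remaining cases from scratch, making explicit how the ``no injective summands'' constraint on $\beta'$ (and its dual for $\tilde\beta$) rules out $U\in\mathscr{I}_A\setminus\mathscr{P}_A$ (respectively $U\in\mathscr{P}_A\setminus\mathscr{I}_A$). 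One redundancy worth noting: once $\alpha'\colon\bigtriangleup_A\leftrightarrow\bigtriangleup_B$ is established, the bijection $\alpha'\colon\bigtriangleup_A^c\leftrightarrow\bigtriangleup_B^c$ follows automatically by complementation, since $\alpha'$ is already a bijection on all indecomposable objects of $A\lmod_{\mathscr{I}}$; your separate argument for it is fine but not needed. For part (2), the paper compresses the verification to ``$(2)$ follows from $(1)$''; your explicit basic decomposition $X\cong P\oplus I'\oplus N\oplus X_0$ and the check that both $\Phi(X)$ and $\alpha'(X)\oplus\bigoplus_{I\in\mathscr{I}_B}I$ realize each of $\mathfrak{n}_{F^{-1}}(B)$, $\mathscr{P}_B\cup\mathscr{I}_B$, and $\tilde\alpha(X_0)\cong\alpha'(X_0)$ exactly once is the right way to unpack that remark. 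A small terminological slip: you twice write ``non-node'' where the hypothesis you actually use is ``not in $\mathfrak{n}_F(A)$''; the latter can be strictly weaker, since $\mathfrak{n}_F(A)$ may be a proper subset of the set of nodes, but it is exactly the right condition for $\tilde\alpha(U)\cong\alpha'(U)$ (this is the definition of $\mathfrak{n}_F$), so your logic is unaffected.
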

 \begin{proof}
 $(1)$ By \cite[Lemma 3.1]{G04}, we only need to show that
 $X\in \mathfrak{n}_F(A)$ implies $\alpha'(X)\in\bigtriangleup_B$.

 Let $X\in \mathfrak{n}_F(A)$. Then $X$ is neither projective nor injective.
 Consequently,  $\alpha'(X)$ is not injective.
 Recall that
 $\tilde{\beta}(V)\cong \beta'(V)$ for any $V\in\bigtriangleup_B^c$.
 Suppose $\alpha'(X)\in\bigtriangleup_B^c$.
Then $\alpha'(X)$  is not projective and
 $\tilde{\beta}(\alpha'(X))\cong \beta'(\alpha'(X))\cong X$. It follows that  $\alpha'(X)\cong\tilde{\alpha}\tilde{\beta}(\alpha'(X))\cong\tilde{\alpha}(X)$.
 This is a contradiction since $X\in \mathfrak{n}_F(A)$. Thus $\alpha '(X)\in\bigtriangleup_B$.

Note that $\tilde{\alpha}(U)\cong\alpha'(U)$ for any $U\in \bigtriangleup_A^c$. Now, $(2)$ follows from $(1)$.
 \end{proof}


 Generator-cogenerators in $\mathcal{GCN}_F(A)$ are better to control under
stable  equivalences. In particular, global dimensions of their
 endomorphism algebras are preserved under stable equivalences:

 \begin{lem}\textrm{\bf (\cite[Lemma 3.5]{G04})}\label{cor:rep dim is invariant under stable equivalences}
 If $X\in\mathcal{GCN}_F(A)$, then
 $$\gdim\End_A(X)=\gdim \End_B(\Phi(X)).$$
 \end{lem}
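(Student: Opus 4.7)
The plan is to characterise $\gdim\End_A(M)$ of a generator-cogenerator $M$ via resolution data inside $A\lmod$, and then transfer this data along the stable equivalence $F$. By Lemma \ref{lem:refinement on bijection in stable equivalences}(2), $\Phi$ and $\Psi$ are mutually inverse bijections between (isomorphism classes of basic objects in) $\mathcal{GCN}_F(A)$ and $\mathcal{GCN}_{F^{-1}}(B)$, and in particular $\Phi(X)\cong\alpha'(X)\oplus\bigoplus_{I\in\mathscr{I}_B}I$ is a basic generator-cogenerator of $B\lmod$. Hence the situation is symmetric in $A$ and $B$, and it will suffice to prove one inequality, say $\gdim\End_A(X)\leq\gdim\End_B(\Phi(X))$.

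The input from Auslander's theory is that, for a generator $M$ of $A\lmod$, the projective $\End_A(M)$-modules are precisely the $\Hom_A(M,N)$ with $N\in\add(M)$, and $\pdim_{\End_A(M)}\Hom_A(M,Y)$ is computed as the shortest length of an exact sequence $0\to M_n\to\cdots\to M_0\to Y\to 0$ with $M_i\in\add(M)$ that remains exact after applying $\Hom_A(M,-)$. Since every $\End_A(M)$-module admits a projective presentation induced by an $A$-linear map between objects of $\add(M)$, this identifies $\gdim\End_A(M)$ with the supremum of these minimal $\add(M)$-resolution lengths over $Y\in A\lmod$. Starting from such a resolution for some $Y$ realising $\gdim\End_A(X)$, I would break it into short exact sequences, apply $F$ termwise to their non-projective, non-injective parts, and reassemble into short exact sequences in $B\lmod$, correcting by projective and injective summands according to the bijections $\tilde\alpha$ and $\alpha'$ supplied by Lemma \ref{lem:refinement on bijection in stable equivalences}(1). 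The corrected sequences concatenate to an $\add(\Phi(X))$-resolution of the same length, since $\mathscr{P}_B$ and $\mathscr{I}_B$ are contained in $\add(\Phi(X))$.

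The role of the hypothesis $X\in\mathcal{GCN}_F(A)$, namely $\mathfrak{n}_F(A)\subseteq\add(X)$, is decisive: the modules on which $\tilde\alpha$ and $\alpha'$ disagree are exactly those in $\mathfrak{n}_F(A)$, and their inclusion in $\add(X)$ guarantees that each lift of a resolution term to $B\lmod$ lands in $\add(\Phi(X))$, regardless of whether one uses $\tilde\alpha$ (to preserve projective covers) or $\alpha'$ (to preserve injective envelopes). Without this condition, the lift could drop outside $\add(\Phi(X))$, raising the transferred resolution length on the $B$-side.

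The main obstacle will be the careful bookkeeping of the projective and injective summands appearing at each step of the lift of a short exact sequence from the stable category back to the module category, together with the verification that the transferred resolution in $B\lmod$ remains exact after applying $\Hom_B(\Phi(X),-)$. Formally, this amounts to checking that the equivalence of functor categories $\alpha:(A\stmod)\lmod\to(B\stmod)\lmod$ carries the projective resolution of $\Hom_A(X,-)$ evaluated along the given $\add(X)$-resolution to the projective resolution of $\Hom_B(\Phi(X),-)$ along the transferred one. Once the bookkeeping is done, the inequality $\gdim\End_A(X)\leq\gdim\End_B(\Phi(X))$ follows, and the reverse inequality is obtained by applying the same argument to $F^{-1}$ and $\Psi$, yielding the desired equality.
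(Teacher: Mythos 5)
Your proposal reconstructs the strategy Guo uses to prove this in \cite[Lemma 3.5]{G04}, which is exactly what the paper relies on here; the paper does not give its own proof but simply cites Guo. The two essential ingredients you identify are correct: (i) Auslander's characterisation of $\gdim\End_A(M)$ for a generator $M$ in terms of lengths of $\add(M)$-resolutions of arbitrary $A$-modules that remain exact after $\Hom_A(M,-)$, and (ii) lifting these resolutions, short exact sequence by short exact sequence, to $B\lmod$ along the stable equivalence, with the hypothesis $\mathfrak{n}_F(A)\subseteq\add(X)$ ensuring all the transferred terms land in $\add(\Phi(X))$.

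One point of imprecision worth flagging: the mechanism for lifting a non-split short exact sequence from $A\lmod$ to $B\lmod$ and controlling its projective and injective summands is not the object-level correspondence of Lemma \ref{lem:refinement on bijection in stable equivalences}(1) alone, but the exact-sequence lifting result recorded later as Lemma \ref{lem:g3} (extending \cite[Lemma 3.3]{G04}), whose proof uses the functor-category equivalence $\alpha$ to recognise the transferred sequence and to split off the $\bigtriangleup_B$- versus $\bigtriangleup_B^c$-components correctly. Your outline gestures at this (``reassemble into short exact sequences in $B\lmod$'') but does not name the lemma, and the ``careful bookkeeping'' you defer is precisely what that lemma supplies. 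A second minor inaccuracy: the identification of $\gdim\End_A(M)$ with the supremum of $\add(M)$-resolution lengths carries a shift by $2$ in Auslander's formulation, which should be tracked to avoid an off-by-two in the final equality. Neither issue is a conceptual gap; the approach is sound and coincides with the cited source.
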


\subsection{Stable equivalences and dominant dimension
of endomorphism rings}

To compare rigidity dimension under stable equivalences, we need an analogue of
Lemma \ref{cor:rep dim is invariant under stable equivalences} for dominant
dimensions; this is the main part of the proof of Theorem
\ref{thm:rigidity dimension is invariant under stable equivalence}.

 \begin{prop}\label{prop:g5} Let $A$ and $B$ be stably equivalent by an
equivalence $F$.
   If $X\in\mathcal{GNC}(A)$, then
   $$\domdim \End_A(X)=\domdim \End_B(\Phi(X)).$$
 \end{prop}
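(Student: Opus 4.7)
By M\"uller's Theorem \ref{thm:Muller characterization} applied both to $X$ and to $\Phi(X)$ (the latter being a generator-cogenerator for $B\lmod$ by Lemma \ref{lem:refinement on bijection in stable equivalences}(2)), the claim reduces to the equality of rigidity degrees $\evd(_AX)=\evd(_B\Phi(X))$, equivalently to showing that $\Ext_A^i(X,X)=0$ if and only if $\Ext_B^i(\Phi(X),\Phi(X))=0$ for every $i\ge 1$.

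Write $X\cong X^{np}\oplus X^p$ with $X^p$ a maximal projective summand (so $X^{np}\in A\lmod_{\mathscr{P}}$) and $X\cong X^{ni}\oplus X^i$ with $X^i$ a maximal injective summand (so $X^{ni}\in A\lmod_{\mathscr{I}}$). Since $\Ext_A^i(P,-)=0=\Ext_A^i(-,I)$ for $i\ge 1$ whenever $P$ is projective and $I$ injective,
\[
\Ext_A^i(X,X)\cong\Ext_A^i(X^{np},X^{ni})\qquad (i\ge 1),
\]
and analogously for $\Phi(X)$. Here the hypothesis $X\in\mathcal{GCN}_F(A)$ becomes decisive: Lemma \ref{lem:refinement on bijection in stable equivalences}(2) provides \emph{two} descriptions of $\Phi(X)$ simultaneously. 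From $\Phi(X)=\tilde{\alpha}(X)\oplus\bigoplus_{Q\in\mathscr{P}_B}Q$ one reads off $\Phi(X)^{np}\cong\tilde{\alpha}(X^{np})$, and from $\Phi(X)\cong\alpha'(X)\oplus\bigoplus_{I\in\mathscr{I}_B}I$ one reads off $\Phi(X)^{ni}\cong\alpha'(X^{ni})$, so that the non-projective and non-injective parts are governed separately by $\tilde{\alpha}$ and $\alpha'$.

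It therefore suffices to establish that for all $M\in A\lmod_{\mathscr{P}}$, $N\in A\lmod_{\mathscr{I}}$ and $i\ge 1$,
\[
\Ext_A^i(M,N)\cong\Ext_B^i\bigl(\tilde{\alpha}(M),\alpha'(N)\bigr).
\]
The approach is to transport both sides into the functor categories and invoke the standard Auslander-Reiten identification
\[
\Ext_A^i(M,N)\cong \Ext^{i-1}_{(A\stmod)\lmod}\bigl(\underline{\Hom}_A(-,M),\,\Ext_A^1(-,N)\bigr),
\]
obtained from an injective coresolution of $\Ext_A^1(-,N)$ in $(A\stmod)\lmod$ whose terms are controlled by the cosyzygies of $N$ in $A\lmod$. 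Under the equivalence $\alpha:(A\stmod)\lmod\to(B\stmod)\lmod$ the projective object $\underline{\Hom}_A(-,M)$ is sent to $\underline{\Hom}_B(-,\tilde{\alpha}(M))$ and the injective object $\Ext_A^1(-,N)$ to $\Ext_B^1(-,\alpha'(N))$, so $\alpha$ matches both the source and the target of the displayed $\Ext$ computation, delivering the desired isomorphism. Specialising to $M=X^{np}$, $N=X^{ni}$ and combining with the previous reductions finishes the proof. The technical heart of the argument, and its main obstacle, is verifying the functor-category identification above and its compatibility with $\alpha$: it requires careful bookkeeping of projective/injective coresolutions in $(A\stmod)\lmod$, taking into account that $\tilde{\alpha}$ and $\alpha'$ can disagree precisely on the nodes in $\mathfrak{n}_F(A)$, which is exactly why the containment $\mathfrak{n}_F(A)\subseteq\add(X)$ built into the definition of $\mathcal{GCN}_F(A)$ is indispensable.
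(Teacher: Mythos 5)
Your reduction is fine up to the point where you claim it ``therefore suffices'' to prove
$\Ext_A^i(M,N)\cong\Ext_B^i(\tilde{\alpha}(M),\alpha'(N))$
for \emph{all} $M\in A\lmod_{\mathscr{P}}$, $N\in A\lmod_{\mathscr{I}}$ and $i\ge 1$. That statement is false, and the reduction discards precisely the hypothesis that makes the proposition work. Concretely: take $A=T_2(k)\times T_2(k)$ and $B$ the self-injective Nakayama algebra on the $2$-cycle with radical square zero (the paper's Example in Section 4.2). With the notation there, choose $M=I_1^{(1)}$ and $N=P_2^{(2)}$, indecomposables lying in different blocks of $A$. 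Then $\Ext^2_A(M,N)=0$ trivially, but one computes $\tilde{\alpha}(M)\cong\alpha'(N)\cong S_1$ and $\Ext^2_B(S_1,S_1)\cong\underline{\Hom}_B(\Omega_B^2 S_1,S_1)=\underline{\Hom}_B(S_1,S_1)\neq 0$. The reduction loses the information that $X$ is a generator-cogenerator containing $\mathfrak{n}_F(A)$, which is exactly what controls the higher-degree transport; once you pass to arbitrary $M$, $N$ that control is gone.

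The functor-category isomorphism you invoke,
$\Ext_A^i(M,N)\cong\Ext^{i-1}_{(A\stmod)\lmod}\bigl(\underline{\Hom}_A(-,M),\Ext_A^1(-,N)\bigr)$,
is also wrong for $i\ge 2$: $\underline{\Hom}_A(-,M)$ is a \emph{projective} object of $(A\stmod)\lmod$, so the right-hand side vanishes for $i\ge 2$, while $\Ext_A^i(M,N)$ of course need not. The Yoneda identification only gives the degree-one statement $\Ext_A^1(M,N)\cong\Hom_{(A\stmod)\lmod}(\underline{\Hom}_A(-,M),\Ext_A^1(-,N))$, which is how the paper proves Lemma \ref{lem:extention}(1). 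For higher degrees the paper dimension-shifts inside $A\lmod$, writes $\Ext^{n+1}_A(X,Y)\cong\Ext^1_B(\tilde{\alpha}(\Omega^n_A X),\Phi(Y))$, and then must compare $\tilde{\alpha}(\Omega^n_A X)$ with $\Omega^n_B(\tilde{\alpha}X)$. These differ by correction terms built from the $\bigtriangleup_A$-components of the syzygies (Lemma \ref{lem:g4}), and killing those corrections requires the vanishing $\Ext^i_B(N,\Phi(X))=0$ for $N\in\bigtriangledown_B$ up to the relevant degree (Lemma \ref{lem:extention}(2)); that vanishing is available precisely because $\Phi(X)\in\mathcal{GCN}_{F^{-1}}(B)$ forces $\bigtriangledown_B\subseteq\add(\Phi(X))$ and one is arguing inductively from $\domdim\End_B(\Phi(X))$. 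Your proposal skips this inductive bootstrap entirely, which is where the actual work of Proposition \ref{prop:g5} lies.
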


 The proof of Proposition \ref{prop:g5} will need three lemmas. The first one
 extends the first part of \cite[Lemma 3.3]{G04}.

 \begin{lem} \label{lem:g3}
   Assume that ${_A}Z$ is indecomposable and non-projective. Let
   $$0\lra X\oplus X'\lra Y\oplus P \lraf{g}  Z\lra 0$$
   be an exact sequence of $A$-modules without a split exact sequence as a
   direct summand,
   such that ${X\in\add(\bigtriangleup_A^c)}$, $X'\in\add(\bigtriangleup_A)$,
   $Y\in A\lmod_{\mathscr{P}}$ and $P\in\add({_A}A)$.
   Then there is an exact sequence of $B$-modules
   $$
   0\lra \tilde{\alpha}(X)\oplus \alpha'(X')\lra \tilde{\alpha}(Y)\oplus Q\lraf{g'} \tilde{\alpha}(Z)\lra 0
   $$
   without a split direct summand such that $g'=F(g)$ in $B\stmod$ with $Q\in\add(B)$.
 \end{lem}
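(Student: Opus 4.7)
The plan is threefold: first, lift $F(\underline g)$ to a morphism $h$ in $B\lmod$; second, force surjectivity by adding a projective cover of $\tilde\alpha(Z)$; and third, identify the kernel using the functor-category equivalence $\alpha\colon (A\stmod)\lmod \to (B\stmod)\lmod$ together with the refined correspondences $\tilde\alpha$ and $\alpha'$ of Lemma \ref{lem:refinement on bijection in stable equivalences}.

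For the construction, since $P$ is projective, the class of $g$ in $A\stmod$ is a morphism $\underline g\colon Y \to Z$, and its image $F(\underline g)$ is a morphism $\tilde\alpha(Y) \to \tilde\alpha(Z)$ in $B\stmod$. I would choose any lift $h\colon \tilde\alpha(Y) \to \tilde\alpha(Z)$ in $B\lmod$, together with a projective cover $\pi\colon Q \to \tilde\alpha(Z)$ with $Q\in\add({_B}B)$; their combination
\[
g' = (h,\,\pi)\colon \tilde\alpha(Y) \oplus Q \lra \tilde\alpha(Z)
\]
is surjective and agrees with $F(g)$ in $B\stmod$ because $Q$ vanishes there.

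To identify $K := \ker(g')$, I would apply $\underline{\Hom}_A(-, ?)$ and $\Ext^1_A(-, ?)$ to the given exact sequence, obtaining a long exact sequence inside $(A\stmod)\lmod$. Transporting via $\alpha$ yields the analogous long exact sequence in $(B\stmod)\lmod$, where projective functors transform as $\underline{\Hom}_A(-, U) \mapsto \underline{\Hom}_B(-, \tilde\alpha(U))$ and injective functors as $\Ext^1_A(-, U) \mapsto \Ext^1_B(-, \alpha'(U))$. Splitting $X\oplus X'$ according to $X\in\add(\bigtriangleup_A^c)$ and $X'\in\add(\bigtriangleup_A)$, and using $\tilde\alpha(X)\cong\alpha'(X)$ for $X\in\add(\bigtriangleup_A^c)$ (by \cite[Lemma 3.4]{AR78}), I would read off that the non-projective non-injective part of $K$ equals $\tilde\alpha(X)\oplus\alpha'(X')$. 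Thus $K\cong \tilde\alpha(X)\oplus\alpha'(X')\oplus Q_0$ for some $Q_0\in\add({_B}B)$, which is absorbed into $Q$ by minimality of $\pi$; any remaining split summand of the resulting sequence must be of the form $0\to R\to R\to 0$ with $R$ projective (since $\tilde\alpha(Z)$ is indecomposable and non-projective, mirroring $Z$), and is absorbed likewise.

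The hard part will be the kernel identification: for summands of $X'$ lying in $\mathfrak{n}_F(A)$, the functors $\tilde\alpha$ and $\alpha'$ disagree, and one must justify that it is $\alpha'(X')$ — coming from the $\Ext^1$-side of the long exact sequence — that appears, not $\tilde\alpha(X')$. This is precisely why Lemma \ref{lem:refinement on bijection in stable equivalences}(1) and the extension of \cite[Lemma 3.3]{G04} are essential: they track exactly this distinction between the $\bigtriangleup_A$-part (handled by $\alpha'$) and the $\bigtriangleup_A^c$-part (where $\tilde\alpha=\alpha'$).
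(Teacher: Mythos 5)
Your overall strategy mirrors the paper's: translate the short exact sequence into the functor categories via $\alpha$, use the fact that $\alpha$ sends $\Ext^1_A(-,U)$ to $\Ext^1_B(-,\alpha'(U))$ and $\underline{\Hom}_A(-,U)$ to $\underline{\Hom}_B(-,\tilde\alpha(U))$, identify the kernel from the $\Ext^1$-side, and then invoke Lemma \ref{lem:refinement on bijection in stable equivalences} together with $\tilde\alpha\cong\alpha'$ on $\add(\bigtriangleup_A^c)$ to split the kernel into the asserted pieces. You also correctly locate the subtle point: on summands of $X'$ lying in $\mathfrak{n}_F(A)$ the two correspondences disagree, and it is $\alpha'$, coming from the injective side, that must appear. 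This is exactly the paper's mechanism. The difference is that the paper does not re-construct the exact sequence: it cites the proof of the first part of \cite[Lemma 3.3]{G04} to obtain, in one step, an exact sequence of $B$-modules of the required shape \emph{already without a split exact summand} together with the isomorphism $\alpha(\Ext^1_A(-,X\oplus X'))\cong\Ext^1_B(-,N\oplus N')$; it then only has to match $N\oplus N'$ with $\tilde\alpha(X)\oplus\alpha'(X')$.

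The gap in your version is precisely the construction you do by hand. Taking ``any lift $h$'' of $F(\underline{g})$ and a projective cover $\pi\colon Q\to\tilde\alpha(Z)$, and setting $g'=(h,\pi)$, gives a surjection, but nothing ensures the resulting short exact sequence has no split exact summand; yet that property is what makes the $\Ext^1$-identification pin down the kernel on the nose (it only determines the kernel up to injective direct summands, and the no-split-summand condition is exactly the statement that the kernel has none). Your cleanup step asserts that any split summand must be $0\to R\to R\to 0$ with $R$ projective, ``absorbed into $Q$''. This is not justified: a split-off kernel summand $R$ would automatically be injective (it splits off the middle term), but it need not be projective. Since $\tilde\alpha(Y)\in B\lmod_{\mathscr{P}}$ forbids projective summands but says nothing about injective ones, a non-projective injective $R$ would necessarily sit inside $\tilde\alpha(Y)$, not $Q$, and removing it would destroy the required shape $\tilde\alpha(Y)\oplus Q$ of the middle term. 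To close this gap you would essentially have to redo the careful construction in \cite[Lemma 3.3]{G04}, which builds the sequence from a minimal projective presentation of $\alpha(\Ext^1_A(-,X\oplus X'))$ inside $(B\stmod)\lmod$ and thereby controls exactly this phenomenon; the safer route, and the one the paper takes, is simply to quote that lemma.
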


 \begin{proof}
 By the proof of the first part of \cite[Lemma 3.3]{G04}, the following two statements
 hold.
 \begin{itemize}
   \item[\rm(1)]
   There is an exact sequence of $B$-modules
   $$
   0\lra N\oplus N'\lra \tilde{\alpha}(Y)\oplus Q\lraf{g'} \tilde{\alpha}(Z)\lra 0
   $$
   without a split direct summand such that $g'=\alpha(g)$ in
   $B\stmod$, and that $N\in\add(\bigtriangleup_B^c)$,
   ${N'\in\add(\bigtriangleup_B)}$ and $Q\in\add(B)$.
   \item[\rm(2)] $\alpha\big(\Ext^1_A(-,X\oplus X')\big)
   \cong \Ext^1_B(-,N\oplus N')$ in $(B\stmod)\lmod$.
 \end{itemize}
 Since $X\oplus X'\in A\lmod_{\mathscr{I}}$, there is an isomorphism
 $\alpha\big(\Ext^1_A(-,X\oplus X')\big)\cong
 \Ext^1_A(-,\alpha'(X)\oplus \alpha'(X'))$.
 Note that $\alpha'(X)\cong\tilde{\alpha}(X)$ since $X\in\add(\bigtriangleup_A^c)$.
 Thus $$\Ext^1_B(-,N\oplus N')\cong \alpha\big(\Ext^1_A(-,X\oplus X')\big)\cong
 \Ext^1_B(-,\tilde{\alpha}(X)\oplus \alpha'(X')).$$
 This implies $\tilde{\alpha}(X)\oplus \alpha'(X')\cong N\oplus N'$
 in $B\lmod_{\mathscr{I}}$. Since $\tilde{\alpha}(X)\in\add(\bigtriangleup_B^c)$ and
 $\alpha'(X')\in \bigtriangleup_B$ by Lemma \ref{lem:refinement on bijection in stable equivalences}(1),
 there are isomorphisms $\tilde{\alpha}(X)\cong N$ and $\alpha'(X')\cong N'$.
 By $(1)$, Lemma \ref{lem:g3} follows.
 \end{proof}

 The second lemma establishes a connection between different syzygy modules under stable equivalences.

 \begin{lem}\label{lem:g4}
   Let $X\in A\lmod$ and $n$ a positive integer. Then
   $$
   \tilde{\alpha}\big(\Omega_A^n(X)\big)\oplus \bigoplus_{j=1}^n\Omega_B^{n-j}\big(\alpha'(\Omega_A^j(X)_\triangle)\big)
   \cong \Omega_B^n(\tilde{\alpha}(X))\oplus\bigoplus_{j=1}^n\Omega_B^{n-j}\big(\tilde{\alpha}(\Omega_A^j(X)_\triangle)\big),
   $$
   where $\Omega_A^j(X)_\triangle$ stands for the $\bigtriangleup_A$-component of the $A$-module $\Omega_A^j(X)$.
 \end{lem}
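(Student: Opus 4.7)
The plan is to proceed by induction on $n$. The base case $n=1$ follows from a single application of Lemma \ref{lem:g3} to a minimal projective cover, and the inductive step combines this base case (with $\Omega_A^n(X)$ in place of $X$) with the effect of applying the syzygy functor $\Omega_B$ to the inductive hypothesis.

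For the base case, I would start from a minimal projective cover $0\to \Omega_A(X)\to P\to X\to 0$. Minimality forces $\Omega_A(X)$ to lie in $A\lmod_{\mathscr{I}}$: any injective submodule $I$ of $\Omega_A(X)\subseteq P$ would split off from $P$, contradicting minimality unless $I=0$. Hence one may decompose $\Omega_A(X)\cong \Omega_A(X)'\oplus \Omega_A(X)_\triangle$ with $\Omega_A(X)'\in\add(\bigtriangleup_A^c)$ and $\Omega_A(X)_\triangle\in\add(\bigtriangleup_A)$. Reducing to indecomposable direct summands of $X$ and applying Lemma \ref{lem:g3} with $Y=0$ and $Z$ such a summand then produces, after taking direct sums, an exact sequence
\[
0\lra \tilde{\alpha}(\Omega_A(X)')\oplus \alpha'(\Omega_A(X)_\triangle)\lra Q\lra \tilde{\alpha}(X)\lra 0
\]
with $Q\in\add(B)$ and without split direct summand, so that $Q$ is a projective cover of $\tilde{\alpha}(X)$ and $\Omega_B(\tilde{\alpha}(X))\cong \tilde{\alpha}(\Omega_A(X)')\oplus \alpha'(\Omega_A(X)_\triangle)$. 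Adding $\tilde{\alpha}(\Omega_A(X)_\triangle)$ to both sides and using $\tilde{\alpha}(\Omega_A(X))\cong \tilde{\alpha}(\Omega_A(X)')\oplus \tilde{\alpha}(\Omega_A(X)_\triangle)$ delivers the formula at $n=1$.

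For the inductive step, assuming the formula at level $n$, I apply $\Omega_B$ to both sides. This shifts each exponent $\Omega_B^{n-j}$ to $\Omega_B^{n+1-j}$ and replaces $\tilde{\alpha}(\Omega_A^n(X))$ by $\Omega_B(\tilde{\alpha}(\Omega_A^n(X)))$ on the left, and $\Omega_B^n(\tilde{\alpha}(X))$ by $\Omega_B^{n+1}(\tilde{\alpha}(X))$ on the right. Invoking the base case with $\Omega_A^n(X)$ in place of $X$ yields
\[
\Omega_B(\tilde{\alpha}(\Omega_A^n(X)))\oplus \tilde{\alpha}(\Omega_A^{n+1}(X)_\triangle)\cong \tilde{\alpha}(\Omega_A^{n+1}(X))\oplus \alpha'(\Omega_A^{n+1}(X)_\triangle).
\]
Adding $\tilde{\alpha}(\Omega_A^{n+1}(X)_\triangle)$ to both sides of the $\Omega_B$-shifted identity and then substituting the displayed equation contributes exactly the missing $j=n+1$ summand to each of the two direct sums, producing the formula at level $n+1$.

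The main obstacle is careful bookkeeping of projective summands throughout. Lemma \ref{lem:g3} requires the sequence to be without split direct summand, which obliges us to work with minimal projective covers; and the inductive step relies on $\Omega_B$ commuting with direct sums and sending isomorphic modules to isomorphic syzygies, which again uses minimality. All identities should therefore be read as isomorphisms of modules without projective summands, as is standard when manipulating syzygies; the argument above is then syntactically just the addition of a single $j=n+1$ term on each side at each inductive step.
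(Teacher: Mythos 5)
Your proof is correct and uses essentially the same approach as the paper: induction on $n$, with the base case obtained by applying Lemma \ref{lem:g3} to a minimal projective cover of $X$ (which indeed forces $\Omega_A(X)\in A\lmod_{\mathscr{I}}$). The inductive step is organized slightly differently --- you apply $\Omega_B$ to the level-$n$ identity for $X$ and then correct the leading term $\Omega_B\big(\tilde{\alpha}(\Omega_A^n(X))\big)$ via the base case applied to $\Omega_A^n(X)$, whereas the paper applies the level-$(n-1)$ identity to $\Omega_A(X)$ and then the base case to $X$ --- but the ingredients and the bookkeeping are identical.
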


 \begin{proof} Lemma \ref{lem:g4} will be shown by induction on $n$.
 If $n=1$, then it suffices to check
 $$(\ast)\quad
 \tilde{\alpha}\big(\Omega_A(X)\big)\oplus\alpha'(\Omega_A(X)_\triangle)
 \cong \Omega_B(\tilde{\alpha}(X))\oplus\tilde{\alpha}(\Omega_A(X)_\triangle).$$
 If $X$ is projective, then both sides are zero, and there is nothing to prove.
 It is enough to show $(\ast)$ when $X$ is indecomposable and non-projective.
 Let $$0\lra \Omega_A(X)\lra P\lra X\lra 0$$ be an exact sequence of $A$-modules such that
 $P$ is a projective cover of $X$. Then this sequence contains no split direct summand and
 $Y:=\Omega_A(X)\in A\lmod_{\mathscr{I}}$. So $Y$ has a decomposition
 as $Y\cong Y_\triangle\oplus Z$, where $Y_\triangle\in\add(\bigtriangleup_A)$ and
 $Z\in\add(\bigtriangleup_A^c)$.
 Applying Lemma \ref{lem:g3} to the sequence
 $$0\lra Z\oplus Y_\triangle \lra P\lra X\lra 0$$
 yields the following exact sequence of $B$-modules
 $$
 0\lra \tilde{\alpha}(Z)\oplus\alpha'(Y_\triangle)\lra Q\lra\tilde{\alpha}(X)\lra 0
 $$
 without split direct summands, such that $Q\in\add(B)$. Thus $Q$ is a projective cover of $\tilde{\alpha}(X)$ and further,
 $\Omega_B(\tilde{\alpha}(X))\cong\tilde{\alpha}(Z)\oplus \alpha'(Y_\triangle)$.
 Hence $$\tilde{\alpha}(Y)\oplus\alpha'(Y_\triangle)
   \cong \tilde{\alpha}(Y_\triangle)\oplus \tilde{\alpha}(Z) \oplus \alpha'(Y_\triangle)
   \cong\Omega_B(\tilde{\alpha}(X))\oplus \tilde{\alpha}(Y_\triangle).$$
This shows the isomorphism $(\ast)$.

Let $n\geq 2$. Suppose that for any $A$-module $U$, there is an isomorphism of $B$-modules
 $$
 (\ast\ast)\quad\tilde{\alpha}\big(\Omega_A^{n-1}(U)\big)\oplus \bigoplus_{j=1}^{n-1}
 \Omega_B^{n-1-j}\big(\alpha'(\Omega_A^j(U)_\triangle)\big)
 \cong \Omega_B^{n-1}(\tilde{\alpha}(U))\oplus\bigoplus_{j=1}^{n-1}
 \Omega_B^{n-1-j}\big(\tilde{\alpha}(\Omega_A^j(U)_\triangle)\big).
 $$
Choosing $U=Y=\Omega_A(X)$ gives
 $$
 \tilde{\alpha}\big(\Omega_A^{n}(X)\big)\oplus \bigoplus_{j=1}^{n-1}
 \Omega_B^{n-1-j}\big(\alpha'(\Omega_A^{j+1}(X)_\triangle)\big)
 \cong \Omega_B^{n-1}(\tilde{\alpha}(\Omega_A(X)))\oplus\bigoplus_{j=1}^{n-1}
 \Omega_B^{n-1-j}\big(\tilde{\alpha}(\Omega_A^{j+1}(X)_\triangle)\big).
 $$
 Thus
 \begin{align*}
   &\tilde{\alpha}\big(\Omega_A^n(X)\big)\oplus \bigoplus_{j=1}^n\Omega_B^{n-j}\big(\alpha'(\Omega_A^j(X)_\triangle)\big)\\
   \cong \,\, &
   \tilde{\alpha}\big(\Omega_A^{n}(X)\big)\oplus \bigoplus_{j=1}^{n-1}
   \Omega_B^{n-1-j}\big(\alpha'(\Omega_A^{j+1}(X)_\triangle)\big)
   \oplus \Omega_B^{n-1}(\alpha'(\Omega_A(X)_\triangle)) \\
   \cong \, \, &
   \Omega_B^{n-1}(\tilde{\alpha}(\Omega_A(X))\oplus\alpha'(\Omega_A(X)_\triangle))
   \oplus\bigoplus_{j=1}^{n-1}
   \Omega_B^{n-1-j}\big(\tilde{\alpha}(\Omega_A^{j+1}(X)_\triangle)\big)\qquad \text{\big(by\;($\ast\ast$)\big)}\\
   \cong \, \, &
   \Omega_B^{n}(\tilde{\alpha}(X))\oplus
   \Omega_B^{n-1}\big(\tilde{\alpha}(\Omega_A(X)_\triangle)\big)
   \oplus\bigoplus_{j=1}^{n-1}\Omega_B^{n-(j+1)}\big(\tilde{\alpha}(\Omega_A^{j+1}(X)_\triangle)\big)\qquad \text{\big(by\;($\ast$)\big)}\\
   \cong \, \, & \Omega_B^n(\tilde{\alpha}(X))\oplus\bigoplus_{j=1}^n\Omega_B^{n-j}\big(\tilde{\alpha}(\Omega_A^j(X)_\triangle)\big).
 \end{align*}
\end{proof}

\medskip
The third lemma identifies extension groups of modules preserved under
stable equivalences.

 \begin{lem}\label{lem:extention}
   Let $X\in A\lmod$, $Y\in\mathcal{GCN}_F(A)$ and $n$ a positive integer.
   \begin{itemize}
    \item[\rm(1)]  Then $\Ext_A^1(X,Y)\cong \Ext_B^1(\Phi(X),\Phi(Y))$.
    \item[\rm(2)]  If $\Ext_B^i(N,\Phi(Y))=0$
    for each $N\in\bigtriangledown_B$ and $1\leq i\leq n$, then
    $$\Ext^{n+1}_A(X,Y)\cong \Ext_B^{n+1}(\Phi(X),\Phi(Y)).$$
   \end{itemize}
 \end{lem}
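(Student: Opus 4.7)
The plan is to handle (1) and (2) together, with (2) reducing to (1) by a syzygy shift controlled by Lemma \ref{lem:g4}.

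For part (1), I would strip off projective summands from the first variable and injective summands from the second, and then invoke the functor-category identity from Subsection~4.3. Using Lemma \ref{lem:refinement on bijection in stable equivalences}(2) to rewrite $\Phi(Y)\cong \alpha'(Y)\oplus\bigoplus_{I\in\mathscr{I}_B}I$, one immediately gets
\[
\Ext^1_B(\Phi(X),\Phi(Y))\;\cong\;\Ext^1_B(\tilde{\alpha}(X),\alpha'(Y)).
\]
On the $A$-side, I would replace $Y$ by its non-injective part $Y'\in A\lmod_{\mathscr{I}}$ (injective summands of $Y$ contribute nothing to $\Ext^{\geq 1}$), apply $\alpha$ to the injective functor $\Ext^1_A(-,Y')$ to obtain $\alpha(\Ext^1_A(-,Y'))\cong\Ext^1_B(-,\alpha'(Y'))$, and then evaluate at the image of $X$ in $A\stmod$. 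Since $\alpha'(Y')=\alpha'(Y)$ by the convention on injective summands, this matches the expression displayed above, proving (1).

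For part (2), I would dimension-shift on both sides via syzygies. From (1) applied to $\Omega^n_A(X)$,
\[
\Ext^{n+1}_A(X,Y)\;=\;\Ext^1_A(\Omega^n_A X,Y)\;\cong\;\Ext^1_B(\Phi(\Omega^n_A X),\Phi(Y)),
\]
while on the other side $\Ext^{n+1}_B(\Phi(X),\Phi(Y))=\Ext^1_B(\Omega^n_B\tilde{\alpha}(X),\Phi(Y))$, since iterated syzygies kill the projective summands of $\Phi(X)$. Applying $\Ext^1_B(-,\Phi(Y))$ to the Lemma \ref{lem:g4} isomorphism
\[
\tilde{\alpha}(\Omega^n_A X)\oplus\bigoplus_{j=1}^n\Omega_B^{n-j}\alpha'(\Omega^j_A(X)_\triangle)\;\cong\;\Omega^n_B\tilde{\alpha}(X)\oplus\bigoplus_{j=1}^n\Omega_B^{n-j}\tilde{\alpha}(\Omega^j_A(X)_\triangle)
\]
equates the two outer terms up to an alternating sum of error contributions of the shape $\Ext^{n-j+1}_B(M,\Phi(Y))$, where $M$ is a summand of either $\alpha'(\Omega^j_A(X)_\triangle)$ or $\tilde{\alpha}(\Omega^j_A(X)_\triangle)$ and $1\leq j\leq n$. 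As $j$ varies, the degrees $n-j+1$ cover exactly $\{1,\dots,n\}$, matching the range in the hypothesis.

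The main obstacle is the vanishing of these error terms: one must locate each indecomposable summand $M$ in the `correct' part of $B\lmod$, namely $\add(\bigtriangledown_B)$ up to projective $B$-summands (which are inert for $\Ext^{\geq 1}$). Since $\Omega^j_A(X)_\triangle\in\add(\bigtriangleup_A)=\add(\mathfrak{n}_F(A)\cup(\mathscr{P}_A\setminus\mathscr{I}_A))$, I would check summand by summand, using Lemma \ref{lem:refinement on bijection in stable equivalences}(1): an indecomposable projective non-injective summand is annihilated by $\tilde{\alpha}$ and sent by $\alpha'$ into $\bigtriangleup_B=\mathfrak{n}_{F^{-1}}(B)\cup(\mathscr{P}_B\setminus\mathscr{I}_B)$; an indecomposable summand in $\mathfrak{n}_F(A)\subseteq\bigtriangledown_A$ is sent by $\tilde{\alpha}$ into $\bigtriangledown_B$ and by $\alpha'$ into $\bigtriangleup_B$. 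In every case the resulting indecomposable is either projective in $B$ or lies in $\mathfrak{n}_{F^{-1}}(B)\subseteq\bigtriangledown_B$, so the hypothesis $\Ext^i_B(N,\Phi(Y))=0$ for $N\in\bigtriangledown_B$, $1\leq i\leq n$, together with projectivity, kills all error terms and yields (2).
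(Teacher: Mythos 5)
Your proposal is correct and follows essentially the same route as the paper: part (1) via the functor-category identification $\alpha(\Ext^1_A(-,Y))\cong\Ext^1_B(-,\alpha'(Y))$ (the paper makes the ``evaluate at $X$'' step rigorous through two applications of Yoneda's Lemma), and part (2) via dimension shift through Lemma~\ref{lem:g4} followed by the same $\bigtriangleup_B$/$\bigtriangledown_B$ bookkeeping to kill the error summands.
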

 \begin{proof}
 Set $\mathcal{C}:=(A\stmod)\lmod$ and $\mathcal{D}:=(B\stmod)\lmod$.
 Then $\alpha :\mathcal{C}\to \mathcal{D}$ is an equivalence
 with a quasi-inverse $\beta$. By Yoneda's Lemma,
 \[
 \Ext_A^1(X,Y)\cong \Hom_\mathcal{C}\big(\underline{\Hom}_A(-,X),\Ext_A^1(-,Y)\big)
 \cong\Hom_\mathcal{D}\big(\alpha(\underline{\Hom}_A(-,X)),\alpha(\Ext_A^1(-,Y))\big).
 \]
 Since $\alpha(\underline{\Hom}_A(-,X))\cong\underline{\Hom}_B(-,\tilde{\alpha}(X))$ and
 $\alpha(\Ext_A^1(-,Y))\cong\Ext_B^1(-,\alpha'(Y))$, it follows that
  \[
 \Ext_A^1(X,Y)\cong \Ext_B^1(\tilde{\alpha}(X),\;\alpha'(Y)).
 \]
 Moreover, by Lemma \ref{lem:refinement on bijection in stable equivalences}(2),
$\Phi(Y)=\alpha'(Y)\oplus \bigoplus_{I\in\mathscr{I}_B}I$, since
$Y\in \mathcal{GCN}_F(A)$.
Also, $\Phi(X)=\tilde{\alpha}(X)\oplus\bigoplus_{Q\in\mathscr{P}_B}Q$,
by definition. Consequently,
 $\Ext_A^1(X,Y)\cong\Ext_B^1(\Phi(X),\Phi(Y))$
 Thus $(1)$ holds

The proof of $(1)$ also implies
 \[
 \Ext_A^{n+1}(X,Y)\cong\Ext_A^1(\Omega_A^n(X),\,Y)
 \cong\Ext_B^1(\tilde{\alpha}(\Omega_A^n(X)),\,\alpha'(Y))
 \cong \Ext_B^1(\tilde{\alpha}(\Omega_A^n(X)),\,\Phi(Y)).
 \]
 By Lemma \ref{lem:g4},
 $\tilde{\alpha}\big(\Omega_A^n(X)\big)\oplus L\cong \Omega_B^n(\tilde{\alpha}(X))\oplus R$ in $B\lmod$, where
 $$
 L:=\bigoplus_{j=1}^n\Omega_B^{n-j}\big(\alpha'(\Omega_A^j(X)_\triangle)\big)
 \quad\mbox{and}\quad R:=\bigoplus_{j=1}^n\Omega_B^{n-j}\big(\tilde{\alpha}(\Omega_A^j(X)_\triangle)\big).
 $$
 Since $\Omega_A^j(X)_\triangle\in\add(\bigtriangleup_A)$ and
 $\bigtriangleup_A= \mathfrak{n}_F(A) \dot{\cup}(\mathscr{P}_A\setminus\mathscr{I}_A)$, it follows from
 Lemma \ref{lem:refinement on bijection in stable equivalences}(1) that
 \[
 \alpha'(\Omega_A^j(X)_\triangle)\in\add(\bigtriangleup_B)
 \quad\mbox{and}\quad \tilde{\alpha}(\Omega_A^j(X)_\triangle)\in\add\big(\tilde{\alpha}(\mathfrak{n}_F(A))\big)\subseteq\add(\bigtriangledown_B)
 \]
 where $\bigtriangledown_B:=\mathfrak{n}_{F^{-1}}(B)\dot{\cup}(\mathscr{I}_A\setminus\mathscr{P}_A)$.
 So, if $\Ext_B^i(N,\Phi(Y))=0$ for each $N\in\bigtriangledown_B$
 and $1\leq i\leq n$, then
 $\Ext_B^1(L,\Phi(Y))=0=\Ext_B^1(R,\Phi(Y))$, and thus
 \begin{align*}
 \Ext_B^1(\tilde{\alpha}(\Omega_A^n(X)),\,\Phi(Y))& \cong
 \Ext_B^1(\Omega_B^n(\tilde{\alpha}(X)),\,\Phi(Y))\\ &\cong
 \Ext_B^{n+1}(\tilde{\alpha}(X),\,\Phi(Y))\cong\Ext_B^{n+1}(\Phi(X),\,\Phi(Y)).
 \end{align*}
 This shows $(2)$.
 \end{proof}

 \medskip

{\bf Proof of Proposition \ref{prop:g5}}.

   If \mbox{$\domdim \End_B(\Phi(X)) = n+2$} for some $n\in\mathbb{N}$,
then $\Ext^i_B(\Phi(X),\Phi(X))=0$ for $1\leq i\leq n$ and
   $\Ext^{n+1}_B(\Phi(X),\Phi(X))\neq 0$ due to
   Theorem \ref{thm:Muller characterization}.
By Lemma \ref{lem:refinement on bijection in stable equivalences}(2),
$\Phi(X)\in\mathcal{GCN}_{F^{-1}}(B)$. Thus,
${\bigtriangledown_B\subseteq\add(\Phi(X))}$.
   Therefore, Lemma \ref{lem:extention} implies
   $\Ext^j_A(X,X)\cong \Ext^j_B(\Phi(X),\Phi(X))$
   for \mbox{$1\leq j \leq  n+1$.} Then
  $\domdim\End_A(X)=n+2$, again by Theorem \ref{thm:Muller characterization}.
   Similarly, if $\domdim \End_B(\Phi(X))=\infty$, then $\domdim \End_A(X)=\infty$.
   $\square$

\medskip
\noindent \textbf{Remark.} When neither $A$ nor $B$ has nodes, both Lemma \ref{lem:g4} and Lemma \ref{lem:extention} can be simplified.
In Lemma \ref{lem:g4}, the isomorphism becomes $\tilde{\alpha}\big(\Omega_A^n(X)\big)\oplus Q\cong \Omega_B^n(\tilde{\alpha}(X))$,
where $Q$ is a projective $B$-module without injective direct summands. This implies a stronger form of Lemma \ref{lem:extention}:
If $X\in A\lmod$ and $Y\in\mathcal{GCN}_F(A)$, then $\Ext^{n}_A(X,Y)\cong \Ext_B^{n}(\Phi(X),\Phi(Y))$ for any $n\geq 1$.
Note that both isomorphisms can be obtained from \cite[Theorem 3.6]{AR78} and \cite[Section 1, Corollary, and Proposition 2.2]{MV90}.
Thus, under the stronger assumption that $A$ and $B$ are two stably equivalent algebras without nodes and without semi-simple blocks, Proposition \ref{prop:g5}
can also be derived from the results in \cite{MV90}.

\subsection{Completion of proofs, and an application to representation dimension}

{\bf Proof of Theorem \ref{thm:rigidity dimension is invariant under stable equivalence}}.

 Let $X\in\mathcal{GCN}_F(A)$. Then  $\Phi(X)\in\mathcal{GCN}_{F^{-1}}(B)$.
 By Lemma \ref{cor:rep dim is invariant under stable equivalences}, both
$\End_A(X)$ and $\End_B(\Phi(X))$ have the
 same global dimension. Moreover, by Proposition \ref{prop:g5}, they have the
same dominant dimension.
 When $\mathfrak{n}_F(A)$ is the empty set,
then $\mathcal{GCN}_F(A)$ is exactly the
class of basic generator-cogenerators in $A\lmod$. It follows that
$\cf(A)\leq \cf(B)$. If $\mathfrak{n}_{F^{-1}}(B)$ is also empty,
then $\cf(B)\leq \cf(A)$, and thus
 $\cf(A)=\cf(B)$. $\square$

\medskip

\medskip
To prepare for the proof of
Theorem \ref{Stable equivalences of self-injective algebras},
we describe the rigidity dimensions of Nakayama self-injective algebras with
radical square zero.

\begin{prop}\label{Nakayama self-injective algebra with radical square zero}
Suppose that $A$ is an indecomposable non-simple Nakayama self-injective algebra with
radical square zero. Let $e$ be the number of isomorphism classes of simple $A$-modules.
Then  $\cf(A) = e+1$.
 \end{prop}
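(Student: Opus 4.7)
The starting point is to describe $A$ explicitly: being indecomposable, non-simple, Nakayama, self-injective with $\rad^2(A)=0$ and $e$ isomorphism classes of simples, $A$ is isomorphic to $kQ/J^2$, where $Q$ is the oriented cyclic quiver with $e$ vertices and $J$ its arrow ideal. Label the simples $S_1,\dots,S_e$ so that the indecomposable projective-injective modules $P_1,\dots,P_e$ fit into short exact sequences $0\to S_{i-1}\to P_i\to S_i\to 0$ (indices taken modulo $e$); these are the only indecomposable $A$-modules. In particular $\Omega_A(S_i)=S_{i-1}$ and iteratively $\Omega_A^n(S_i)=S_{i-n}$. Since $A$ is self-injective, $\Ext^n_A(X,Y)\cong \underline{\Hom}_A(\Omega_A^n X,Y)$, so one obtains
\[
\Ext^n_A(S_i,S_j)\;\cong\;k \quad \text{if } n\equiv i-j\pmod{e}, \qquad \text{and zero otherwise.}
\]

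Since $A$ is self-injective, every basic generator-cogenerator has the form $M=A\oplus X$ with $X$ a direct sum of some of the simples. If $X=0$, then $\End_A(M)\cong A^{\rm op}$ is itself a non-semisimple self-injective algebra, hence of infinite global dimension, so this case is excluded from the supremum defining $\cf(A)$. If $X\ne 0$, choose any simple summand $S_i$ of $X$; by the computation above $\Ext^e_A(S_i,S_i)\ne 0$, so $\evd(M)\le e-1$, and M\"uller's Theorem \ref{thm:Muller characterization} gives $\domdim\End_A(M)\le e+1$. This establishes $\cf(A)\le e+1$. For the reverse inequality I would take $M_0:=A\oplus S_1$. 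Its only non-projective indecomposable summand is $S_1$, so $\evd(M_0)=\evd(S_1)=e-1$ by the cohomology computation, and hence $\domdim\End_A(M_0)=e+1$.

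The main obstacle is then to verify $\gdim\End_A(M_0)<\infty$, so that $M_0$ qualifies in the supremum. The plan is to construct, for every $A$-module, a finite $\add(M_0)$-resolution. Projective modules already lie in $\add(M_0)$; for a simple $S_j$ with $j\ne 1$, iteratively splicing the sequences $0\to S_{j-k-1}\to P_{j-k}\to S_{j-k}\to 0$ for $k=0,1,\dots,j-2$ until the syzygy becomes $S_1\in\add(M_0)$ produces an exact sequence
\[
0\;\lra\; S_1\;\lra\; P_2\;\lra\;\cdots\;\lra\; P_j\;\lra\; S_j\;\lra\; 0
\]
of length $j-1\le e-1$, all of whose non-terminal terms belong to $\add(M_0)$. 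Every $A$-module therefore has $\add(M_0)$-resolution dimension at most $e-1$, and Auslander's classical bound $\gdim\End_A(M)\le \add(M)\text{-resdim}(A\lmod)+2$ for generators gives $\gdim\End_A(M_0)\le e+1<\infty$. Combined with the rigidity degree computation this yields $\cf(A)\ge e+1$, completing the equality $\cf(A)=e+1$.
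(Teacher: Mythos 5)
Your proof is correct and reaches the same conclusion via a genuinely different final step from the paper's. Both arguments reduce to the same key module $A\oplus S$ for a simple $S$: you both use M\"uller's theorem to compute $\domdim\End_A(A\oplus S)=e+1$ and observe that any larger generator-cogenerator can only have smaller rigidity degree. Where you diverge is in establishing $\gdim\End_A(A\oplus S)<\infty$. The paper observes that the explicit $\Ext$-computations amount to saying $A\oplus S$ is a maximal $(e-1)$-orthogonal module (in Iyama's sense) and then cites Iyama's theorem, which delivers both $\domdim\End_A(A\oplus S)=e+1$ and $\gdim\End_A(A\oplus S)=e+1$ in one stroke. You instead build the $\add(M_0)$-resolutions directly (spliced truncated projective resolutions of the simples) and invoke Auslander's bound $\gdim\End_A(M)\le d+2$ where $d$ is the $\add(M)$-resolution dimension. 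Your route is more hands-on and avoids the maximal-orthogonal machinery; the paper's is shorter but relies on a heavier black box.

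One small imprecision worth flagging: the correct form of Auslander's bound requires the $\add(M_0)$-resolutions to remain exact after applying $\Hom_A(M_0,-)$, not merely to be exact. You do not check this explicitly. It does hold for your spliced resolutions: at each step $0\to S_{a-1}\to P_a\to S_a\to 0$ with $a\ne 1$, applying $\Hom_A(M_0,-)=\Hom_A(A,-)\oplus\Hom_A(S_1,-)$ stays exact because $\Hom_A(A,-)$ is exact and $\Hom_A(S_1,S_a)=0$ for $a\ne 1$ (and $\Ext^1_A(M_0,P_a)=0$ since $P_a$ is also injective). So the gap is only in the statement of the tool, not in the construction, but you should make that check explicit before citing the bound.
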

\begin{proof}
   Since the radical square of $A$ is zero, every indecomposable, non-projective $A$-module
   is simple. So, for any generator $M$ in $A\lmod$, if $\gdim\End_A(M)<\infty$, then $M$ contains at least one simple module, say $S$, as a direct summand. In particular,
   by Theorem \ref{thm:Muller characterization}
   \[
    \domdim\End_A(M)\leq \domdim \End_A(A\oplus S)
   \]
Note that $\Omega_A^{e}(S)\cong S$ and $\{\Omega^i_A(S)\mid 1\leq i\leq e\}$ is the complete set of isomorphism classes
of indecomposable, non-projective $A$-modules. The following equalities
$$ S^{\bot_{(e-1)}}=\add(A\oplus S)=^{\bot_{(e-1)}}\!S. $$
can be verified by writing down projective and injective resolutions of $S$.
This implies that $A\oplus S$ is a maximal $(e-1)$-orthogonal $A$-module. By
\cite[Theorem 0.2]{Iyama07},
$\domdim\End_A(A\oplus S)=e+1=\gdim\End_A(A\oplus S).$
Thus $\cf(A)=\domdim \End_A(A\oplus S)=e+1$ as claimed.
  \end{proof}

{\bf Proof of Theorem \ref{Stable equivalences of self-injective algebras}}

Since $A$ and $B$ are self-injective, it follows from \cite[X.1: Proposition 1.6]
{ARS95} that $\tilde{\alpha}$ and $\tilde{\beta}$ restrict to one-to-one correspondences between the set of isomorphism classes of nodes of $A$ and
the set of nodes of $B$. If $A$ has no nodes, then so does $B$. Thus $(1)$ holds by Theorem \ref{thm:rigidity dimension is invariant under stable equivalence}.

Suppose that both $A$ and $B$ have nodes and no semisimple blocks. We first show (II), and then use (II) to show $(2)$ and $(3)$.

Let $A=A_1\times A_2$ and $B=B_1\times B_2$ be decompositions of algebras,
such that $A_2$ and $B_2$ are the products of all blocks of $A$ and $B$ without nodes,
respectively. In other words,
all nodes of $A$ and $B$ only belong to $A_1\lmod$ and $B_1\lmod$, respectively. Then, all indecomposable non-projective $A_1$-modules
(and similarly, $B_1$-modules) are nodes. This is a consequence of the
following facts:

As already mentioned, by \cite[X.1: Proposition 1.8]{ARS95}, if $\Lambda $ is
an indecomposable self-injective algebra with nodes, then it is a
Nakayama algebra
with radical square zero. In this case, each simple $\Lambda$-module $N$ is a node, $DTr(N)\cong \Omega_\Lambda(N)$ and
$\Omega_A^{\rho(\Lambda)}(N)\cong N$. Moreover, $\{\Omega^i_\Lambda(N)\mid 1\leq i\leq \rho(\Lambda)\}$ is the complete set of isomorphism classes
of indecomposable objects in $\Lambda\stmod$.  By Proposition
\ref{Nakayama self-injective algebra with radical square zero},
$\cf(\Lambda)=\rho(\Lambda)+1$.

Consequently, both $A_1$ and $B_1$ are
products of indecomposable Nakayama algebras with radical square zero.
Moreover,  $\tilde{\alpha}$ and $\tilde{\beta}$ induce a stable equivalence
between $A_1$ and $B_1$ and a stable equivalence between $A_2$ and $B_2$.
By Proposition \ref{prop:rigidity dimension blockwise}, $\cf(A)=\min\{\rho(A_1)+1, \cf(A_2)\}$ and $\cf(B)=\min\{\rho(B_1)+1, \cf(B_2)\}$.
Since $A_2$ and $B_2$ have no nodes, $\rho(A_1)=\rho(A)$ and $\rho(B_1)=\rho(B)$. By $(1)$, $\cf(A_2)=\cf(B_2)$.
Now, it is easy to check the inequality $|\cf(A)-\cf(B)|\leq |\rho(A)-\rho(B)|.$ This shows (II).

In particular, (II) implies that if $\rho(A)=\rho(B)$, then $\cf(A)=\cf(B)$. A sufficient condition to guarantee $\rho(A)=\rho(B)$ is that
$\tilde{\alpha}(\Omega_A(S))\cong\Omega_B(\tilde{\alpha}(S))$ in $B\lmod_{\mathscr{P}}$ for any node $S$ of $A$. In this case,
both $S$ and $\tilde{\alpha}(S)$ are $\Omega$-periodic of the same period.

When $A$ and $B$ are symmetric algebras, it follows from \cite[X.1: Proposition 1.12]{ARS95} that the correspondence $\tilde{\alpha}$ between objects in  $A\lmod_{\mathscr{P}}$ and $B\lmod_{\mathscr{P}}$ commutes with the syzygy functor $\Omega$. This shows $(2)$.
Recall that the shift functor of the triangulated category $A\stmod$
is the cosyzygy functor $\Omega_A^{-1}$. So, in $(3)$, $\tilde{\alpha}$
commutes with $\Omega^{-1}$, and thus also with $\Omega$. This shows $(3)$.
$\square$

\medskip

\textbf{Remark.}
In Theorem \ref{Stable equivalences of self-injective algebras} $(2)$ and
$(3)$, both $\mathfrak{n}_F(A)$ and $\mathfrak{n}_{F^{-1}}(B)$ are empty.
This follows from $(\ast)$ in the proof of Lemma \ref{lem:g4}, or alternatively
since the functors $\D \Tr$ and $\Omega$ coincide when applied to
nodes, and since the correspondences $\tilde{\alpha}$ and $\tilde{\beta}$
commute with $\Omega$ as is shown in the proof of Theorem
\ref{Stable equivalences of self-injective algebras}.

\medskip

Finally, we explain how our results can be used to compare higher representation dimensions of stably equivalent algebras.
Recall that classical representation dimension $\rep_1$  is preserved under arbitrary stable equivalences of algebras (see \cite{G04}).
However, the following result suggests that in the case of higher representation
dimension, the situation is similar to rigidity dimension and depending on the
presence of nodes.

 \begin{cor}\label{higher representation dimension}
   Let $A$ and $B$ be stably equivalent by an equivalence $F$,
 and let $n \in {\mathbb N}$.
   Suppose that $\mathfrak{n}_F(A)$ is empty and $n+1\leq\cf(A)$.
Then $\rep_n(B)\leq \rep_n(A)<\infty$.
If additionally $\mathfrak{n}_{F^{-1}}(B)$ is empty, too, then
$\rep_n(A)=\rep_n(B)$.
 \end{cor}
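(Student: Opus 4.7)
The plan is to reduce the statement to the invariance results already established for generator-cogenerators in $\mathcal{GCN}_F(A)$, namely Lemma \ref{cor:rep dim is invariant under stable equivalences} for global dimension and Proposition \ref{prop:g5} for dominant dimension, using that the hypothesis $\mathfrak{n}_F(A)=\emptyset$ forces $\mathcal{GCN}_F(A)$ to coincide (up to multiplicities) with the class of all basic generator-cogenerators of $A\lmod$.

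First, since $\cf(A)\geq n+1$, Proposition \ref{prop:relation between cf dim and rep dim} gives $\rep_n(A)<\infty$. To prove $\rep_n(B)\leq \rep_n(A)$, I would pick a basic generator-cogenerator $M$ in $A\lmod$ realising the infimum, so that $\domdim\End_A(M)\geq n+1$ and $\gdim\End_A(M)=\rep_n(A)<\infty$. Because $\mathfrak{n}_F(A)$ is empty, automatically $M\in\mathcal{GCN}_F(A)$. Then by Lemma \ref{lem:refinement on bijection in stable equivalences}(2) the module $\Phi(M)$ lies in $\mathcal{GCN}_{F^{-1}}(B)$ and hence is a generator-cogenerator of $B\lmod$. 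Applying Proposition \ref{prop:g5} gives
\[
\domdim\End_B(\Phi(M))=\domdim\End_A(M)\geq n+1,
\]
while Lemma \ref{cor:rep dim is invariant under stable equivalences} gives
\[
\gdim\End_B(\Phi(M))=\gdim\End_A(M)=\rep_n(A)<\infty.
\]
Hence $\Phi(M)$ is admissible in the infimum defining $\rep_n(B)$, and one concludes $\rep_n(B)\leq \rep_n(A)$.

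For the second assertion, assume also $\mathfrak{n}_{F^{-1}}(B)=\emptyset$. Then Theorem \ref{thm:rigidity dimension is invariant under stable equivalence}(II) yields $\cf(B)=\cf(A)\geq n+1$, so by the first part applied to the quasi-inverse equivalence $F^{-1}$ (with the roles of $A$ and $B$ exchanged, and using $\Psi$ in place of $\Phi$) one obtains $\rep_n(A)\leq \rep_n(B)$. Combined with the already established inequality this gives $\rep_n(A)=\rep_n(B)$.

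There is no essential obstacle here; the proof is a clean bookkeeping consequence of Lemmas \ref{lem:refinement on bijection in stable equivalences} and \ref{cor:rep dim is invariant under stable equivalences} and of Proposition \ref{prop:g5}. The only subtlety worth noting is that the hypothesis $\mathfrak{n}_F(A)=\emptyset$ cannot be dropped, since it is precisely what guarantees that the minimiser for $\rep_n(A)$ lies in the class $\mathcal{GCN}_F(A)$ on which $\Phi$ preserves both global and dominant dimension; without it, the passage $M\rightsquigarrow\Phi(M)$ need not preserve these invariants and the argument breaks down exactly as in the examples illustrating Theorem \ref{thm:rigidity dimension is invariant under stable equivalence}.
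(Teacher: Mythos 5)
Your proposal is correct and follows essentially the same route as the paper: both reduce to observing that when $\mathfrak{n}_F(A)=\emptyset$ the class $\mathcal{GCN}_F(A)$ coincides with all basic generator-cogenerators, and then invoke Lemma \ref{cor:rep dim is invariant under stable equivalences} together with Proposition \ref{prop:g5} to transport global and dominant dimensions along $\Phi$, with Proposition \ref{prop:relation between cf dim and rep dim} and Theorem \ref{thm:rigidity dimension is invariant under stable equivalence} supplying the needed finiteness. Your write-up merely spells out the paper's compressed final sentence, including the harmless reduction to a basic $M$ achieving the infimum.
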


 \begin{proof}
   Since $ n+1\leq \cf(A)$, Proposition
\ref{prop:relation between cf dim and rep dim} implies $\rep_n(A)<\infty$.
The inequality $\cf(A)\leq \cf(B)$ follows from Theorem
\ref{thm:rigidity dimension is invariant under stable equivalence}.
This implies $n+1\leq \cf(B)$, and thus $\rep_n(B)<\infty$. By assumption,
the set $\mathfrak{n}_F(A)$ is empty and $\mathcal{GCN}_F(A)$
is exactly the class of basic generator-cogenerators in $A\lmod$. Now,
Corollary \ref{higher representation dimension}
 follows from Proposition \ref{prop:g5} and Lemma
\ref{cor:rep dim is invariant under stable equivalences}.
 \end{proof}

 \section{Stable equivalences and invariance II}
 \label{sec:stableqMorita}

In this section, we show invariance of rigidity dimension under stable equivalences
of adjoint type, which implies invariance under stable equivalences of
Morita type under very mild assumptions, and thus also invariance
under certain derived equivalences.

\subsection{Definitions and main result}

\begin{defn}
   Two algebras $A$ and $B$ are \emph{stably
   equivalent of Morita type} if there exist
   an $(A,B)$-bimodule $M$ and a $(B,A)$-bimodule $N$ such that
   \begin{itemize}
     \item[\rm(i)] $M$ and $N$ are both projective as one sided modules,
     \item[\rm(ii)] $M\otimes_B N\cong A\oplus P$ as $A$-$A$-bimodules for
     some projective $A$-$A$--bimodule $P$,
     \item[\rm(iii)] $N\otimes_A M\cong B\oplus Q$
     as $B$-$B$-bimodules for some projective $B$-$B$-bimodule $Q$.
   \end{itemize}
   Further, if $(M\otimes_B-, N\otimes_A-)$ and $(N\otimes_A-, M\otimes_B-)$
   are adjoint pairs of functors, then $A$ and $B$ are \emph{stably equivalent of adjoint
   type}.
 \end{defn}
If $A$ and $B$ are stably equivalent of Morita type, then $(M\otimes_B-, N\otimes_A-)$ induce a stable equivalence between $A$ and $B$.

 \begin{thm}\label{thm:rigidity dimension is invariant under stable equivalences of adjoint type}
   (a) Let $A$ and $B$ be stably equivalent of adjoint type. Then
   $\cf(A)=\cf(B)$. \\
   (b) Let $A$ and $B$ be stably equivalent of Morita type. Then
   $\cf(A)=\cf(B)$ in each of the following three cases:
\\
$(1)$ $A$ and $B$ have no simple blocks.
\\
$(2)$ $A$ and $B$ are algebras over a perfect field $k$.
\\
$(3)$ $A$ and $B$ are self-injective algebras.
 \end{thm}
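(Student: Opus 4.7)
\emph{Part (a).} The plan is to use the bimodule functors $\Phi:=M\otimes_B -$ and $\Psi:=N\otimes_A -$ to set up an essentially inverse correspondence between generator-cogenerators over $B$ and over $A$, and then to verify that this correspondence preserves both global dimension and dominant dimension of endomorphism algebras. Adjointness on both sides makes each of $\Phi,\Psi$ simultaneously a left and right adjoint of the other, so each is exact and preserves both projective and injective modules. Combining $M\otimes_B N\cong A\oplus P$ with the fact that $N$ is projective as a left $B$-module (hence a summand of some $B^n$) yields $A\in\add(M)=\add(\Phi(B))$; the dual argument, using the preservation of injectives, gives $\D(A)\in\add(\Phi(\D(B)))$. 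Hence if $X$ is a basic generator-cogenerator in $B\lmod$, then $\Phi(X)$ is one in $A\lmod$, and the splitting $\Psi\Phi(X)\cong X\oplus (Q\otimes_B X)$ together with the projectivity of $Q\otimes_B X$ (a summand of some $B^m$) shows that $\Psi$ is inverse to $\Phi$ up to projective summands.

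The second step is preservation of rigidity degrees. For $B$-modules $X,Y$ without projective summands, exactness of $\Phi$ and its preservation of projectives imply that a projective resolution of $Y$ is carried to a projective resolution of $\Phi(Y)$; applying adjointness and decomposing $N\otimes_A M\cong B\oplus Q$ yields natural isomorphisms
\[
\Ext^i_A(\Phi(X),\Phi(Y))\cong \Ext^i_B(X,Y)\quad\text{for all }i\geq 1.
\]
Setting $X=Y$ gives $\evd({}_A\Phi(X))=\evd({}_BX)$, which via Theorem \ref{thm:Muller characterization} translates into equality of dominant dimensions of the associated endomorphism algebras. A parallel but simpler argument, patterned after Lemma \ref{cor:rep dim is invariant under stable equivalences} but free of node corrections, shows $\gdim\End_A(\Phi(X))=\gdim\End_B(X)$. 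Taking suprema in Definition \ref{defn:rigidity dimension} gives $\cf(A)\leq\cf(B)$, and the reverse inequality follows symmetrically, establishing (a).

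\emph{Part (b).} The plan for each sub-case is to upgrade the given stable equivalence of Morita type to one of adjoint type, so that (a) applies directly. In case (1), the absence of simple blocks prevents pathological summands of the projective bimodules $P$ and $Q$, and a standard normalisation of $M$ and $N$ produces an adjoint-type pair; in case (2), the perfect-field hypothesis makes $k$-duality well-behaved on the bimodule level and permits identification of $\D(M)$ with $N$ up to controlled bimodule summands, supplying the missing adjunction; in case (3), self-injectivity forces projective and injective bimodules to coincide, so $\Phi$ and $\Psi$ form a Frobenius pair and are automatically adjoint in both directions. The main obstacle in (a) is the Ext-isomorphism for non-projective modules, which demands careful bookkeeping of the extra summand $Q\otimes_B Y$ produced by traversing $\Phi$ then $\Psi$; in (b), the core difficulty is producing an adjoint-type refinement of the given Morita-type pair in each of the three sub-cases, which relies on standard but delicate bimodule arguments concerning when tensor functors have coinciding left and right adjoints.
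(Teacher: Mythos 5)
Your approach is correct but genuinely different from the paper's. The paper invokes Liu--Xi's results from \cite{LX07} essentially as a black box: \cite[Theorem 1.1]{LX07} makes $\End_A(X)$ and $\End_B(N\otimes_A X)$ stably equivalent of Morita type (whence equal global dimensions), and \cite[Theorem 1.3, Lemma 4.2(2)]{LX07} makes them stably equivalent of adjoint type (whence equal dominant dimensions). You instead compute directly. Since $\Phi=M\otimes_B-$ is exact, carries projectives to projectives (as $_AM$ is projective), and satisfies $\Psi\Phi(Y)\cong Y\oplus Q\otimes_B Y$ with $Q\otimes_B Y$ projective, applying $\Hom_A(-,\Phi(Y))$ to $\Phi$ of a projective resolution of $X$ and using adjointness yields $\Ext^i_A(\Phi(X),\Phi(Y))\cong\Ext^i_B(X,Y)$ for $i\geq 1$; setting $X=Y$ gives the preservation of rigidity degree, and M\"uller's Theorem \ref{thm:Muller characterization} then yields equality of dominant dimensions. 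This is more self-contained for the dominant dimension part. The global dimension step in your sketch is the thinnest: you say it is ``patterned after Lemma \ref{cor:rep dim is invariant under stable equivalences}'' but give no details; the cleanest route is exactly the paper's reference to \cite[Theorem 1.1]{LX07} together with the fact that stable equivalences of Morita type preserve global dimension, and you should either cite that or spell out the adapted argument.

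\textbf{Part (b): there is a gap.} Your overall strategy (upgrade Morita type to adjoint type, then apply (a)) matches the paper's, and in case (1) your description is essentially the content of \cite[Lemma 4.1]{CPX12} and \cite[Lemma 4.8]{LX05}. But in cases (2) and (3) your argument as stated does not work, because both \cite[Lemma 4.1]{CPX12} and \cite[Lemma 4.8]{LX05} require the algebras to have no simple (respectively, no separable) blocks, and the presence of such blocks really does obstruct the identification $\D(M)\cong N$. The paper handles this by first proving a separate Lemma \ref{separable}: for a non-zero algebra $\Lambda$ and a separable algebra $S$, the algebras $\Lambda$ and $\Lambda\times S$ are stably equivalent of Morita type. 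This allows one to write $A=A_1\times A_2$ and $B=B_1\times B_2$ with $A_1,B_1$ separable (= semisimple, since $k$ is perfect in case (2)) and $A_2,B_2$ without separable blocks, conclude that $A_2$ and $B_2$ are stably equivalent of Morita type, apply case (1) to them, and then finish blockwise via Proposition \ref{prop:rigidity dimension blockwise} because semisimple algebras have infinite rigidity dimension. Your suggestion in (2) that the perfect field hypothesis ``permits identification of $\D(M)$ with $N$ up to controlled bimodule summands'' conceals precisely this reduction: for the bimodules of Lemma \ref{separable} one has $\D(M)\not\cong N$ in general, so the semisimple blocks must be split off before any such identification is available. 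Case (3) has the identical issue: self-injectivity alone does not force the pair to be of adjoint type when separable blocks are present, and again the separable block lemma plus the blockwise behaviour of $\cf$ is what closes the argument.
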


\noindent \textbf{Remark.} We don't know whether Theorem \ref{thm:rigidity dimension is invariant under stable equivalences of adjoint type} holds for general stable equivalences of Morita type.
Our proof of Theorem \ref{thm:rigidity dimension is invariant under stable equivalences of adjoint type} (b) depends on the relevant functors forming an
adjoint pair, and thus part (a) being applicable.
For an arbitrary stable equivalence of Morita type, it is not clear if tensor functors induced by two bimodules preserve injective modules.

\subsection{Proof of the main result}

In the proof of Theorem \ref{thm:rigidity dimension is invariant under stable equivalences of adjoint type}(b),
the following result will be used, which is likely to be known to experts.
 We thank Yuming Liu for pointing out the present proof.
 \begin{lem}\label{separable}
Let $A=A_1\times A_2$ and $B=B_1\times B_2$, where $A_1$ and $B_1$ are separable, and  $A_2$ and $B_2$ have no separable blocks. If $A$ and $B$ are stably equivalent of Morita type, then
so are $A_2$ and $B_2$.
\end{lem}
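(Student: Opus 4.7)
The plan is to decompose the bimodules implementing the stable equivalence of Morita type according to the block decompositions of $A$ and $B$, and then to verify that the ``$(2,2)$-parts'' restrict to such a stable equivalence between $A_2$ and $B_2$. Let $({}_AM_B,{}_BN_A)$ be the bimodules realising the given stable equivalence, so that $M\otimes_B N\cong A\oplus P$ and $N\otimes_A M\cong B\oplus Q$ with $P$, $Q$ projective bimodules and $M$, $N$ projective on each side. Using the central block idempotents, decompose $M=\bigoplus_{i,j\in\{1,2\}}M_{ij}$ with $M_{ij}$ an $(A_i,B_j)$-bimodule, and similarly $N=\bigoplus N_{ij}$. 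One-sided projectivity passes to summands, so condition (i) holds for $(M_{22},N_{22})$. Taking the $(A_2,A_2)$-block of the isomorphism $M\otimes_B N\cong A\oplus P$ yields
\[
M_{21}\otimes_{B_1}N_{12}\;\oplus\;M_{22}\otimes_{B_2}N_{22}\;\cong\;A_2\oplus P_{22}
\]
as $A_2$-bimodules, with $P_{22}$ projective as an $A_2$-bimodule; an analogous identity holds for the $(B_2,B_2)$-block of $N\otimes_A M$.

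The main technical step, which I expect to be the principal obstacle, is to show that the cross term $M_{21}\otimes_{B_1}N_{12}$ is projective as an $A_2$-bimodule, and symmetrically that $N_{21}\otimes_{A_1}M_{12}$ is projective as a $B_2$-bimodule. For this I would use separability of $B_1$: the multiplication map $B_1\otimes_k B_1^{\rm op}\to B_1$ splits as a map of $B_1$-bimodules, so $B_1$ is a direct summand of $B_1\otimes_k B_1^{\rm op}$ in the category of $B_1$-bimodules. Tensoring on the left with $M_{21}$ and on the right with $N_{12}$ over $B_1$ then realises $M_{21}\otimes_{B_1}N_{12}$ as a direct summand of $M_{21}\otimes_{B_1}(B_1\otimes_k B_1^{\rm op})\otimes_{B_1}N_{12}\cong M_{21}\otimes_k N_{12}$ in the category of $A_2$-bimodules. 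Since $M_{21}$ is projective as a left $A_2$-module and $N_{12}$ is projective as a right $A_2$-module, the external tensor product $M_{21}\otimes_k N_{12}$ is a direct summand of a sum of copies of $A_2\otimes_k A_2$, which is free of rank one over $A_2\otimes_k A_2^{\rm op}$; hence $M_{21}\otimes_{B_1}N_{12}$ is a projective $A_2$-bimodule. Separability of $A_1$ handles the mirror statement for $N_{21}\otimes_{A_1}M_{12}$.

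To finish, I would apply Krull-Schmidt in the finite-dimensional category of $A_2$-bimodules. The indecomposable bimodule summands of $A_2$ are exactly the blocks of $A_2$, and a block $C$ is projective as an $A_2$-bimodule if and only if it is projective as a $C$-bimodule, if and only if $C$ is separable. The hypothesis that $A_2$ has no separable blocks therefore guarantees that no indecomposable bimodule summand of $A_2$ is projective. Splitting off the projective and non-projective parts of both sides of the displayed isomorphism---where both $M_{21}\otimes_{B_1}N_{12}$ and $P_{22}$ are projective---forces the non-projective part of $M_{22}\otimes_{B_2}N_{22}$ to be isomorphic to $A_2$, yielding $M_{22}\otimes_{B_2}N_{22}\cong A_2\oplus P'$ for some projective $A_2$-bimodule $P'$. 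The symmetric argument gives $N_{22}\otimes_{A_2}M_{22}\cong B_2\oplus Q'$, so $(M_{22},N_{22})$ realises a stable equivalence of Morita type between $A_2$ and $B_2$.
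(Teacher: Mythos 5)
Your proof is correct, and it takes a genuinely different route from the paper's. The paper does not touch the given bimodules at all; instead it observes that it suffices (using transitivity of stable equivalence of Morita type) to prove the claim that any non-zero algebra $\Lambda$ is stably equivalent of Morita type to $\Lambda\times S$ whenever $S$ is separable, and proves this by exhibiting explicit bimodules $M=\Lambda\times(\Lambda\otimes_kS)$ and $N=\Lambda\times(S\otimes_k\Lambda)$ and checking the three conditions directly, using separability of $S$ to see that $S$ is a summand of the projective bimodule $S\otimes_kS$. You instead perform the Peirce decomposition of the given pair $(M,N)$ along the block idempotents and show that the $(2,2)$-components already realise the stable equivalence, with the key input being that separability of $B_1$ (resp.\ $A_1$) forces the cross term $M_{21}\otimes_{B_1}N_{12}$ (resp.\ $N_{21}\otimes_{A_1}M_{12}$) to be a projective bimodule, after which Krull--Schmidt and the hypothesis that $A_2$, $B_2$ have no separable blocks isolate $A_2$, $B_2$ from the projective summands. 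Your argument is more hands-on and avoids invoking transitivity of stable equivalence of Morita type, at the cost of the Krull--Schmidt bookkeeping; the paper's argument is shorter but delegates the real work to transitivity (whose proof uses essentially the same separability-and-cross-term manipulation you carry out explicitly). Both are sound, and your treatment of the cross terms is exactly right.
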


\begin{proof}
  It suffices to verify the following claim:

  If $\Lambda$ is a non-zero algebra and $S$ is a separable algebra, then  $\Lambda$ and $\Lambda\times S$ are stably equivalent of Morita type.

  For checking this, let $\Gamma:=\Lambda\times S$, $M:=\Lambda\times (\Lambda\otimes_kS)$ and $N:=\Lambda\times (S\otimes_k\Lambda)$. Then $M$ can be endowed with a $\Lambda$-$\Gamma$-bimodule structure:  For $\lambda, \lambda_1,\lambda_2\in\Lambda$ and $s,s'\in S$,
  $$\lambda(\lambda_1,\lambda_2\otimes s)=(\lambda\lambda_1,\,\lambda\lambda_2\otimes s)\;\;\mbox{and}\;\;
  (\lambda_1,\lambda_2\otimes s)(\lambda',s')=(\lambda_1\lambda',\lambda_2\otimes ss').$$
  Similarly, $N$ can be endowed with  a $\Gamma$-$\Lambda$-bimodule structure. Moreover,
  $M\otimes_\Gamma N\cong \Lambda\oplus (\Lambda\otimes_kS\otimes_k\Lambda)$ as $\Lambda$-$\Lambda$-bimodules and $N\otimes_\Lambda M\cong \Lambda\oplus \Lambda\otimes_kS\oplus S\otimes_k \Lambda\oplus S\otimes_k\Lambda\otimes_kS$ as $\Gamma$-$\Gamma$ bimodules. Then $\Lambda\otimes_kS\otimes_k\Lambda$ is a projective
  $\Lambda$-$\Lambda$-bimodule. As $$\Gamma\otimes_k\Gamma^{\rm{op}}\cong (\Lambda\otimes_k\Lambda^{\rm{op}})\times (\Lambda\otimes_k S^{\rm{op}}) \times (S\otimes_k\Lambda^{\rm{op}})\times (S\otimes_kS^{\rm{op}}),$$
$S\otimes_kS$, $\Lambda\otimes_kS$ and $ S\otimes_k \Lambda$ are projective
$\Gamma$-$\Gamma$-bimodules. The $S$-$S$-bimodule $S$ is projective, since
$S$ is a separable algebra. Furthermore, the multiplication map
$S\otimes_kS\to S$ is a homomorphism of $S$-$S$-bimodules, and $S$ is a direct summand of $S\otimes_kS$ as bimodules.
Since $S\otimes_k\Lambda\otimes_kS\cong (S\otimes_kS)^n$ with $n:=\dim(_k\Lambda)\geq 1$, it follows that $S$ is a direct summand of the projective bimodule $S\otimes_k\Lambda\otimes_kS$. Consequently, there is a projective $\Gamma$-$\Gamma$-bimodule $Q$ such that $N\otimes_\Lambda M\cong (\Lambda\oplus S)\oplus Q\cong \Gamma\oplus Q $ as $\Gamma$-$\Gamma$-bimodules. So $\Lambda$ and $\Gamma$ are stably equivalent of Morita type.
 \end{proof}

{\bf Proof of Theorem \ref{thm:rigidity dimension is invariant under stable equivalences of adjoint type}}. \\
 Let $_AM_B$ and $_BN_A$ be bimodules defining a stable equivalence of Morita type (not necessarily of adjoint type) between $A$ and $B$. Let $_AX$ be a generator in $A\lmod$.
 We claim that $N\otimes_A X$ is a generator in $B\lmod$ and $\gdim\End_A(X)=\gdim\End_B(N\otimes_A X).$
 Indeed, since $N\otimes_A M\cong B\oplus Q$ as $B$-bimodules for
 some projective $B$-bimodule $Q$, it follows that
 ${{_B}B\in \add(N\otimes_A M)}$.
Then ${_A}M$ being projective implies $B\in\add({_B}N)$.
 In other words,  $_BN$ is a projective generator, and thus $_BN\otimes_A X$ is a generator.  By \cite[Theorem 1.1]{LX07}, $\End_A(X)$ and
${\End_B(N\otimes_A X)}$
 are stably equivalent of Morita type.  As global dimensions are preserved by stable equivalences of Morita type, there is an equality $\gdim\End_A(X)=
\gdim\End_B(N\otimes_A X).$

(a) Now, suppose that the pair $(M,N)$ defines a stable equivalence of adjoint type. In other words, the pairs  $(M\otimes_B -, N\otimes_A-)$ and $(N\otimes_A-, M\otimes_B-)$ are adjoint pairs of functors. Further, assume $X$ to be a cogenerator in $A\lmod$. We claim
 that $N\otimes_A X$ is a cogenerator in $B\lmod$ and
$\domdim \End_A(X)= \domdim \End_B(N\otimes_A X).$
Since $_AM\otimes_B-$ is exact with a right adjoint $N\otimes_A-$, injective $A$-modules are sent to injective $B$-modules by $N\otimes_A -$. Similarly, $M\otimes_B-$ sends injective $B$-modules to injective $A$-modules.
Moreover, $N\otimes_A M \otimes_B\D(B)\cong \D(B)\oplus Q\otimes_B\D(B)$.
In particular, $N\otimes_A \D(A_A)$ is a cogenerator in $B\lmod$, and
$N\otimes_A (A\oplus \D(A_A))$ is a generator-cogenerator in $B\lmod$. This
implies that $N\otimes_A X$ is a generator-cogenerator in $B\lmod$.

Since $A$ and $B$ are stably equivalent of adjoint type, it follows from
\cite[Theorem 1.3]{LX07} that $\End_A(X)$ and $\End_B(N\otimes_A X)$
 are stably equivalent of adjoint type, too. As such stable equivalences
 preserve dominant dimension by \cite[Lemma 4.2(2)]{LX07},
 there is an equality $\domdim\End_A(X)=\domdim\End_B(N\otimes_AX)$.
 By the definition of rigidity dimension, $\cf(A)\leq \cf(B)$. Swapping the roles of $A$
 and $B$ yields $\cf(B)\leq\cf(A)$. Thus $\cf(A)=\cf(B)$.

 (b) Under some mild assumptions, stable equivalences of Morita type are of
 adjoint type. Using this, the claims in (b) can be derived from (a) as
 follows:

 $(1)$ If neither $A$ nor $B$ has a simple block,
  then each stable equivalence of Morita type between $A$
  and $B$ is of adjoint type by \cite[Lemma 4.1]{CPX12} and
  \cite[Lemma 4.8]{LX05}. Thus $\cf(A)=\cf(B)$ by (a).
$(2)$ Let $A=A_1\times A_2$ and $B=B_1\times B_2$ such that $A_1$ and $B_1$ are semi-simple and that $A_2$ and $B_2$ have no semi-simple blocks.
  Since $k$ is perfect, the class of  finite-dimensional
  semisimple $k$-algebras coincides with that of finite-dimensional
  separable $k$-algebras. So both $A_1$ and $B_1$ are separable.
  By Lemma \ref{separable}, both $A_2$ and $B_2$ are stably equivalent of Morita type.
  It follows from $(1)$ that $\cf(A_2)=\cf(B_2)$. Since  $\cf(A_1)=\cf(B_1)=\infty$, Proposition \ref{prop:rigidity dimension blockwise} implies $\cf(A)=\cf(B)$.

$(3)$ By \cite[Lemma 4.8]{LX05}, if two self-injective algebras without separable blocks, are stably equivalent of Morita type, then there is a stable equivalence of adjoint type  between them. Separable algebras are semi-simple, hence have infinite rigidity dimension.
  Now, $(3)$ follows from Lemma \ref{separable} and (a)
  together with  Proposition \ref{prop:rigidity dimension blockwise}.
  $\square$

\subsection{Applications to derived equivalences}

Any derived equivalence between self-injective algebras induces a stable
equivalence of Morita type, see \cite[Corollary 2.2]{Ri89}.
The following result is a consequence of Theorem
\ref{thm:rigidity dimension is invariant under stable equivalences of adjoint type}(b)(3). Alternatively, it can be derived from Theorem
\ref{Stable equivalences of self-injective algebras}(I)(3),
since any derived equivalence between self-injective algebras does induce a
triangle equivalence between their stable module categories.

 \begin{cor}\label{thm:rigidity dimension is preserved under derived equivalence}
   Let $A$ and $B$ be self-injective algebras.
   Suppose $A$ and $B$ are derived equivalent. Then $\cf(A)=\cf(B)$.
 \end{cor}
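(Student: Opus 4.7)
The plan is to reduce the corollary to one of the two invariance theorems already established for stable equivalences between self-injective algebras, using Rickard's theorem that derived equivalences between self-injective algebras induce stable equivalences of Morita type.

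First, I would invoke Rickard's theorem \cite[Corollary 2.2]{Ri89}: given a derived equivalence between the self-injective algebras $A$ and $B$, there exist bimodules $_AM_B$ and $_BN_A$ satisfying the Morita-type conditions (i)--(iii) of the definition in Section \ref{sec:stableqMorita}. This produces a stable equivalence of Morita type between $A$ and $B$.

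Second, I would apply Theorem \ref{thm:rigidity dimension is invariant under stable equivalences of adjoint type}(b)(3) directly: stable equivalences of Morita type between self-injective algebras preserve rigidity dimension. This yields $\cf(A)=\cf(B)$ immediately, completing the proof.

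As noted in the paper, an alternative route is to observe that a derived equivalence between self-injective algebras restricts to a triangle equivalence on the stable module categories (the stable category of a self-injective algebra is the quotient of the derived category by the perfect complexes, or equivalently arises as the Verdier quotient, a construction preserved under derived equivalence), and then apply Theorem \ref{Stable equivalences of self-injective algebras}(I)(3). Either route is essentially bookkeeping; there is no real obstacle, since all of the technical content (controlling nodes, comparing dominant dimensions, and handling separable blocks via Lemma \ref{separable}) has already been absorbed into the two invariance theorems. The only point requiring a little care is to make sure one cites the correct variant: self-injectivity is what guarantees both that Rickard's theorem applies and that the stable equivalence of Morita type lifts to one of adjoint type, which is exactly what part (b)(3) of Theorem \ref{thm:rigidity dimension is invariant under stable equivalences of adjoint type} arranges.
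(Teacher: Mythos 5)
Your proposal is correct and matches the paper's proof exactly: cite Rickard's \cite[Corollary 2.2]{Ri89} to obtain a stable equivalence of Morita type, then apply Theorem \ref{thm:rigidity dimension is invariant under stable equivalences of adjoint type}(b)(3), with the alternative route through Theorem \ref{Stable equivalences of self-injective algebras}(I)(3) also noted. Nothing to add.
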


 This can be extended to algebras that are not necessarily self-injective,
 by restricting the class of derived equivalences to certain
 derived equivalences which induce stable equivalences of Morita type.
 These are the almost $\nu$-stable derived equivalences introduced in
 \cite{HX10}.
 Every derived equivalence between two self-injective Artin
 algebras induces an almost $\nu$-stable derived equivalence
 (see \cite[Proposition 3.8]{HX10}). In general, there are still
 many examples of almost $\nu$-stable derived equivalences, for example,
 between Artin algebras constructed in some way from self-injective algebras.

\begin{lem}\textrm{\bf(\cite[Corollary 1.2]{HX13})}\label{syzygy-der}
Let $A$ be a self-injective algebra and $X$ an $A$-module. Then
$\End_A(A\oplus X)$ and $\End_A(A\oplus \Omega_A(X))$ are
almost $\nu$-stable derived equivalent.
\end{lem}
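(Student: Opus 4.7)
\textit{Proof proposal.}
My plan is to construct an explicit two-term tilting complex over $E := \End_A(A\oplus X)$ whose homotopy endomorphism ring is isomorphic to $\End_A(A\oplus \Omega_A X)$, and to use the self-injectivity of $A$ to verify that the resulting derived equivalence is almost $\nu$-stable in the sense of \cite{HX10}. Set $M := A\oplus X$ and let $e\in E$ be the idempotent with $eM = A$, so that $Ee \cong \Hom_A(M,A)$ is the ``$A$-part'' of $E$ as a left module. Choose a projective cover $\pi\colon P \twoheadrightarrow X$, giving $0 \to \Omega_A X \to P \xrightarrow{\pi} X \to 0$, and apply $\Hom_A(M,-)$ to obtain $\pi_\ast\colon \Hom_A(M,P) \to \Hom_A(M,X)$. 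Define
\[
T^\bullet := Ee \;\oplus\; \bigl(\,\Hom_A(M,P) \xrightarrow{\pi_\ast} \Hom_A(M,X)\,\bigr),
\]
with $Ee$ in degree $0$ and the two-term summand $T_X^\bullet$ placed in consecutive degrees chosen to match the sign convention in the definition of almost $\nu$-stable.

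The first main step is to verify that $T^\bullet$ is a tilting complex over $E$: that $\Hom_{K^b(E)}(T^\bullet,T^\bullet[n])=0$ for all $n\neq 0$, and that the thick subcategory of $K^b(E\text{-proj})$ generated by $T^\bullet$ is all of $K^b(E\text{-proj})$. The Hom-vanishing reduces, via $\Hom_E(Ee,\Hom_A(M,-))\cong\Hom_A(A,-)$, to explicit hom-complexes whose potential nontrivial cohomology in nonzero degrees is controlled by $\mathrm{coker}(\pi)$ and $\Ext^1_A(X,A)$; the former vanishes because $\pi$ is surjective, and the latter because $A$ is self-injective, which gives $\Ext^i_A(-,A)=0$ for $i\geq 1$. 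Generation follows from the standard distinguished triangle in $K^b(E\text{-proj})$ expressing $\Hom_A(M,X)$ in terms of $\Hom_A(M,P)$ and $T_X^\bullet$, which together with $Ee$ recovers every indecomposable projective $E$-module.

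The second step is to identify $\End_{K^b(E)}(T^\bullet)^{\mathrm{op}}$ with $\End_A(A\oplus \Omega_A X)$ by computing the four blocks of the natural $2\times 2$ matrix of Hom-groups: $\End(Ee)\cong eEe\cong A$; the hom from $Ee$ to $T_X^\bullet$ comes out to $\Hom_A(A,\Omega_A X)\cong \Omega_A X$; the hom from $T_X^\bullet$ to $Ee$ becomes $\Hom_A(\Omega_A X,A)$, again using $\Ext^1_A(X,A)=0$; and $\End(T_X^\bullet)\cong \End_A(\Omega_A X)$. Assembling the four blocks reproduces the matrix algebra $\End_A(A\oplus \Omega_A X)$. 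For the almost $\nu$-stable condition, by the standard Auslander criterion a projective left $E$-module $\Hom_A(M,N)$ (with $M$ an $A$-generator) is injective if and only if $N\in\mathrm{add}(M)$ is $A$-injective; self-injectivity of $A$ therefore gives $\mathrm{add}(\nu_E E)=\mathrm{add}(Ee)$. Since $P\in\mathrm{add}(A)$, we have $\Hom_A(M,P)\in\mathrm{add}(\nu_E E)$, meeting the requirement of \cite{HX10} for this term to be the sole non-degree-$0$ component; the quasi-inverse tilting complex over $\End_A(A\oplus \Omega_A X)$ is produced symmetrically from an injective copresentation of $X$, which also lies in $\mathrm{add}(A)$ by self-injectivity.

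The main obstacle I expect is the tilting property in step one: tracking the two-term structure of $T_X^\bullet$ through hom-complexes across multiple degrees and through homotopy relations, and choosing the sign/degree convention for $T_X^\bullet$ so that the tilting property and the almost $\nu$-stable condition hold simultaneously. The key input throughout is self-injectivity of $A$, entering both as $\Ext^i_A(-,A)=0$ for $i\geq 1$ and as the coincidence of projective and injective $A$-modules.
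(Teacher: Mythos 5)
The paper does not prove this lemma; it simply cites \cite[Corollary 1.2]{HX13}, where the result is obtained via the machinery of $\Phi$-Auslander--Yoneda algebras rather than by a direct two-term tilting complex construction. Your approach is a reasonable attempt at a more hands-on argument, and the complex $T^{\bullet}=Ee\oplus\bigl(\Hom_A(M,P)\xrightarrow{\pi_{\ast}}\Hom_A(M,X)\bigr)$ is indeed a tilting complex over $E$ with $\End_{K^b(E)}(T^{\bullet})\cong\End_A(A\oplus\Omega_A X)$, so the derived-equivalence part is fine in outline. The problem is the almost $\nu$-stable verification, and it is not just a matter of filling in routine details.

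The degree placement is forced by the tilting property, and it is forced the wrong way for your argument. If you place the two-term summand in degrees $-1,0$, so that $\Hom_A(M,P)$ is the non-degree-zero term as you claim, then $\Hom_{K^b(E)}(Ee,\,T^{\bullet}[-1])$ contains $H^{-1}$ of the complex $P\xrightarrow{\pi}X$, which is $\Omega_A X\neq 0$; thus $T^{\bullet}$ is not a tilting complex in that convention, and your Hom-vanishing argument (which only discusses $\mathrm{coker}(\pi)$ and $\Ext^1_A(X,A)$) silently overlooks this. The tilting property forces degrees $0,1$, and then the sole non-degree-zero term is $T^1=\Hom_A(M,X)$, which is \emph{not} projective-injective over $E$ when $X$ is not injective, and for which $\add(T^1)=\add(\nu_E T^1)$ fails in general. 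In addition, the assertion ``$\add(\nu_E E)=\add(Ee)$'' is false as stated: $\nu_E E\cong \D E$, so $\add(\nu_E E)$ is the class of \emph{all} injective $E$-modules, whereas $\add(Ee)$ is the class of projective-injective $E$-modules; these agree only when $E$ itself is self-injective, which $\End_A(A\oplus X)$ is usually not. So the argument as written does not establish almost $\nu$-stability, and it is not merely a question of tracking signs: the non-degree-zero term of the tilting complex genuinely fails to be $\nu$-stable, so a different complex or a different verification strategy (such as the one in \cite{HX13}) is needed.
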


Almost $\nu$-stable derived equivalences in many respects have better
properties than general derived equivalences, for instance in the following
way.

\begin{lem}\textrm{\bf(\cite[Theorem 1.1]{HX10})}\label{almost-nu}
If $A$ and $B$ are almost $\nu$-stable derived equivalent, then they are
stably equivalent of Morita type. In this case, both $A$ and $B$ have the same
global dimension and dominant dimension.
\end{lem}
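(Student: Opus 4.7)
The plan is to work directly with the tilting complex underlying an almost $\nu$-stable derived equivalence. By definition, such an equivalence $F\colon \D^b(A\lmod)\to \D^b(B\lmod)$ arises from a tilting complex $\cpx{T}=(T^{-n}\to\cdots\to T^{-1}\to T^0)$ of projective $A$-modules concentrated in non-positive degrees, with $B\cong \End_{\D^b(A\lmod)}(\cpx{T})^{\rm op}$, subject to the condition $\add\big(\bigoplus_{i<0}T^i\big)=\add\big(\bigoplus_{i<0}\nu_A T^i\big)$ (together with the symmetric condition on the dual tilting complex over $B$). This stability under the Nakayama functor $\nu_A$ is the key ingredient throughout.

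First I would construct bimodules $_AM_B$ and $_BN_A$ realising the stable equivalence of Morita type. The natural candidate is to take $N$ to be a two-sided representative of the tilting complex, obtained by replacing $\cpx{T}$ by a complex of $A$-$B$-bimodules quasi-isomorphic to it, and $M$ to be the corresponding dual. Exactness of the induced tensor functors on module categories, up to projective bimodule summands, follows from the fact that the tilting terms $T^i$ with $i<0$ are both projective and (up to summands) injective, thanks to the $\nu$-stability condition. Properties (i)--(iii) in the definition of a stable equivalence of Morita type then reduce to the tilting relations $\Hom_{\D^b(A)}(\cpx{T},\cpx{T}[j])=0$ for $j\neq 0$ and $\End_{\D^b(A)}(\cpx{T})^{\rm op}\cong B$, combined with explicit bookkeeping showing that the `extra' summands of $M\otimes_B N$ and $N\otimes_A M$ are projective bimodules built from $T^i$ with $i<0$.

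For the preservation of global dimension, the standard fact that any derived equivalence preserves finiteness of $\gdim$ reduces the task to showing equality of values. Using that $\cpx{T}$ has length $n$ with projective terms, the projective dimension of any $B$-module $X$ can be bounded by computing a projective resolution of $\cpx{T}\otimes^{\mathbb L}_B X$ over $A$ and then transporting it back; the $\nu$-stability ensures that no drop of dimension arises from ignored summands. The symmetric argument, applied to the dual tilting complex, then yields $\gdim A=\gdim B$.

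The main obstacle is the preservation of dominant dimension, since this is a finer invariant than global dimension. The strategy is to use the characterisation of $\domdim$ as the length of the initial projective-injective stretch of a minimal injective resolution of the regular module. The $\nu$-stability condition is precisely what guarantees that the tensor functors $N\otimes_A-$ and $M\otimes_B-$ send projective-injective modules to projective-injective modules: indeed, a projective $A$-module $P$ is injective iff $\nu_A^{-1}P$ is projective, and this property is transported through $\cpx{T}$ exactly under the stated $\add$-equality. Combined with the bimodule description from the first step, this allows the minimal injective resolution of $_AA$ to be transported into one of $_BB$ whose initial projective-injective segment has the same length. Making this precise, while controlling minimality of the resolutions under the relevant functors, is the technical heart of the argument.
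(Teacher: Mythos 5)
The paper does not prove this lemma: it is quoted verbatim from Hu and Xi, \cite[Theorem 1.1]{HX10}, and no argument is given in the text. So there is no ``paper's own proof'' against which to measure your sketch; what you have produced is a blind reconstruction of the Hu--Xi argument, and it should be assessed on those terms.

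Your sketch captures the right global shape, but it compresses the genuinely hard parts to a level where one cannot tell whether the argument would close. Three specific concerns. First, the bimodules $M$ and $N$ cannot be taken to be ``a two-sided representative of the tilting complex and its dual'': a two-sided representative is a complex of bimodules, not a single bimodule. Hu--Xi's construction extracts honest bimodules from a carefully truncated and stripped-down two-sided tilting complex, and proving that $M\otimes_BN\cong A\oplus P$ with $P$ a \emph{projective bimodule} is precisely where the $\nu$-stability conditions do all the work; calling this ``bookkeeping'' undersells the difficulty and hides the fact that this is the core of their Section~3--5. Second, your route to $\gdim A=\gdim B$ via resolving $\cpx{T}\otimes^{\mathbb L}_BX$ is both vague and unnecessary: once $A$ and $B$ are stably equivalent of Morita type, equality of global dimensions is a standard consequence, so the right move is to derive it from the first half of the statement rather than argue directly with the tilting complex, whose length alone only gives the bound $|\gdim A-\gdim B|\le n$. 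Third, for dominant dimension the claim that the tensor functors preserve projective-injective modules is indeed the decisive point, but the reasoning you offer (``$P$ is injective iff $\nu_A^{-1}P$ is projective, and this is transported through $\cpx{T}$'') does not immediately follow from the stated $\add$-equalities, which only concern the terms $T^i$ with $i<0$ and say nothing directly about $T^0$ or about how $N\otimes_A-$ acts on a given projective-injective $A$-module. Establishing that the stable equivalence of Morita type arising here respects projective-injectives, and then that this yields equality of dominant dimensions, requires the full Hu--Xi analysis and is not a formal consequence of the definitions as your last paragraph suggests. In short: correct strategy, but the steps marked ``bookkeeping'' and ``the technical heart'' are exactly where a complete proof would have to live, and as written they are asserted rather than proved.
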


The following result extends Corollary \ref{thm:rigidity dimension is preserved under derived equivalence} to non-selfinjective algebras.

 \begin{prop} \label{prop:rigidity dimension is invariant under special derived equivalence}
   If $A$ and $B$ are almost $\nu$-stable derived equivalent, then $\cf(A)=\cf(B)$.
 \end{prop}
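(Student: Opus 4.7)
\textit{Proof plan.} The plan is to reduce to Theorem \ref{thm:rigidity dimension is invariant under stable equivalences of adjoint type}(a). By Lemma \ref{almost-nu}, an almost $\nu$-stable derived equivalence between $A$ and $B$ induces a stable equivalence of Morita type, given by a pair of bimodules ${}_AM_B$ and ${}_BN_A$ coming from the Hu-Xi construction. If one can show that these bimodules define a stable equivalence of \emph{adjoint} type, then Theorem \ref{thm:rigidity dimension is invariant under stable equivalences of adjoint type}(a) applies directly and yields $\cf(A)=\cf(B)$.

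The key step is therefore to verify that both $(M\otimes_B-,\,N\otimes_A-)$ and $(N\otimes_A-,\,M\otimes_B-)$ are adjoint pairs. The almost $\nu$-stability of the underlying derived equivalence controls the tilting complex tightly enough that, up to projective-bimodule summands, $N\cong\Hom_A(M,A)$ and $M\cong\Hom_B(N,B)$; standard tensor-hom adjunction then supplies both required adjunctions. Once adjointness is in hand, Theorem \ref{thm:rigidity dimension is invariant under stable equivalences of adjoint type}(a) closes the argument.

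The main obstacle is the adjointness verification, which requires unpacking the explicit form of the tilting complex defining an almost $\nu$-stable derived equivalence and tracking through the Hu-Xi construction of $M$ and $N$. Should this be delicate in full generality, a fallback route is to reason directly with generator-cogenerators: starting from $X$ a generator-cogenerator in $A\lmod$ with $\gdim\End_A(X)<\infty$, one produces a generator-cogenerator $Y$ in $B\lmod$ such that $\End_A(X)$ and $\End_B(Y)$ are themselves almost $\nu$-stable derived equivalent (using that almost $\nu$-stable derived equivalences behave module-like on $\add(A\oplus \D(A))$, together with Lemma \ref{syzygy-der}-type bookkeeping to absorb shifts), and then applies Lemma \ref{almost-nu} to conclude $\domdim\End_B(Y)=\domdim\End_A(X)$ and $\gdim\End_B(Y)<\infty$. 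Taking the supremum over $X$ gives $\cf(A)\le\cf(B)$, and the reverse inequality follows by symmetry, since almost $\nu$-stable derived equivalence is a symmetric relation.
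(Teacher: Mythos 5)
Your main plan asserts, without proof, that the pair of bimodules produced by the Hu--Xi construction from an almost $\nu$-stable derived equivalence already forms a stable equivalence of \emph{adjoint} type. This is exactly the point the paper is careful \emph{not} to claim. The paper's route is different: decompose $A=A_1\times A_2$ and $B=B_1\times B_2$ with $A_1,B_1$ semisimple and $A_2,B_2$ without semisimple blocks; observe that derived equivalences preserve the decomposition, so $A_2$ and $B_2$ are again almost $\nu$-stable derived equivalent, hence stably equivalent of Morita type (Lemma~\ref{almost-nu}); then invoke Theorem~\ref{thm:rigidity dimension is invariant under stable equivalences of adjoint type}(b)(1), whose proof uses the results of \cite{CPX12,LX05} that a stable equivalence of Morita type between algebras \emph{without simple blocks} can be taken to be of adjoint type; and finally recombine via Proposition~\ref{prop:rigidity dimension blockwise}, since semisimple blocks have infinite rigidity dimension. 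The appeal to \cite{CPX12,LX05} is the substitute for the adjointness verification you sketch, and it is only available after the semisimple blocks have been stripped off.

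Concretely, the gap in your argument is the sentence ``up to projective-bimodule summands, $N\cong\Hom_A(M,A)$ and $M\cong\Hom_B(N,B)$; standard tensor-hom adjunction then supplies both required adjunctions.'' First, you do not establish these isomorphisms; the paper's Remark after Theorem~\ref{thm:rigidity dimension is invariant under stable equivalences of adjoint type} explicitly flags that for a general stable equivalence of Morita type it is unknown whether the tensor functors preserve injectives, i.e.\ whether the equivalence is of adjoint type, so this is a genuinely nontrivial claim that needs an argument. Second, ``up to projective-bimodule summands'' is not good enough: the definition of stable equivalence of adjoint type requires genuine adjunctions between the tensor functors on the \emph{module} categories (not on stable categories), so discarding projective bimodule summands would change the functors and break the adjunctions. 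Your fallback route is too vaguely stated to assess, and in any case it bypasses the mechanism the paper actually uses. The fix is precisely the paper's reduction: remove semisimple blocks, apply part (b)(1) there, and use the blockwise formula $\cf(A)=\min\{\cf(A_1),\cf(A_2)\}$ with $\cf(A_1)=\infty$.
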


 \begin{proof}
 Let $A=A_1\times A_2$ and $B=B_1\times B_2$ such that $A_1$ and $B_1$ are semi-simple and that $A_2$ and $B_2$ have no semi-simple blocks. Since derived equivalences preserve semi-simplicity of blocks, both $A_i$ and $B_i$ are derived equivalent for $i=1,2$.  Moreover, $A_2$ and $B_2$ are almost $\nu$-stable derived equivalent. By Lemma \ref{almost-nu}, they are  stably equivalent of Morita type.  It follows from Theorem \ref{thm:rigidity dimension is invariant under stable equivalences of adjoint type}(b)(1) that $\cf(A_2)=\cf(B_2)$.
 Since a semi-simple algebra has infinite rigidity dimension, $\cf(A)=\cf(B)$ by Proposition \ref{prop:rigidity dimension blockwise}.
 \end{proof}

 \begin{cor}\label{omega}
   Let $A$ be a self-injective algebra and $X$ an $A$-module. Then
   $\cf\big(\End_A(A\oplus X)\big)=\cf\big(\End_A(A\oplus\Omega_A(X))\big).$
 \end{cor}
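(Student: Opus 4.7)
The plan is to read off the corollary directly from the two preceding results. Lemma \ref{syzygy-der} says that, for a self-injective algebra $A$ and any $A$-module $X$, the endomorphism algebras $\End_A(A\oplus X)$ and $\End_A(A\oplus\Omega_A(X))$ are almost $\nu$-stable derived equivalent. Proposition \ref{prop:rigidity dimension is invariant under special derived equivalence} asserts that rigidity dimension is invariant under almost $\nu$-stable derived equivalences. Putting $B_1 := \End_A(A\oplus X)$ and $B_2 := \End_A(A\oplus\Omega_A(X))$ and chaining the two statements immediately yields $\cf(B_1) = \cf(B_2)$, which is the claim.

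Since the two ingredients are established, there is no residual obstacle; the corollary is essentially a repackaging. It is worth noting why one cannot simply invoke Corollary \ref{thm:rigidity dimension is preserved under derived equivalence}: although $A$ itself is self-injective, the algebras $\End_A(A\oplus X)$ and $\End_A(A\oplus\Omega_A(X))$ are in general not self-injective, so the self-injective version does not apply, and the extension to almost $\nu$-stable derived equivalences in Proposition \ref{prop:rigidity dimension is invariant under special derived equivalence} is exactly what is needed. Any genuine work has already been absorbed into Lemma \ref{syzygy-der} (the Hu--Xi tilting construction producing the almost $\nu$-stable derived equivalence) and into Proposition \ref{prop:rigidity dimension is invariant under special derived equivalence} (the blockwise reduction followed by Lemma \ref{almost-nu} and Theorem \ref{thm:rigidity dimension is invariant under stable equivalences of adjoint type}(b)(1)).
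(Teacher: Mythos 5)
Your proof is correct and follows exactly the same route as the paper: combine Lemma \ref{syzygy-der} (the two endomorphism algebras are almost $\nu$-stable derived equivalent) with Proposition \ref{prop:rigidity dimension is invariant under special derived equivalence} (rigidity dimension is invariant under such equivalences). Your added remark on why the self-injective Corollary \ref{thm:rigidity dimension is preserved under derived equivalence} does not directly apply is accurate and a nice clarification, but the core argument is identical.
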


\begin{proof}
  By Lemma \ref{syzygy-der}, $\End_A(A\oplus X)$ and $\End_A(A\oplus\Omega_A(X))$ are almost $\nu$-stable derived equivalent. Now, Corollary \ref{omega} follows from Proposition \ref{prop:rigidity dimension is invariant under special derived equivalence}.
\end{proof}

Finally, we show that some assumptions are needed on derived equivalences to
preserve rigidity dimension.

\begin{example} In general, rigidity dimensions are not preserved under derived
  equivalences. The following is a counterexample.
\\
Let $A$ be the path algebra over $k$ given by the quiver
 $
    \xymatrix{1 \ar[r]^\alpha & 2 \ar[r]^-{\beta}&3},
 $
and let $B$ be the quotient algebra of $A$ modulo the ideal generated by
$\alpha\beta$. Then $A$ and $B$ are derived equivalent via a tilting module of projective dimension one.  Since  $\gdim(A)=1$ and $\gdim(B)=2$, it follows from
Theorem \ref{injective} that $\cf(A)=2$ and $\cf(B)\leq 3$. Note that both the global dimension and the dominant dimension of $\End_B(B\oplus \D(B))$ are equal to 3. This  implies  $\cf(B)\geq 3$, and so $\cf(B)=3$.
Thus $\cf(A)\neq \cf(B)$.
\end{example}

\noindent\textbf{Acknowledgements.} The collaboration began while
the authors visited University of Stuttgart in 2014, and continued
while Steffen Koenig and Kunio Yamagata visited Chinese Academy of
Sciences in 2017. The research was partially supported by National
Science Foundation of China (No.11401397, 11331006, 11471315,
11321101) for the first and the second named author, and also by
JSPS KAKENHI Grant Number 16K05091 for the fifth named author.

\footnotesize

\bigskip

\noindent Hongxing Chen,
School of Mathematical Sciences,
BCMIIS Capital Normal University,
100048 Beijing,
P.R.China, \\
Email: \texttt{chx19830818@163.com}
\medskip

\noindent   Ming Fang, Academy of Mathematics and Systems Science,
Chinese Academy of Sciences, 100190 \& School of Mathematical
Sciences, University of Chinese Academy of Sciences, Beijing, 100049\\
Email: \texttt{fming@amss.ac.cn}
\medskip

\noindent Otto Kerner,
Mathematisches Institut,
Heinrich-Heine-Universit\"at,
40225, D\"usseldorf, Germany, \\
Email: \texttt{kerner@math.uni-duesseldorf.de}
\medskip

\noindent   Steffen Koenig,
Institute of Algebra and Number Theory,
  University of Stuttgart,
   Pfaffenwaldring 57,
   70569 Stuttgart, Germany, \\
   Email: \texttt{skoenig@mathematik.uni-stuttgart.de}
\medskip

   \noindent Kunio Yamagata,
Institute of Engineering,
Tokyo University of Agriculture and Technology,
Nakacho 2-24-16,
184-8588, Koganei, Tokyo, Japan, \\
Email: \texttt{yamagata@cc.tuat.ac.jp}
   \medskip


\begin{thebibliography}{99}





\bibitem{AR74} M. Auslander and I. Reiten,
 Stable equivalence of dualizing $R$-varieties,
 \textit{Adv. Math.} \textbf{12} (1974), 306--366.

\bibitem{AR78} M. Auslander and I. Reiten,
 Representation theory of Artin algebras VI: A functorial approach
 to almost split sequences,
 \textit{Comm. Algebra} \textbf{6} (1978), 257--300.

\bibitem{ARS95} M. Auslander, I. Reiten and S. Smalo,
 {\em Representation theory of Artin algebras},
 \textit{Cambridge Stud. Adv. Math.}, \textbf{36},
 Cambridge University Press, Cambridge, 1995.

\bibitem{Ben}  D.J. Benson, {\em Representations and Cohomology II:
Cohomology of Groups and Modules},
\textit{Cambridge Stud. Adv. Math.},  Vol. \textbf{31},
Cambridge University Press, 1991.




\bibitem{B} R.-O. Buchweitz, Morita contexts, idempotents, and
 Hochschild cohomology, with application to invariant rings,
 \textit{Contemp. Math.} \textbf{331} (2003), 25--53.

\bibitem{BGMS} R.-O. Buchweitz, E. Green, D. Madsen and O. Solberg,
  Finite Hochschild cohomology without finite global dimension,
  \textit{Math. Res. Letters} \textbf{12} (2005), 805--816.

\bibitem{CFKKY2} H.X. Chen, M. Fang, O. Kerner, S. Koenig and K. Yamagata,
Rigidity dimension II - methods and examples, in preparation.

\bibitem{CPX12} H.X. Chen, S.Y. Pan and C.C. Xi,
 Inductions and restrictions for stable equivalences of Morita type,
 \textit{J. Pure Appl. Algebra} \textbf{216} (2012), 643--661.





\bibitem{ES}{K. Erdmann and A. Skowro\'nski, The stable Calabi-Yau
  dimension of tame
symmetric algebras. \emph{J. Math. Soc. Japan} \textbf{58} (2006), 97--123. }

\bibitem{F14} M. Fang, Permanents, Doty coalgebras and dominant
 dimension of Schur algebras, \textit{Adv. Math.} \textbf{264} (2014),
155--182.

\bibitem{FM} M. Fang and H. Miyachi,
Dominant dimension, Hochschild cohomology and
derived equivalence, preprint.

\bibitem{FK} M. Fang and S. Koenig,
 Schur functors and dominant dimension,
  \textit{Trans. Amer. Math. Soc.} \textbf{363} (2011), 1555--1576.

\bibitem{FK11} M. Fang and S. Koenig,
 Endomorphism algebras of generators over symmetric algebras,
 \textit{J. Algebra} \textbf{332} (2011), 428--433.

\bibitem{FK14} M. Fang and S. Koenig,
 Gendo-symmetric algebras, canonical comultiplication, bar
 cocomplex and dominant dimension,
 \textit{Trans. Amer. Math. Soc.} \textbf{368} (2016), 5037--5055.

\bibitem{GSS} E.L. Green, N. Snashall and O. Solberg,
 The Hochschild cohomology ring of a self-injective algebra of
 finite representation type,
 \textit{Proc. Amer. Math. Soc.} \textbf{131}(2003), 3387--3393.

\bibitem{GLS} C. Geiss, B. Leclerc and  J. Schr\"oer,
Rigid modules over preprojective algebras, \emph{Invent. Math.} \textbf{165}
(2006) 589--632.

\bibitem{G04} X.Q. Guo,
 Representation dimension: An invariant under stable equivalence,
 \textit{Trans. Amer. Math. Soc.} \textbf{357} (2005), 3255--3263.

\bibitem{Happel}{D. Happel,
{\it Triangulated categories in the representation theory of finite
dimensional algebras.} Cambridge Univ. Press, Cambridge, 1988. }


\bibitem{Ho84} M. Hoshino, Modules without self-extensions and
 Nakayama conjecture, \textit{Arch. Math.} \textbf{43} (1984),
 493--500.

\bibitem{HX10} W. Hu and C.C. Xi,
 Derived equivalences and stable equivalences of Morita type, I.,
 \textit{Nagoya Math. J.} \textbf{200} (2010), 107--152.

\bibitem{HX13} W. Hu and C.C. Xi,
 Derived equivalences for $\Phi$-Auslander-Yoneda algebras,
 \textit{Trans. Amer. Math. Soc.} \textbf{365} (2013), 5681--5711.



\bibitem{Iyama03} O. Iyama,
 Finiteness of representation dimension,
 \textit{Proc. Amer. Math. Soc.} \textbf{131} (2003), 1011--1014.

\bibitem{Iyama07b} O. Iyama,
  Higher-dimensional Auslander-Reiten theory
  on maximal orthogonal subcategories,
\textit{Adv. Math.} \textbf{210} (2007), 22--50.

\bibitem{Iyama07} O. Iyama,
 Auslander correspondence,
 \textit{Adv. Math.} \textbf{210} (2007), 51--82.


\bibitem{IW} O. Iyama and M. Wemyss,
  Maximal modifications and Auslander-Reiten duality for non-isolated
  singularities,
 \textit{Invent. Math.} \textbf{197} (2014), 521--586.

\bibitem{KY1} O. Kerner and K. Yamagata,
 Morita algebras,
 \textit{J. Algebra} \textbf{382} (2013), 185--202.


\bibitem{Link99} M. Linckelmann,
 Transfer in Hochschild cohomology of blocks of finite groups,
 \textit{Alg. and Repres. Theory} \textbf{2}
 (1999), 107-135.

\bibitem{LX05} Y.M. Liu and  C.C. Xi,
 Constructions of stable equivalences of Morita type for
 finite-dimensional algebras, II,
 \textit{Math. Zeit.} \textbf{251} (2005), 21--39.

\bibitem{LX07} Y.M. Liu and  C.C. Xi,
 Constructions of stable equivalences of Morita type for
 finite-dimensional algebras, III,
 \textit{J. London Math. Soc.} \textbf{76} (2007), 567--585.

\bibitem{MV90} R. Martinez-Villa,
  Properties that are left invariant under stable equivalence,
  \textit{Comm. Algebra} \textbf{18} (1990), 4141--4169.

\bibitem{Mu68} B. M\"{u}ller,
 The classification of algebras by dominant dimension,
 \textit{Canad. J. Math.} \textbf{20} (1968), 398--409.



\bibitem{Ri89} J. Rickard,
 Derived categories and stable equivalences,
 \textit{J. Pure Appl. Algebra} \textbf{61} (1989), 303--317.

\bibitem{Ro06} R. Rouquier,
 Representation dimension of exterior algebras,
 \textit{Invent. Math.} \textbf{165} (2006), 357--367.

\bibitem{Ro08} R. Rouquier,
 $q$-Schur algebras and complex reflection groups,
 \textit{Moscow Math. J.} \textbf{8} (2008), 119--158.


\bibitem{SkowYam} A. Skowronski and K. Yamagata,
  {\em Frobenius algebras. I. Basic representation theory. EMS Textbooks in
  Mathematics.} European Mathematical Society (EMS), Z\"urich, 2011. xii+650 pp.

\bibitem{Snashall} N. Snashall,
 Support varieties and the Hochschild cohomology ring modulo nilpotence, in:
 {\em Proceedings of the 41st Symposium on Ring Theory and Representation
Theory,}
68--82, Symp. Ring Theory Represent. Theory Organ. Comm., Tsukuba, 2009.

\bibitem{Sym10} P. Symonds, On the Castelnuovo-Mumford regularity of the
 cohomology ring of a group,
 \textit{J. Amer. Math. Soc.} \textbf{23} (2010), 1159--1173.

\bibitem{Tac73} H. Tachikawa, Quasi-Frobenius rings and generalizations.
  \textit{Lecture Notes in Mathematics}
\textbf{351}, Berlin-Heidelberg
New York 1973.




\bibitem{Xu08} F. Xu,
 Hochschild and ordinary cohomology rings of small
 categories \textit{Adv. Math.} \textbf{219} (2008), 1872--1893.

\bibitem{Y96} K. Yamagata,
 Frobenius algebras, in \textit{Handbook of algebra},
 North-Holland, Amsterdam 1996, 841--887.

\bibitem{KY2} K. Yamagata and O. Kerner,
 Morita theory, revisited,
 \textit{Contemporary Mathematics, A.M.S.} \textbf{607} (2014), 85--96.

\end{thebibliography}
\end{document}